\documentclass[a4paper,reqno,11pt]{amsart}

\usepackage[left=2.5cm,right=2.5cm,top=2.4cm,bottom=2.4cm]{geometry}
\usepackage[T1]{fontenc}
\usepackage{lmodern}
\usepackage{microtype}
\usepackage{amsmath,amssymb,amsthm,mathtools}
\usepackage{mathrsfs}
\usepackage{enumitem}
\usepackage{needspace}
\usepackage[hidelinks]{hyperref}
\hypersetup{
pdftitle={Mass Deformations and Sheets in Logahoric Higgs Moduli}, pdfauthor={Georgios Kydonakis and Lutian Zhao} }
\allowdisplaybreaks
\theoremstyle{plain}
\newtheorem{theorem}{Theorem}[section]
\newtheorem{proposition}[theorem]{Proposition}
\newtheorem{lemma}[theorem]{Lemma}
\newtheorem{corollary}[theorem]{Corollary}
\newtheorem*{theoremA}{Theorem A}
\newtheorem*{theoremB}{Theorem B}
\newtheorem*{theoremC}{Theorem C}

\theoremstyle{definition}
\newtheorem{definition}[theorem]{Definition}
\newtheorem{example}[theorem]{Example}
\newtheorem{notation}[theorem]{Notation}
\theoremstyle{remark}
\newtheorem{remark}[theorem]{Remark}

\makeatletter
\let\c@equation\c@theorem
\makeatother
\numberwithin{equation}{section}

\newcommand{\C}{\mathbb C}
\newcommand{\g}{\mathfrak g}
\newcommand{\GD}{{G_D}}
\newcommand{\gD}{{\mathfrak g_D}}
\newcommand{\lalg}{\mathfrak m}
\newcommand{\palg}{\mathfrak p}
\newcommand{\nalg}{\mathfrak n}
\newcommand{\talg}{\mathfrak t}
\newcommand{\zalg}{\mathfrak z}
\newcommand{\zM}{\mathfrak z}
\newcommand{\cO}{\mathcal O}
\newcommand{\cB}{\mathcal B}
\newcommand{\cBP}{\mathcal B_{P_M}}
\newcommand{\cF}{\mathcal F}
\newcommand{\cJ}{\mathcal J}
\newcommand{\cL}{\mathcal L}
\newcommand{\cP}{\mathcal P}
\newcommand{\cU}{\mathcal U}
\newcommand{\cV}{\mathcal V}
\newcommand{\cX}{\mathcal X}
\newcommand{\Res}{\operatorname{Res}}
\newcommand{\CoRes}{\operatorname{CoRes}}
\newcommand{\Lie}{\operatorname{Lie}}
\newcommand{\Ad}{\operatorname{Ad}}
\newcommand{\ad}{\operatorname{ad}}
\newcommand{\Ind}{\operatorname{Ind}}
\newcommand{\Spec}{\operatorname{Spec}}
\newcommand{\reg}{\mathrm{reg}}
\newcommand{\der}{\mathrm{der}}
\newcommand{\sslash}{\mathbin{/\mkern-6mu/}}

\title{Mass Deformations and Sheets in Logahoric Higgs Moduli}
\author{Georgios Kydonakis}
\address{Department of Mathematics, University of Patras, University Campus,
Patras 26504, Greece}
\email{gkydonakis@math.upatras.gr}
\author{Lutian Zhao}
\address{Kavli Institute for the Physics and Mathematics of the Universe (WPI),
The University of Tokyo, Kashiwa, Chiba 277-8583, Japan}
\email{lutian.zhao@ipmu.jp}
\date{September 2026}
\subjclass[2020]{14D20, 14H60, 14L30, 17B08, 53D17}
\keywords{mass deformation, parahoric Higgs torsor, Hamiltonian reduction, symplectic leaf, sheet,
Hitchin fibration, affine Springer fiber, Duistermaat--Heckman variation}

\usepackage{xcolor}

\begin{document}

\begin{abstract}
We use the framework of the logahoric Hitchin integrable system introduced in our earlier work and the sheets of the Lie algebra in order to describe the tame defects in a six dimensional $\mathcal{N}=(2,0)$ superconformal theory under mass deformations. We show that varying leaves of fixed dimension in the Hitchin system amounts to varying residue orbits of fixed dimension. Moreover, we verify the period and dimension conditions in mass-like deformations, as proposed by Balasubramanian--Distler, thus realizing the sheets of the Lie algebra on the Poisson moduli space of logahoric Higgs torsors.
\end{abstract}
\maketitle
\section{Introduction}\label{sec:introduction}

One of the remarkable properties of Hitchin systems is that they arise as the Coulomb branch of class $\mathcal{S}$ $4d$ $\mathcal{N}=2$ quantum field theories compactified on $\mathbb{R}^3 \times S^1$ \cite{Ga14}. A class $\mathcal{S}$ theory is specified by the data of a punctured Riemann surface, a compact simple Lie algebra of type $A$, $D$ or $E$ and certain constraints at the punctures of the surface called \emph{defects}. Such theories are obtained by compactifying the $6d$ $\mathcal{N}=(2,0)$ superconformal field theory on the Riemann surface with a partial holomorphic twist, and, indeed, class $\mathcal{S}$ theories can be uniformly described in terms of these $6d$ theories (see \cite{GaWi}). A class $\mathcal{S}$ theory corresponds to the $4d$ $\mathcal{N}=2$ theory obtained by compactifying the $6d$ theory, with codimension-2 defects inserted at the punctures. The codimension-2 defects of the theory are then exhibited as \emph{tame} or \emph{wild} singularities of the Hitchin system. 

Chacaltana, Distler and Tachikawa in \cite{CDT} studied the behavior of codimension-2 defects of $6d$ $\mathcal{N}=(2,0)$ theories via dimensional reductions. One first such reduction is by formulating the theory on a circle transverse to the defect, and this associates a nilpotent orbit $\mathcal{O}_N$ in the Lie algebra $\mathfrak{g}$, called the \emph{Nahm label}, to the defect. Another dimensional reduction is by compactification to five dimensions on a circle that the defect wraps, and so a nilpotent orbit $\mathcal{O}_H$ in the Langlands dual Lie algebra $\mathfrak{g}^{\vee}$, called the \emph{Hitchin label} can be associated to the defect \cite{GMN13}. In both cases, a five-dimensional gauge theory emerges: for group $G$ on the Nahm side and for group $G^{\vee}$ on the Hitchin side. The pair of nilpotent orbits $(\mathcal{O}_N, \mathcal{O}_H)$ is then describing the defect. 

The study of mass deformations of $T^{\rho}[\mathfrak{g}]$ theories already appears in \cite{CDT}. Balasubramanian and Distler consider in \cite{BD} a \emph{tamely ramified Hitchin system}, meaning a meromorphic Hitchin system in which the Higgs field has simple poles and the residues at these poles are fixed adjoint orbits. In their work, they  classify all mass-like deformations of this system and then identify those mass-like deformations that indeed correspond to actual mass deformations of $T^{\rho}[\mathfrak{g}]$ theories. In particular, they use \emph{sheets} of the
Hitchin Lie algebra to organize deformations of these local residue orbits.
For a complex Lie algebra  $\mathfrak{g}$ and for $U_d$ the union of adjoint orbits in $\mathfrak{g}$ of complex dimension $d$, then the irreducible components of $U_d$ are called \emph{sheets} of the Lie algebra. Sheets were first studied by Borho and Kraft \cite{Borho}, \cite{BK}; see also \cite[Section 4.1]{BD}. A sheet always contains exactly one nilpotent orbit but it may also contain non-nilpotent orbits. One calls a sheet \emph{Dixmier}, when it contains orbits that are semisimple. Sheets are parameterized by \emph{sheet labels}, that is, pairs of certain Levi subalgebras and nilpotent orbits in the Levi subalgebras. In \cite{BD}, the Kirillov--Kostant--Souriau (KKS) holomorphic symplectic form $\Omega_{\text{KKS}}$ on the symplectic leaves of the Hitchin system varies linearly with the hypermultiplet mass parameters. These mass-like deformations are labeled by sheet labels. 

\medskip 

The aim of this paper is to place the study of mass-deformations in the framework of the \emph{logahoric Hitchin integrable system} that was formally introduced in our previous work \cite{KZ}. This was established as an algebraically completely integrable Hamiltonian system over a smooth Riemann surface with a finite number of distinct punctures, for a general complex reductive Lie algebra, and involves a (tame) moduli space of parahoric torsors over the surface with Higgs fields having logarithmic singularities at the punctures. We use this new mathematical framework in order to describe the \emph{tame defects} of the $6d$ theory under mass deformations, considering at the same time the approach of Balasubramanian and Distler \cite{BD} who used the sheets of the Lie algebra to study these tame defects.
We realize how the symplectic geometry of the logahoric Hitchin system varies with the residues by constructing families through cotangent reduction and computing their symplectic periods.
A logarithmic Higgs field is locally a Lie algebra valued one-form $A(z)\,dz/z$, with residue $A(0)$. Changing the trivialization conjugates this residue, so its orbit is intrinsic, and these orbits determine the symplectic leaves in the Poisson geometry of the moduli space of logahoric Higgs torsors. A more detailed account of our main results is provided below.

\vspace{3mm}

Let $X$ be a smooth complex projective curve with one marked point, $G$ a connected complex semisimple group, and $\mathcal G_\theta$ a parahoric group scheme with reductive quotient $L_\theta$. Let $\cB$ denote a fixed nonempty connected component of the moduli space of regularly stable parahoric torsors $\mathcal G_\theta$ over $X$; the automorphism group of these torsors is the center $Z(G)$. A level structure can be seen as a choice of framing for the $L_\theta$-torsor at the marked point. Modulo these central automorphisms, levels therefore form a principal $\GD=L_\theta/Z(G)$-bundle $\cU\to\cB$, and $T^*\cU$ parametrizes Higgs fields with their levels.

Let now $\gD=\Lie(\GD)$ denote the Lie algebra of this level group $G_D$. An invariant nondegenerate form $B_{\gD}$ converts a residue $y\in\gD$ into its \emph{coresidue} $B_{\gD}(y,-)$. This coresidue then provides  a cotangent moment map $\mu:T^*\cU\to\gD^*$ (Theorem~\ref{rem:momentlocal}), as established in \cite{KZ}. Since the Poisson moduli space $\cP=T^*\cU/\GD$ forgets the level, fixing the residue orbit $\mathscr O$ gives $\cL_{\mathscr O}=\mu^{-1}(\mathscr O)/\GD$, with adjoint and coadjoint orbits identified by $B_{\gD}$ and we show that cotangent reduction provides precisely the leaves of the Poisson foliation and their dimensions:

\begin{theoremA}[Theorem~\ref{thm:allleaves}]
Every $\cL_{\mathscr O}$ is nonempty, smooth, and connected, of dimension $2\dim\cB+\dim\mathscr O$. These reductions are exactly the symplectic leaves of the Poisson space $\cP$.
\end{theoremA}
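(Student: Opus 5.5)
The plan is to treat $\cP=T^*\cU/\GD$ as a cotangent-bundle reduction, using the structure of $\cU\to\cB$ as a principal $\GD$-bundle. Over a trivializing open set $V\subset\cB$ with $\cU|_V\cong V\times\GD$, we have $T^*\cU|_V\cong T^*V\times T^*\GD$, and the cotangent moment map of Theorem~\ref{rem:momentlocal} is, after left trivialization $T^*\GD\cong\GD\times\gD^*$, just the projection to the $\gD^*$ factor, possibly composed with the coadjoint action. In particular $\mu$ is a submersion, since the $\GD$-action on $\cU$ is free. We identify $\gD\cong\gD^*$ via $B_{\gD}$, so coadjoint orbits correspond to residue orbits $\mathscr O$.

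Fix an orbit $\mathscr O$. Because $\mu$ is a submersion, $\mu^{-1}(\mathscr O)$ is a smooth submanifold of codimension $\dim\gD-\dim\mathscr O$. The group $\GD$ acts freely on it, because it already acts freely on $\cU$, and properly, because the action is a principal bundle action. Hence $\cL_{\mathscr O}$ is smooth of dimension
\[
2\dim\cU-(\dim\gD-\dim\mathscr O)-\dim\gD=2\dim\cB+\dim\mathscr O,
\]
using $\dim\cU=\dim\cB+\dim\gD$.

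Nonemptiness and connectedness come from the standard shifting-trick identification $\mu^{-1}(\mathscr O)/\GD\cong(T^*\cU\times\mathscr O^-)\sslash_0\GD$. More concretely, $\mu^{-1}(\mathscr O)/\GD$ is the associated bundle $T^*\cB$-twisted over $\cU\times_{\GD}\mathscr O\to\cB$. In the local model it is $T^*V\times(\GD\times\mathscr O)/\GD\cong T^*V\times\mathscr O$. Globally, $\cL_{\mathscr O}$ is an affine bundle modeled on $T^*\cB$ (pulled back) over the fiber bundle $\cU\times_{\GD}\mathscr O\to\cB$; this is the Sternberg--Weinstein universal phase space picture. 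It is nonempty since $\cB$ and $\mathscr O$ are. It is connected because $\cB$ is connected by hypothesis, the orbit $\mathscr O$ is connected (being the orbit of the connected group $\GD$, which is a quotient of the connected group $L_\theta$), and the affine fibers are connected.

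For the leaf statement, I would invoke the general Marsden--Weinstein/Kazhdan--Kostant--Sternberg theorem for free proper actions: the symplectic leaves of $M/G$ are the connected components of the reduced spaces $\mu^{-1}(\mathscr O)/G$, each carrying the reduced symplectic form. The leaves are then exactly the $\cL_{\mathscr O}$, by the connectedness just established. Checking this locally is straightforward: on $T^*V\times(T^*\GD/\GD)\cong T^*V\times\gD^*$ the Poisson structure is the product of the canonical structure on $T^*V$ and the Lie--Poisson structure on $\gD^*$, whose leaves are $T^*V\times\mathscr O$. These local leaves glue along the cocycle of $\cU$, because the transition functions act by the coadjoint action and therefore preserve orbits.

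I expect the main obstacle to be justifying that the global Poisson space $\cP$ really is this smooth free quotient. This requires that $\GD=L_\theta/Z(G)$ acts freely; that is exactly the regular stability hypothesis, since automorphisms are only $Z(G)$. It also requires identifying the Poisson structure from \cite{KZ} with the reduced one. I would handle this by citing the construction of $\cP$ in \cite{KZ} as the quotient $T^*\cU/\GD$ with its quotient Poisson structure, so that the general reduction theorem applies directly.
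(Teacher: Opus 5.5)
Your proposal is correct and follows essentially the paper's argument. The shared steps are submersivity of $\mu$ from the free action, the local charts $T^*V\times\mathscr O$ glued by coadjoint transitions, connectedness from the affine-bundle structure (the paper uses $\mu^{-1}(\mathscr O)\to\cU\times\mathscr O$ before quotienting, you use $\cU\times_{\GD}\mathscr O$ after), the identical dimension count, and leaves read off from the product of cotangent and Lie--Poisson structures. The only difference is that you cite the general free-action reduction theorem for the reduced symplectic form and the leaf statement, whereas the paper checks directly that $i^*\omega_{LH}+\mu^*\omega_{\mathscr O}$ is basic with kernel exactly the orbit directions.
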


Thus varying leaves of a fixed dimension amounts to varying residue orbits of a fixed dimension. Balasubramanian--Distler identify this dimension condition and linear symplectic periods in mass-like deformations of tame Hitchin systems \cite[Section~2.4.1]{BD}. A local example in $\mathfrak{sl}_2$ shows why retaining a parabolic reduction helps at the nilpotent limit. A residue preserving a chosen line has the form
\[
 y(a,b)=\begin{pmatrix}a&b\\0&-a\end{pmatrix}.
\]
For $a\ne0$, it is conjugate to $\operatorname{diag}(a,-a)$; for $a=0$ and $b\ne0$, it is nonzero nilpotent. We observe that both orbits have dimension two. Retaining the line includes $y=0$ in a smooth space: at $a=0$, compatible pairs satisfy $\operatorname{im}y\subset\ell\subset\ker y$, so the operators over $\ell$ form $\operatorname{Hom}(\C^2/\ell,\ell)=T^*_\ell\mathbb P^1$. The zero fiber is therefore $T^*\mathbb P^1$, whose complement of the zero section maps isomorphically to the nonzero nilpotent orbit.

In higher rank, the data consist of a parabolic $P_M=MN\subset\GD$, with Levi factor $M$ and unipotent radical $N$, its Levi algebra $\lalg=\Lie(M)$ and center $\zM=\zalg(\lalg)$, and a fixed nilpotent $M$-orbit $\cO\subset[\lalg,\lalg]$. A parameter $m\in\zM$ prescribes the Levi residue $m+e$, $e\in\cO$. When the centralizer of $m$ is $M$, the stabilizer of $m+e$ is the stabilizer of $e$ in $M$; hence its orbit dimension stays constant as $m$ varies. At zero, Lusztig--Spaltenstein induction \cite{LS} supplies an orbit of the same dimension: $\mathscr O_0=\Ind_{\lalg}^{\gD}(\cO)$ meets $\cO+\Lie(N)$ in an open dense subset. Retaining the parabolic reduction connects these orbits through the generalized Springer construction studied by Borho--MacPherson \cite{BorhoMacPherson1983} and Broer \cite{Broer}, and its deformations parametrized by the Levi center developed by Namikawa \cite{Namikawa2008,Namikawa2009,Namikawa2022CoversII}.

For Higgs moduli, the quotient spaces are $\cBP=\cU/P_M$ and $\cV=\cU/N$, so $\cV\to\cBP$ is an $M$-bundle. Since $\Lie(N)^\perp=\Lie(P_M)$ under $B_{\gD}$, reduction by $N$ at zero imposes the parabolic residue condition and gives $T^*\cV$. Its remaining moment map $\mu_M$ records the Levi residue. Note that $M$ acts trivially on $\zM$, so the central parameter survives the subsequent quotient by $M$. 

\Needspace{12\baselineskip}
\begin{theoremB}[Proposition~\ref{prop:smoothfamily} and Theorem~\ref{thm:simultaneous}]
The projection
\[
 q:\widetilde\cL=\mu_M^{-1}(\zM+\cO)/M\longrightarrow\zM
\]
is smooth and surjective, with symplectic fibers of dimension $2\dim\cB+\dim\gD-\dim\lalg+\dim\cO$. For $m\in\zM^{\reg}=\{m:Z_{\GD}(m)=M\}$, its fiber is symplectomorphic to $\cL_{\GD\cdot(m+e)}$, where $e\in\cO$. The open part of the zero fiber with residue in $\mathscr O_0$ is a finite \'etale symplectic cover of $\cL_{\mathscr O_0}$, of generalized Springer degree.
\end{theoremB}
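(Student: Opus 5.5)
The plan is to work entirely on $T^*\cV$ with its residual Hamiltonian $M$-action, and to reduce every claim to a statement about the moment map $\mu_M:T^*\cV\to\lalg^*\cong\lalg$. Here $\lalg^*$ is identified with $\lalg$ via $B_{\gD}$ restricted to $\lalg$, which is nondegenerate because $\lalg$ is a Levi subalgebra.

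\textbf{Step 1: freeness of the $M$-action.} The group $\GD=L_\theta/Z(G)$ acts freely on $\cU$, since the automorphisms of regularly stable torsors are exactly $Z(G)$. Hence $N$ acts freely on $\cU$, and $M$ acts freely on $\cV=\cU/N$. The cotangent-lifted action on $T^*\cV$ is then free, and $\mu_M$ is a submersion there.

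\textbf{Step 2: smoothness and dimension of $\widetilde\cL$.} The set $\zM+\cO$ is a smooth $M$-stable subvariety of $\lalg$. Its dimension is $\dim\zM+\dim\cO$, because $\zM$ is central and $\cO\subset[\lalg,\lalg]$. Since $\mu_M$ is a submersion, $\mu_M^{-1}(\zM+\cO)$ is smooth, and its quotient $\widetilde\cL$ by the free $M$-action is smooth. The map $q$ is well defined because $M$ fixes $\zM$ pointwise. Smoothness of $q$ is obtained by composing two maps: the submersion $\mu_M^{-1}(\zM+\cO)\to\zM+\cO$, and the projection $\zM+\cO\cong\zM\times\cO\to\zM$.

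The fiber $q^{-1}(m)=\mu_M^{-1}(m+\cO)/M$ is a symplectic reduction of $T^*\cV$ along a coadjoint orbit. Its dimension is
\[
2\dim\cV+\dim\cO-2\dim\lalg=2\dim\cB+2(\dim\gD-\dim\palg)+2\dim\nalg+\dim\cO-\dim\lalg .
\]
Using $\dim\palg=\dim\lalg+\dim\nalg$, this equals the stated $2\dim\cB+\dim\gD-\dim\lalg+\dim\cO$. Each fiber is symplectic by Marsden--Weinstein reduction along orbits.

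\textbf{Step 3: surjectivity.} For $m\in\zM^{\reg}$, surjectivity follows from Step 4 together with the nonemptiness in Theorem~\ref{thm:allleaves}. For general $m$, it follows because $\mu_M$ is a submersion with image open and $\mathbb C^*$-stable. Here I use the fiberwise scaling on $T^*\cV$, under which $\mu_M$ has weight one, and note that a $\mathbb C^*$-stable open set containing a point of $m+\cO$ contains that point's whole scaled ray. I would also invoke the surjectivity of the parabolic-residue map inherited from $\mu$ in \cite{KZ}.

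\textbf{Step 4: regular fibers, by reduction in stages.} Take $\palg=\Lie(P_M)$. Then $T^*\cU$ reduced by $N$ at zero is $T^*\cV$, because $\Lie(N)^\perp=\palg$ imposes $\mu\in\palg$, and the residual moment map is $\mu\mapsto\mu\bmod\nalg\in\lalg$. Reduction in stages therefore identifies $q^{-1}(m)$ with the quotient by $P_M$ of the pairs (Higgs field, residue) whose residue lies in $m+\cO+\nalg$.

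For $m\in\zM^{\reg}$, the standard lemma applies: $P_M$ acts on $m+e+\nalg$ transitively through $N$, which is the orbit $N\cdot(m+e)$, because $\ad(m+e)$ is invertible on $\nalg$ by the regularity of $m$. Consequently $\GD\times^{P_M}(m+\cO+\nalg)\to\GD\cdot(m+e)$ is an isomorphism, since the stabilizers agree. Combining this with $\mu^{-1}(\mathscr O)/\GD$ gives a symplectomorphism $q^{-1}(m)\cong\cL_{\GD\cdot(m+e)}$. The symplectic forms match because both arise as reductions of $T^*\cU$.

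\textbf{Step 5: the zero fiber, which is the main obstacle.} By Step 4, $q^{-1}(0)$ is the fiber product of $\mu^{-1}$ with the generalized Springer map $\pi:\GD\times^{P_M}(\overline{\cO}+\nalg)\to\gD$, quotiented by $\GD$. By Lusztig--Spaltenstein \cite{LS}, $\pi$ restricted over $\mathscr O_0$ is finite étale of degree $[Z_{\GD}(x):Z_{P_M}(x)]$ for $x\in\mathscr O_0\cap(\cO+\nalg)$, which is the generalized Springer degree of Borho--MacPherson. Base change along $\mu$ gives a finite étale map from the open part of $q^{-1}(0)$ to $\cL_{\mathscr O_0}$.

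The delicate points are the following. First, the open part of the zero fiber must lie inside the smooth locus $\mu_M^{-1}(\cO)$, which requires $\overline{\cO}$ to be replaced by $\cO$; this is fine because $\mathscr O_0\cap(\overline{\cO}+\nalg)\subset\cO+\nalg$ by the dimension count. Second, the pullback of the KKS-reduced form must be the reduced form upstairs. I would verify this by checking that $\pi$ pulls back the KKS form on $\mathscr O_0$ to the form on $\GD\times^{P_M}(\cO+\nalg)$ obtained by reduction of $T^*(\GD/N)$. That identity is classical (Borho--Kraft, Namikawa), and I would then transport it through the reduction in stages.
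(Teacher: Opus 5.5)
Your overall route is the paper's: reduction by $N$ at zero gives $T^*\cV$, each fiber is identified with $(\cX\times_{\gD^*}\widetilde\cJ_m)/\GD$, Lemma~\ref{lem:unipotentorbit} handles regular $m$, and the finite \'etale cover of Lemma~\ref{lem:inducedorbitcover} handles $m=0$. Your degree $[Z_{\GD}(x):Z_{P_M}(x)]$ and the dimension argument discarding $\overline\cO\setminus\cO$ also match the paper. The one step that fails as written is Step~3.

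\textbf{The gap in Step~3.} For $m\notin\zM^{\reg}$, and in particular for $m=0$, your argument does not produce a point of $\mu_M^{-1}(m+\cO)$. The points you know lie in the image of $\mu_M$ lie over $\zM^{\reg}+\cO$. Scaling moves $m'+e$ only to $tm'+te$ with $t\neq0$, which stays over $\zM^{\reg}$. An open $\C^*$-stable subset of $\lalg^*$ containing $\zM^{\reg}+\cO$ need not meet $\cO$, nor $m+\cO$ for $m$ on a root hyperplane. The appeal to KZ does not supply this either.

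The fix is already contained in your Step~1. On each fiber $T^*_v\cV$, $\mu_M$ is the transpose of the infinitesimal action $\lalg\to T_v\cV$, which is injective by freeness. Hence $(p,\mu_M):T^*\cV\to\cV\times\lalg^*$ is a surjective map of vector bundles (Lemma~\ref{lem:musubmersion}), so every $\mu_M^{-1}(m+\cO)$ is nonempty. This is also how the nonemptiness in Theorem~\ref{thm:allleaves} that you invoke is proved, so the detour through regular $m$ is unnecessary. The paper then uses the chart $\widetilde\cL|_W\cong T^*W\times\zM\times\cO$ of Proposition~\ref{prop:cotangentquotientcharts}, with $q(\beta,m,e)=m$, to get smoothness and surjectivity of $q$ at once. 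That chart also settles existence of the quotient, which a free action alone does not give.

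\textbf{Smaller points.} Your intermediate dimension expression is wrong, though the final value is right. The correct computation uses $\dim\cV=\dim\cB+\dim\gD-\dim\nalg$ and $\dim\gD=\dim\lalg+2\dim\nalg$.

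In Step~4, $P_M$ is not normal in $\GD$, so ``both arise as reductions of $T^*\cU$'' is not an instance of reduction in stages. What is needed is the identity $\omega_{\mathscr O_m}|_{S_m}=\operatorname{pr}_{\cO}^*\omega_{\cO}$ on the single $P_M$-orbit $S_m$ (Lemma~\ref{lem:parabolicforms}). You also need $i^*\lambda_{\cU}=r^*\lambda_{\cV}$ on $\mu_N^{-1}(0)$ (Lemma~\ref{lem:cotangentN}). The orbit identity follows in one line from $B_{\gD}(\palg,\nalg)=0$ and $B_{\gD}(m,[\lalg,\lalg])=0$. It holds verbatim on $S_0=(\cO+\nalg)\cap\mathscr O_0$, because $S_0$ is also a single $P_M$-orbit. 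So the same computation gives symplecticity of the zero-fiber cover, without a separate appeal to Borho--Kraft or Namikawa.
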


Here $Z_{\GD}(m)$ denotes the centralizer of $m$. For fixed $m\in\zM^{\reg}$, the residue determines the compatible parabolic reduction uniquely, whereas an induced nilpotent residue can admit several; the generalized Springer degree counts them over a general point of $\mathscr O_0$. The closure of $\cO$ includes limits of the nilpotent Levi residue. Local cotangent product charts give flatness and reduced fibers, and the projectivity of $\GD/P_M$ makes forgetting the reduction proper (Propositions~\ref{prop:orbitclosurefamily} and~\ref{prop:propercomparison}). 

Comparison of the symplectic classes requires a differentiable identification of the fibers. A connection on $\cV\to\cBP$ separates cotangent vectors into base and fiber directions; subtracting $m$ from the Levi residue then gives the common manifold
\[
 Y=T^*\cBP\times_{\cBP}(\cV\times_M\cO).
\]
The reduced forms on $Y$ are denoted by $\Omega_m$. Sternberg's coupling formula \cite{Sternberg1977} expresses the $m$-dependent term by pairing $m$ with the connection curvature; hence the classes vary affinely, as in Duistermaat--Heckman \cite{DH}. Nilpotent scaling makes $\Omega_0$ exact, so the constant term vanishes. Such periods also describe formal symplectic deformations under the cohomological hypotheses of Kaledin--Verbitsky \cite{KaledinVerbitsky2002}.

\begin{theoremC}[Theorem~\ref{thm:periodvariation}]
There is a connection-independent class $c\in H^2(Y,\C)\otimes\zM^*$, determined by the characteristic classes of the associated principal $M/[M,M]$-bundle, such that $[\Omega_m]=\langle m,c\rangle$.
\end{theoremC}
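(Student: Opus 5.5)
The plan is to realize every fiber of $q$ on the single manifold $Y=T^*\cBP\times_{\cBP}(\cV\times_M\cO)$ by a connection-dependent identification, and then to compute $\Omega_m$ with Sternberg's minimal coupling formula. Fix a principal connection $A$ on the $M$-bundle $\cV\to\cBP$, with curvature $F_A\in\Omega^2(\cBP,\operatorname{ad}\cV)$. Through $A$, a cotangent vector on $\cV$ splits into a horizontal part, which is a covector on $\cBP$, and a vertical part, which is the Levi residue $\mu_M\in\lalg^*\cong\lalg$. On $\mu_M^{-1}(m+\cO)$, subtracting $m$ gives a point of $\cV\times_M\cO$ lying over the base point. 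Because $M$ acts trivially on $\zM$, this shift is $M$-equivariant, so it descends to a diffeomorphism $\phi_{A,m}\colon q^{-1}(m)\to Y$. Sternberg's theorem \cite{Sternberg1977} then writes the transported reduced form as
\[
\Omega_m=\omega_{T^*\cBP}+\omega_{\cO}^{A}+d\langle \xi,\theta_A\rangle+\langle m+\xi,F_A\rangle .
\]
Here $\xi\in\cO$ is the fiber coordinate, $\omega^A_{\cO}$ is the fiberwise KKS form extended by the connection, and the first three terms do not depend on $m$. The $m$-dependent part is therefore the pairing $\langle m,F_A\rangle$. Since $m$ is central, this pairing only sees the projection of $F_A$ to $\zM\otimes\Omega^2$, which is the curvature of the induced connection on the $M/[M,M]$-bundle $\cV/[M,M]$ (whose Lie algebra is identified with $\zM$). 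It is a closed scalar-valued $2$-form pulled back from $\cBP$.

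The key steps, in order, are as follows.
\begin{enumerate}[label=(\roman*)]
\item Verify that $\phi_{A,m}$ is a diffeomorphism, using Proposition~\ref{prop:smoothfamily} for smoothness of the fibers.
\item Apply the coupling formula to get $\Omega_m=\Omega_0+\langle m,\pi^*F_A^{\zM}\rangle$, where $\pi\colon Y\to\cBP$.
\item Set $c=\pi^*[F_A^{\zM}]\in H^2(Y,\C)\otimes\zM^*$. By Chern--Weil, this class is the first Chern class data of the torus bundle $\cV/[M,M]$, up to the normalization $2\pi i$. Hence it is independent of $A$. More directly, two connections differ by a $\zM$-valued $1$-form $\beta$, so the curvatures differ by the exact form $d\beta$.
\item Check that the identifications for different $A$ differ by isotopies of $Y$ that act trivially on cohomology. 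This holds because the space of connections is affine, so all choices are connected.
\end{enumerate}
Together these give the affine formula $[\Omega_m]=[\Omega_0]+\langle m,c\rangle$.

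It remains to show $[\Omega_0]=0$. At $m=0$ the residue lies in the cone $\cO$. The $\C^*$-action that scales cotangent fibers and dilates $\cO$, by $t\cdot\xi=t\xi$ (this preserves a nilpotent orbit), acts on $\Omega_0$ with weight one. Let $E$ be its Euler vector field. Cartan's formula gives $\Omega_0=\mathcal L_E\Omega_0=d(\iota_E\Omega_0)$, so $\Omega_0$ is exact. Consequently $[\Omega_m]=\langle m,c\rangle$.

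The main obstacle is making the scaling argument compatible with $\phi_{A,0}$. Scaling is defined intrinsically on $q^{-1}(0)$, so one must check that the connection splitting commutes with it. It does, since the horizontal/vertical decomposition is linear in the cotangent fiber and $\theta_A$ carries no $\xi$-weight issue. A secondary difficulty is the precise Sternberg bookkeeping: confirming that the cross terms involving $\xi$ and $F_A$ are independent of $m$ after the shift. This step uses that $m$ is central, so the coadjoint action on $m+\xi$ only moves $\xi$.
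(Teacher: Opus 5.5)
Your proposal is correct and is essentially the paper's proof: you split covectors with a connection, shift the vertical coordinate by $m$ to identify every fiber with $Y$, and use minimal coupling to isolate the $m$-dependent term as the central part of the curvature, $\Omega_m-\Omega_0=-\pi_Y^*\langle B_{\gD}(m,-),F_A\rangle$ in the paper's sign convention $\omega=-d\lambda$; connection-independence comes from the affine space of connections, and $[\Omega_0]=0$ from the weight-one $\C^*$-scaling and Cartan's formula. Two small remarks: the connection should be chosen complex-linear, $A(Jw)=iA(w)$, so that the splitting is one of holomorphic covectors; and your concern about the scaling commuting with $\phi_{A,0}$ is unnecessary, since exactness of $\Omega_0$ on $\widetilde\cL_0$ is preserved under pullback by any diffeomorphism.
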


Therefore, we conclude that, together, Theorem~C and the equality of orbit dimensions verify the period and dimension conditions of Balasubramanian--Distler. Theorem~A also turns fixed orbit dimension into fixed Poisson rank. Consequently, \emph{sheets}, the irreducible components of loci of fixed orbit dimension, organize the corresponding rank loci in $\cP$. The sheet theory of Borho--Kraft \cite{BK}, Borho \cite{Borho}, and Broer \cite{BroerLectures} applies through the local cotangent product charts, which carry these irreducible components to one another (see Theorem~\ref{thm:sheetstrata}).

Finally, the parabolic reduction remains part of the data after fixing the Hitchin value. In Yun's global Springer theory, such additional structure supports Springer-type actions on fiber cohomology \cite{Yun2011}. For our family, the Hitchin map records the conjugacy invariants of the central residue. Retaining $m$ itself therefore requires adjoining it to the Hitchin target; Appendix~B constructs this target and establishes compatibility with the regular- and zero-fiber comparisons.

The paper is organized as follows. Section~\ref{sec:leviresidues} reviews nilpotent induction and studies residue orbits with a compatible parabolic reduction. Section~\ref{sec:sheets} relates these residue families to sheets and formulates the dimension and period conditions. In Section~\ref{sec:level-leaves} we describe level structures, the coresidue moment map, and the symplectic leaves of the logahoric quotient. In Section~\ref{sec:simultaneous} we construct the simultaneous parabolic reduction and compare its fibers with the residue-orbit reductions, while in Section~\ref{sec:periods} we compute the variation of the symplectic class. In the final Section~\ref{sec:examples}, we illustrate the construction with a few explicit examples in types~$A$, $D$, and $E$.

\section{Levi Residues and Induced Nilpotent Orbits}\label{sec:leviresidues}

\subsection{The Levi decomposition and the central parameter}

Consider a group $\GD$ which is connected, complex, and reductive, with Lie algebra $\gD$ and a fixed nondegenerate invariant symmetric form $B_{\gD}$. A closed subgroup $P_M\subset\GD$ is \emph{parabolic} if $\GD/P_M$ is projective. The Levi decomposition is $P_M=MN$, where $N$ is its largest connected normal unipotent subgroup and $M$ is a reductive subgroup mapping isomorphically to $P_M/N$. The subgroup $M$ is a \emph{Levi factor}. The corresponding Lie algebras are $\lalg=\Lie(M)$, $\palg=\Lie(P_M)$, and $\nalg=\Lie(N)$; the last is the nilradical of $\palg$. The Levi projection $\operatorname{pr}_{\lalg}:\palg\to\lalg$ gives the vector-space decompositions
\[
 \palg=\lalg\oplus\nalg,
 \qquad
 \lalg=\zM\oplus[\lalg,\lalg],
 \qquad
 \zM=\zalg(\lalg).
\]
Here $\zM$ is the center, $[\lalg,\lalg]$ the semisimple derived algebra, and $\nalg$ an ideal in $\palg$. We write $\Ad(g)$ for the differential of conjugation by $g$, $\ad(a)b=[a,b]$, and $\langle\Ad_g^*\xi,a\rangle=\langle\xi,\Ad_{g^{-1}}a\rangle$ for the coadjoint action.

In a faithful representation, Jordan decomposition gives the unique expression $x=x_s+x_n$ with $x_s$ diagonalizable, $x_n$ nilpotent, and $[x_s,x_n]=0$. These are its semisimple and nilpotent parts. The Lie algebra centralizer is $\mathfrak z_{\gD}(x)=\{a\in\gD:[a,x]=0\}$.

We fix a nilpotent $M$-orbit $\cO\subset[\lalg,\lalg]$. For $m\in\zM$, the $P_M$-invariant residue condition $\operatorname{pr}_{\lalg}(y)\in m+\cO$ is
\[
 y=m+e+n_y,\qquad e\in\cO,\quad n_y\in\nalg.
\]

The centralizer $Z_{\GD}(m)=\{g:\Ad(g)m=m\}$ defines the open subset
\[
 \zM^{\reg}:=\{m\in\zM:Z_{\GD}(m)=M\}.
\]
Relative to a maximal torus of $M$, this is the complement of the hyperplanes $\alpha(m)=0$ for roots outside $M$; the roots of $M$ vanish identically on $\zM$. Thus $0\notin\zM^{\reg}$ when $M\ne\GD$. The normalizer is $N_{\GD}(M)=\{g:gMg^{-1}=M\}$. Conjugate central parameters are identified by the finite group
\[
 N_{\GD}(M,\cO)
 =\{g\in N_{\GD}(M):\Ad(g)\cO=\cO\},
 \qquad Q_M=N_{\GD}(M,\cO)/M,
\]
acting on $\zM^{\reg}$. This group is finite because the identity component of $N_{\GD}(M)$ is $M$.

\begin{definition}[Jordan class]
The \emph{Jordan class} associated with $(\lalg,\cO)$ is the locally closed subset
\[
 J_{\GD}(\lalg,\cO):=\GD\cdot(\zM^{\reg}+\cO)\subset\gD.
\]
Its elements have commuting semisimple and nilpotent parts conjugate to $m\in\zM^{\reg}$ and $e\in\cO$, respectively. Thus a Jordan class fixes the semisimple centralizer and the nilpotent orbit inside it.
\end{definition}

\begin{proposition}[Centralizer and orbit dimension]\label{prop:centralizer}
For $m\in\zM^{\reg}$ and $e\in\cO$,
\[
 Z_{\GD}(m+e)=Z_M(e),
 \qquad
 \mathfrak z_{\gD}(m+e)=\mathfrak z_{\lalg}(e).
\]
Consequently,
\begin{equation}\label{eq:sheetorbitdimension}
 \dim\GD\cdot(m+e)
 =\dim\gD-\dim\lalg+\dim\cO.
\end{equation}
\end{proposition}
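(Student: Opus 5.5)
The plan is to use uniqueness of the Jordan decomposition: $m+e$ is already written in Jordan form, since $m\in\zM$ is semisimple (central in the reductive algebra $\lalg$), $e\in\cO\subset[\lalg,\lalg]$ is nilpotent, and $[m,e]=0$ because $m$ is central in $\lalg$. Hence $(m+e)_s=m$ and $(m+e)_n=e$. Since $\Ad(g)$ is a Lie algebra automorphism, it carries Jordan decompositions to Jordan decompositions. So $g\in Z_{\GD}(m+e)$ if and only if $\Ad(g)m=m$ and $\Ad(g)e=e$. This gives
\[
 Z_{\GD}(m+e)=Z_{\GD}(m)\cap Z_{\GD}(e)=M\cap Z_{\GD}(e)=Z_M(e),
\]
where the middle equality is the definition of $\zM^{\reg}$.

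For the Lie algebra statement I would argue infinitesimally in the same way. An element $a\in\gD$ commutes with $x=m+e$ if and only if it commutes with both $x_s$ and $x_n$. The reason is that $\ad(x)=\ad(x_s)+\ad(x_n)$ is the Jordan decomposition of $\ad(x)$ in $\operatorname{End}(\gD)$. Hence $\ker\ad(x)\subset\ker\ad(x_s)$ (the semisimple part is a polynomial in $\ad(x)$ without constant term), and likewise for $\ad(x_n)$. This gives $\mathfrak z_{\gD}(m+e)=\mathfrak z_{\gD}(m)\cap\mathfrak z_{\gD}(e)$.

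It then remains to show $\mathfrak z_{\gD}(m)=\lalg$. Differentiating $Z_{\GD}(m)=M$ only gives $\Lie(Z_{\GD}(m))=\lalg$, so I would argue via roots as in the description of $\zM^{\reg}$ above. Fix a maximal torus of $M$ with Cartan $\mathfrak h\supset\zM$. Then
\[
 \mathfrak z_{\gD}(m)=\mathfrak h\oplus\bigoplus_{\alpha(m)=0}\gD_\alpha .
\]
Since $\alpha(m)\neq0$ for every root outside $M$, only the root spaces of $\lalg$ survive, so $\mathfrak z_{\gD}(m)=\lalg$. Alternatively, one can use the standard fact that centralizers of semisimple elements in connected reductive groups are connected with Lie algebra $\mathfrak z_{\gD}(m)$. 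In either case, $\mathfrak z_{\gD}(m+e)=\lalg\cap\mathfrak z_{\gD}(e)=\mathfrak z_{\lalg}(e)$.

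For the dimension formula, use that orbit dimension equals $\dim\gD-\dim\mathfrak z_{\gD}(x)$, since the tangent space to the orbit at $x$ is $[\gD,x]$ and this has dimension $\dim\gD-\dim\ker\ad(x)$. This gives
\[
 \dim\GD\cdot(m+e)=\dim\gD-\dim\mathfrak z_{\lalg}(e).
\]
Applying the same identity inside $\lalg$ gives $\dim\cO=\dim M\cdot e=\dim\lalg-\dim\mathfrak z_{\lalg}(e)$. Substituting yields \eqref{eq:sheetorbitdimension}.

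The only delicate point is justifying $\mathfrak z_{\gD}(m)=\lalg$ from the group-level definition $Z_{\GD}(m)=M$; the root-space argument handles it directly. Everything else is uniqueness of the Jordan decomposition.
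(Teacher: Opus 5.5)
Your proposal is correct and follows the paper's proof. Uniqueness of the Jordan decomposition gives $Z_{\GD}(m+e)=Z_{\GD}(m)\cap Z_{\GD}(e)=Z_M(e)$, and the dimension formula then follows from $\dim\cO=\dim\lalg-\dim\mathfrak z_{\lalg}(e)$.

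The one difference is how you get the Lie algebra identity. The paper simply differentiates the group identity. This is legitimate over $\C$, since $\Lie Z_{\GD}(x)=\mathfrak z_{\gD}(x)$ in characteristic zero; it is the same fact you use when you identify the orbit tangent space with $[\gD,x]$. So your $\ad$-Jordan and root-space argument for $\mathfrak z_{\gD}(m)=\lalg$ is correct but not needed.
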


\begin{proof}
Uniqueness of Jordan decomposition gives
\[
 Z_{\GD}(m+e)=Z_{\GD}(m)\cap Z_{\GD}(e)=Z_M(e).
\]
Differentiating gives the Lie algebra identity. The dimension formula then follows from $\dim\cO=\dim\lalg-\dim\mathfrak z_{\lalg}(e)$.
\end{proof}

\begin{example}[A two-block residue in $\mathrm{SL}_4$]\label{ex:SL4running}
In this example, $\GD=\mathrm{PGL}_4$, represented by trace-zero matrices through its $\mathrm{SL}_4$ cover. The Levi $M$ is the image of $S(\mathrm{GL}_2\times\mathrm{GL}_2)$, and $\cO=0$. Then $\dim\gD=15$, $\dim\lalg=7$, and
\[
 m(a)=\begin{pmatrix}aI_2&0\\0&-aI_2\end{pmatrix},\qquad a\in\C,
\]
parametrizes $\zM$. For $a\ne0$, its two eigenspaces determine the centralizer $M$, giving $\zM^{\reg}=\C^*$ and orbit dimension $8$. Interchanging the blocks gives $Q_M\cong\mathfrak S_2$, acting by $a\mapsto-a$. The Jordan-class orbits are therefore parametrized by $\C^*/\{a\sim-a\}$; the first two-plane selects the $a$-eigenspace.
\end{example}

\subsection{Nilpotent induction and the zero parameter}

At $m=0$, the residue condition becomes $y=e+n_y$.

\begin{definition}[Nilpotent induction]\label{def:nilpotentinduction}
The \emph{induced orbit} $\Ind_{\lalg}^{\gD}(\cO)$ is the unique nilpotent $\GD$-orbit meeting $\cO+\nalg$ in an open dense subset. The Lusztig--Spaltenstein induction theorem establishes existence, uniqueness, and independence of the parabolic with Levi factor $M$ \cite{LS,CM}.
\end{definition}

For $\cO=0$, this is the \emph{Richardson orbit}: the $\GD$-orbit through the open dense $P_M$-orbit in $\nalg$.

\begin{proposition}[Dimension of the induced orbit]\label{prop:induceddimension}
For every nilpotent $M$-orbit $\cO\subset[\lalg,\lalg]$,
\begin{equation}\label{eq:induceddimension}
 \dim\Ind_{\lalg}^{\gD}(\cO)
 =\dim\gD-\dim\lalg+\dim\cO.
\end{equation}
This is also the dimension of each orbit in $J_{\GD}(\lalg,\cO)$.
\end{proposition}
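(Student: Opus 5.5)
We prove \eqref{eq:induceddimension} through the generalized Springer map, and get the final sentence directly from Proposition~\ref{prop:centralizer}. Put $\mathscr O_0=\Ind_{\lalg}^{\gD}(\cO)$ and $S=\overline{\cO}+\nalg\subset\palg$. This set is $P_M$-stable: $\nalg$ is an ideal of $\palg$, and $P_M$ acts on $\palg/\nalg\cong\lalg$ through $M$, which preserves $\overline{\cO}$. So we can form the associated bundle $\GD\times_{P_M}S$ and the map
\[
 \pi:\GD\times_{P_M}S\longrightarrow\gD,\qquad [g,y]\mapsto\Ad(g)y .
\]
The source is irreducible, of dimension $\dim\gD-\dim\palg+\dim\cO+\dim\nalg=\dim\gD-\dim\lalg+\dim\cO$, since $\dim\palg=\dim\lalg+\dim\nalg$. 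Its image is $\GD\cdot S$. Because $\GD/P_M$ is projective, this image is closed, so it is the closure of $\GD\cdot(\cO+\nalg)$. By Definition~\ref{def:nilpotentinduction}, $\mathscr O_0\cap(\cO+\nalg)$ is open dense in $\cO+\nalg$. Hence $\mathscr O_0$ is dense in $\GD\cdot S$, which gives $\overline{\mathscr O_0}=\pi(\GD\times_{P_M}S)$ and $\dim\mathscr O_0\le\dim\gD-\dim\lalg+\dim\cO$.

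For the reverse inequality we must show that $\pi$ is generically finite. This is the main obstacle. I would use the standard argument that for $x\in\mathscr O_0\cap(\cO+\nalg)$ the $P_M$-orbit of $x$ is open in $\cO+\nalg$. Equivalently, we want the tangent space $[\palg,x]+T$ to equal $T_x(\cO+\nalg)=[\lalg,e]+\nalg$, where $e=\operatorname{pr}_{\lalg}(x)$. The fibre of $\pi$ over $x$ is $\{gP_M:\Ad(g^{-1})x\in S\}$, and its dimension is bounded by $\dim(\mathscr O_0\cap S)-\dim(P_M\cdot x)$. So openness of $P_M\cdot x$ in $S$ would force finite fibres.

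I would prove the openness by the counting argument of Lusztig--Spaltenstein \cite{LS,CM}. One shows that $\dim\mathfrak z_{\gD}(x)\ge\dim\mathfrak z_{\lalg}(e)$ for $x\in e+\nalg$; this uses the duality $\nalg^\perp=\palg$ under $B_{\gD}$ together with the identity $\dim[\palg,x]^\perp\cap\palg$. One also shows $\dim P_M\cdot x\le\dim(\cO+\nalg)$. Combining these yields $\dim\mathscr O_0\ge\dim\gD-\dim\lalg+\dim\cO$. An alternative is to cite the dimension statement in \cite[Theorem~7.1.1]{CM} directly. Given that the paper attributes induction to \cite{LS,CM}, I would present this step as a citation together with the short fibre-dimension explanation above.

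For the last assertion, every orbit in $J_{\GD}(\lalg,\cO)$ has the form $\GD\cdot(m+e)$ with $m\in\zM^{\reg}$ and $e\in\cO$. By Proposition~\ref{prop:centralizer} and \eqref{eq:sheetorbitdimension}, its dimension is $\dim\gD-\dim\lalg+\dim\cO$, the same value as in \eqref{eq:induceddimension}.
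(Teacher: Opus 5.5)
Your upper bound is correct: $\GD\times^{P_M}(\overline\cO+\nalg)$ is irreducible of dimension $\dim\gD-\dim\lalg+\dim\cO$ and maps properly onto $\overline{\mathscr O_0}$. Your deduction of the last sentence from Proposition~\ref{prop:centralizer} is the same as the paper's.

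The gap is the reverse inequality. Reducing it to generic finiteness of your map $\pi$ is only a restatement, since $\dim\pi^{-1}(x)=\dim(\GD\times^{P_M}S)-\dim\mathscr O_0$ for $x\in\mathscr O_0$, and neither of your sketches proves that finiteness.

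First, openness of $P_M\cdot x$ in $\cO+\nalg$ does not force finite fibres. Counting via $g\mapsto\Ad(g^{-1})x$, with your $S=\overline\cO+\nalg$, gives
\[
 \dim\pi^{-1}(x)=\dim(S\cap\mathscr O_0)-\dim P_M\cdot x+\bigl(\dim Z_{\GD}(x)-\dim Z_{P_M}(x)\bigr).
\]
So your bound omits a nonnegative term. With an arbitrary orbit in place of $\mathscr O_0$ it fails, e.g.\ at $x=0$ for a Borel of $\mathrm{SL}_2$, where the fibre is $\mathbb P^1$ but your bound gives $0$. Finiteness needs in addition $\mathfrak z_{\gD}(x)\subset\palg$, and that is the real content of the statement.

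Second, your counting argument has its inequalities the wrong way round. Both $\dim\mathfrak z_{\gD}(x)\ge\dim\mathfrak z_{\lalg}(e)$ and $\dim P_M\cdot x\le\dim(\cO+\nalg)$ bound orbit dimensions from above. Together they only reproduce $\dim\mathscr O_0\le\dim\gD-\dim\lalg+\dim\cO$. The lower bound requires the opposite inequality $\dim\mathfrak z_{\gD}(x)\le\dim\mathfrak z_{\lalg}(e)$ for generic $x$.

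As written, then, your proof of \eqref{eq:induceddimension} rests entirely on the fallback citation of \cite[Theorem~7.1.1]{CM}. That citation is exactly the paper's proof: it quotes $\dim\Ind_{\lalg}^{\gD}(\cO)=\dim\cO+2\dim\nalg$ from \cite{LS} and uses $2\dim\nalg=\dim\gD-\dim\lalg$. With the citation in place, your upper-bound argument is redundant, and the fibre-dimension ``explanation'' should be dropped.

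If you want a self-contained lower bound, deform instead of counting fibres. Fix $m\in\zM^{\reg}$ and consider the proper map
\[
 \GD\times^{P_M}\{(t,y):y\in tm+\overline\cO+\nalg\}\longrightarrow\C\times\gD.
\]
Its image $Z$ is closed and irreducible, of dimension at most $\dim\gD-\dim\lalg+\dim\cO+1$. For every $t\ne0$, $Z$ contains $\{t\}\times\GD\cdot(tm+e)$, which has dimension $\dim\gD-\dim\lalg+\dim\cO$ by Proposition~\ref{prop:centralizer}. Hence $\dim Z=\dim\gD-\dim\lalg+\dim\cO+1$. The fibre of $Z$ over $0$ is $\{0\}\times\GD\cdot(\overline\cO+\nalg)=\{0\}\times\overline{\mathscr O_0}$. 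The fibre-dimension theorem for $Z\to\C$ then gives $\dim\overline{\mathscr O_0}\ge\dim\gD-\dim\lalg+\dim\cO$.
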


\begin{proof}
Induction gives $\dim\Ind_{\lalg}^{\gD}(\cO)=\dim\cO+2\dim\nalg$ \cite{LS}. Since the decomposition into $\lalg$ and the two opposite nilradicals gives $2\dim\nalg=\dim\gD-\dim\lalg$, this proves the dimension formula. Combining it with Proposition~\ref{prop:centralizer} gives the last assertion.
\end{proof}

\begin{notation}[Partitions and nilpotent orbits]
A partition $\lambda=[\lambda_1,\lambda_2,\ldots]$ of $N$ is a non-increasing sequence of positive integers summing to $N$; $[a^r,b^s]$ records multiplicities. Its transpose satisfies $\lambda_j^{\mathsf t}=\#\{i:\lambda_i\ge j\}$. In $\mathfrak{sl}_N$, $\mathcal O_\lambda$ has Jordan blocks of sizes $\lambda_i$ and
\begin{equation}\label{eq:typeAorbitdimension}
 \dim\mathcal O_\lambda=N^2-\sum_j(\lambda_j^{\mathsf t})^2.
\end{equation}
In $\mathfrak{so}_N$, every even part must have even multiplicity; a partition with all parts even gives two $\mathrm{SO}_N$-orbits of the same dimension. With $m_i(\lambda)$ the multiplicity of $i$,
\begin{equation}\label{eq:typeDorbitdimension}
 \dim\mathcal O_\lambda
 =\frac{N(N-1)}2-\frac12\left(
 \sum_j(\lambda_j^{\mathsf t})^2-\sum_{i\ \mathrm{odd}}m_i(\lambda)\right).
\end{equation}
For partitions of the same integer, dominance $\mu\leq\lambda$ means $\sum_{i=1}^k\mu_i\leq\sum_{i=1}^k\lambda_i$ for every $k$, with missing parts zero. The $D$-collapse of a partition of an even integer is the largest partition below it in dominance order having even multiplicity for every even part. For exceptional orbits, we use the Bala--Carter notation \cite[Chapters~5--8]{CM}. In $\mathfrak e_6$, $A_2$ denotes the orbit of a nilpotent element with one size-three Jordan block in an $\mathfrak{sl}_3$ Levi factor. The orbit $E_8(b_6)$ is specified by the induction formulas in Section~\ref{subsec:E8families}.
\end{notation}

In the two-block example,
\[
 y(B)=\begin{pmatrix}0&B\\0&0\end{pmatrix},
 \qquad \text{for }B\in\operatorname{Mat}_{2\times2}(\C),
\]
parametrizes $\nalg$. On the dense open subset $\det B\ne0$, we have $y(B)^2=0$ and $\operatorname{rank}y(B)=2$, thus giving
\[
 \Ind_{\lalg}^{\mathfrak{sl}_4}(0)=\mathcal O_{[2^2]},
 \qquad \dim\mathcal O_{[2^2]}=16-(2^2+2^2)=8.
\]

\subsection{Residues with a parabolic reduction}

For a left $P_M$-space $Y$, the associated bundle $\GD\times^{P_M}Y\to\GD/P_M$ is the quotient by
\[
 (h,y)\cdot p=(hp,p^{-1}\cdot y).
\]
For adjoint-stable $Y\subset\gD$, $[h;y]$ records the parabolic $hP_Mh^{-1}$ and residue $\Ad(h)y$. The same quotient convention defines $W\times_HY$ for a right $H$-torsor $W$ and a left $H$-space $Y$.

We retain both the residue and a parabolic reduction compatible with it. Together they form the family
\[
 \widetilde\cJ
 :=\GD\times^{P_M}
 \{(m,y)\in\zM\times\palg:
       \operatorname{pr}_{\lalg}(y)\in m+\cO\},
\]
where $P_M$ fixes $m$ and conjugates $y$. Its fiber and the morphism forgetting the parabolic reduction are
\[
 \widetilde\cJ_m
 =\GD\times^{P_M}(m+\cO+\nalg),
\]
\begin{equation}\label{eq:forgetparabolic}
 \nu:\widetilde\cJ\longrightarrow\zM\times\gD,
 \qquad [h;(m,y)]\longmapsto(m,\Ad(h)y).
\end{equation}
The second component on the fiber at $m$ is denoted by $\nu_m$; $B_{\gD}$ identifies its residue values with elements of $\gD^*$.

\begin{lemma}[Polynomial inversion along a filtration]\label{lem:filteredinverse}
A finite-dimensional vector space has a filtration $V=V_1\supset V_2\supset\cdots\supset V_{r+1}=0$ and a polynomial map $F:V\to V$ with $F(0)=0$. We assume that each $V_i/V_{i+1}$ has a linear automorphism $A_i$ satisfying, for $x\in V$ and $v\in V_i$,
\[
 F(x+v)-F(x)\equiv A_i(v)\pmod{V_{i+1}},
\]
where $A_i(v)$ denotes the image of the class of $v$. Then $F$ is a polynomial automorphism.
\end{lemma}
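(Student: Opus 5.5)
We show that $F$ is bijective with polynomial inverse by solving $F(x)=w$ one graded piece at a time. The key observation is that the hypothesis makes $F$ behave, modulo each $V_{i+1}$, like a triangular map whose diagonal entries are the automorphisms $A_i$. So we can correct an approximate solution by successive terms $v_i\in V_i$, each given by an explicit polynomial formula.

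Fix linear complements $W_i\subset V_i$ of $V_{i+1}$, so that $V=W_1\oplus\cdots\oplus W_r$. Identify $A_i$ with a linear automorphism of $W_i$ via $W_i\cong V_i/V_{i+1}$. Let $\pi_i:V_i\to W_i$ be the projection along $V_{i+1}$.

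\emph{Construction.} Given $w\in V$, we build $x_0=0$ and $x_i=x_{i-1}+v_i$ with $v_i\in W_i$, keeping the invariant $F(x_i)-w\in V_{i+1}$.

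\begin{itemize}
\item For $i=0$, the invariant $F(0)-w\in V_1=V$ holds trivially.
\item Suppose $F(x_{i-1})-w\in V_i$. Set
\[
 v_i:=-A_i^{-1}\pi_i\bigl(F(x_{i-1})-w\bigr).
\]
\item The hypothesis then gives
\[
 F(x_{i-1}+v_i)-w\equiv F(x_{i-1})-w+A_i(v_i)\equiv 0\pmod{V_{i+1}}.
\]
\end{itemize}

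Each $v_i$ is a polynomial in $w$, since it is built from $F$, linear maps, and the earlier $v_j$. Hence $G(w):=x_r$ is a polynomial map with $F(G(w))-w\in V_{r+1}=0$. So $F\circ G=\mathrm{id}$, and in particular $F$ is surjective.

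\emph{Injectivity.} Suppose $F(x)=F(x')$. We show $x-x'\in V_i$ for all $i$ by induction, starting from $x-x'\in V_1=V$. If $x-x'=v\in V_i$, the hypothesis gives
\[
 0=F(x+ (x'-x))-F(x)\equiv -A_i(v)\pmod{V_{i+1}},
\]
so $A_i(v)\equiv 0$ modulo $V_{i+1}$. Since $A_i$ is an automorphism of $V_i/V_{i+1}$, this forces $v\in V_{i+1}$. After $r$ steps, $x=x'$.

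Therefore $F$ is bijective and $G=F^{-1}$ is polynomial, so $F$ is a polynomial automorphism. Alternatively, $F\circ G=\mathrm{id}$ with $F$ injective gives $G\circ F=\mathrm{id}$ directly.

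The only delicate point is the bookkeeping: the congruence must be applied with base point $x_{i-1}$ and increment $v_i\in V_i$. The hypothesis is stated for arbitrary $x\in V$, so this is legitimate, and no genuine obstacle arises. The assumption $F(0)=0$ is not needed for the argument.
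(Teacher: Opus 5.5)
Your argument is correct and is essentially the paper's proof: successive corrections $v_i=-A_i^{-1}\pi_i\bigl(F(x_{i-1})-w\bigr)$ in adapted linear complements, each polynomial in $w$, assemble into the inverse. The differences are cosmetic---you prove injectivity separately where the paper folds it into the word \emph{uniquely}, and you rightly observe that $F(0)=0$ is inessential (the paper uses it only to read off the first component as $A_1^{-1}$ of $y$ modulo $V_2$).
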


\begin{proof}
Linear splittings give coordinates adapted to the filtration. In the equation $F(x)=y$, the component of $x$ modulo $V_2$ is determined by $A_1^{-1}$. If $x_{<i}$ is the sum of the first $i-1$ chosen components, the error $y-F(x_{<i})$ lies in $V_i$. Its class in $V_i/V_{i+1}$ determines the next correction uniquely through
\[
 A_i(v_i)\equiv y-F(x_{<i})\pmod{V_{i+1}}.
\]
Each correction is polynomial in $y$, being a fixed linear inverse applied to a polynomial in the preceding components. They therefore give the polynomial inverse.
\end{proof}

\begin{lemma}[Removal of the nilradical component]\label{lem:unipotentorbit}
For $m\in\zM^{\reg}$ and $e\in\cO$, the map
\[
 N\longrightarrow m+e+\nalg,
 \qquad n\longmapsto\Ad(n)(m+e),
\]
is an isomorphism. Consequently, $P_M\cdot(m+e)=m+\cO+\nalg$.
\end{lemma}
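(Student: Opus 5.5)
The plan is to apply Lemma~\ref{lem:filteredinverse} to the map $F:\nalg\to\nalg$ given by
\[
F(x)=\Ad(\exp x)(m+e)-(m+e).
\]
Here $\exp:\nalg\to N$ is an isomorphism of varieties, because $N$ is unipotent. Two facts make this well defined.
\begin{itemize}
\item Since $\nalg$ is an ideal of $\palg$ and $m+e\in\lalg\subset\palg$, the element $\Ad(\exp x)(m+e)=e^{\ad x}(m+e)$ equals $m+e$ plus a finite sum of iterated brackets lying in $\nalg$.
\item Nilpotency of $\ad x$ makes $F$ polynomial, and clearly $F(0)=0$.
\end{itemize}

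For the filtration I take the lower central series, or better a grading. Choose $h\in\zM$ with $\alpha(h)>0$ for every root $\alpha$ of $\nalg$; this is possible because $\nalg$ is the nilradical of a parabolic with Levi $\lalg$. Then set $V_i=\bigoplus_{k\ge i}\nalg_k$, where $\nalg_k$ is the $k$-th eigenspace of $\ad h$ (or, more simply, the $k$-th term of the descending filtration by "height"). These $V_i$ are ideals of $\palg$, they satisfy $[V_i,V_j]\subset V_{i+j}$, and they are stable under $\ad(\lalg)$ because $\lalg$ commutes with $h$.

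Now compute $F(x+v)-F(x)$ for $v\in V_i$. Using the Baker--Campbell--Hausdorff formula, write $\exp(x+v)=\exp(v')\exp(x)$ with $v'\equiv v \pmod{V_{i+1}}$. This holds because every correction term is a bracket involving $x\in V_1$ and therefore lands in $V_{i+1}$. Then
\[
\Ad(\exp v')\,y'-y' \equiv [v,y'] \equiv [v,m+e] \pmod{V_{i+1}},
\qquad y'=\Ad(\exp x)(m+e)\in m+e+V_1 .
\]
So $A_i=-\ad(m+e)$ acting on $V_i/V_{i+1}\cong\nalg_i$. This map is $\lalg$-equivariantly well defined. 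On $\nalg_i$, $\ad m$ acts semisimply with all eigenvalues $\alpha(m)\neq0$, since $m\in\zM^{\reg}$ and every root of $\nalg$ lies outside $M$. Also $\ad e$ is nilpotent and commutes with $\ad m$ (as $[m,e]=0$), and it preserves $\nalg_i$. Hence $\ad(m+e)|_{\nalg_i}$ is invertible.

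Lemma~\ref{lem:filteredinverse} then shows that $F$ is a polynomial automorphism of $\nalg$. Composing with $\exp$ gives the isomorphism $N\to m+e+\nalg$.

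For the consequence, write $P_M=MN$ and use that $M$ fixes $m$:
\[
P_M\cdot(m+e)=N\cdot(m+M\cdot e)=\bigcup_{e'\in\cO}N\cdot(m+e').
\]
By the isomorphism just proved, applied to each $e'\in\cO$, we have $N\cdot(m+e')=m+e'+\nalg$. Taking the union gives $m+\cO+\nalg$; here I also use that $\nalg$ is $M$-stable.

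The main point needing care is the congruence $F(x+v)-F(x)\equiv[v,m+e]$, in particular choosing a filtration that is compatible both with brackets and with $\ad(\lalg)$. The grading by a generic element of $\zM$ handles both.
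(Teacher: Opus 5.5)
Your proposal is correct and follows essentially the paper's proof. It uses the same map $\Ad(\exp x)(m+e)-(m+e)$, the same application of Lemma~\ref{lem:filteredinverse} with graded maps $-\ad(m+e)$, and the same filtration (your $\ad h$-grading, with $h\in\zM$ taking the value one on the omitted simple roots so that the eigenvalues are integers, is exactly the paper's filtration by height on those roots); the only difference is that you get the congruence via Baker--Campbell--Hausdorff, whereas the paper reads it off directly from the finite series $e^{\ad(x+v)}(m+e)-e^{\ad x}(m+e)$, every higher term of which contains $v$ together with another element of $\nalg$.
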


\begin{proof}
A Borel subgroup of $P_M$, meaning a maximal connected solvable subgroup, and a maximal torus of its intersection with $M$ determine positive roots. The subspace $F^i\nalg$ is the sum of root spaces whose coefficients on the simple roots omitted from $M$ have total at least $i$. Root addition gives
\[
 \nalg=F^1\nalg\supset F^2\nalg\supset\cdots\supset F^{r+1}\nalg=0,
 \qquad [F^i\nalg,F^j\nalg]\subset F^{i+j}\nalg.
\]
The Levi algebra preserves this filtration. Regularity makes all eigenvalues of $\ad(m)$ on $\nalg$ nonzero. The commuting nilpotent operator $\ad(e)$ preserves its eigenspaces, so on the $c$-eigenspace, $\ad(m+e)=c\,\mathrm{id}+\ad(e)$ has inverse given by the finite geometric series in $-c^{-1}\ad(e)$. It is therefore invertible on each filtration quotient.

The algebraic exponential isomorphism $\nalg\to N$ turns the orbit map into
\[
 \Psi(X)=\Ad(\exp X)(m+e)-(m+e)
       =[X,m+e]+\frac12[X,[X,m+e]]+\cdots.
\]
This finite series satisfies, for $V\in F^i\nalg$,
\[
 \Psi(X+V)-\Psi(X)\equiv[V,m+e]\pmod{F^{i+1}\nalg},
\]
because every further term containing $V$ also contains an element of $F^1\nalg$. Indeed, its graded maps are the invertible operators $-\ad(m+e)$. Thus Lemma~\ref{lem:filteredinverse} proves the first assertion; acting by $M$ then gives the second.
\end{proof}

\begin{example}[Two-block residues with a chosen two-plane]\label{ex:SL4parabolic}
For the parabolic stabilizing the first two-plane in Example~\ref{ex:SL4running},
\[
 n(X)=\begin{pmatrix}I_2&X\\0&I_2\end{pmatrix}
\]
{gives}
\begin{equation}\label{eq:SL4unipotentcalculation}
 \Ad(n(X))m(a)=\begin{pmatrix}aI_2&-2aX\\0&-aI_2\end{pmatrix}.
\end{equation}
For $a\ne0$, an upper-right block $B$ uniquely determines $X=-B/(2a)$.

The fiber $\widetilde\cJ_a$ consists of pairs $(V,T)$ with $V\subset\C^4$ a two-plane, $T|_V=a\,\mathrm{id}$, and induced operator $-a\,\mathrm{id}$ on $\C^4/V$. The map $\nu_a$ forgets $V$. For $a\ne0$, the block form gives $T^2=a^2I$, so $T$ is semisimple and $V$ its uniquely determined $a$-eigenspace. Thus $\nu_a$ identifies the fiber with the orbit of $m(a)$.

At $a=0$, the conditions are
\[
 \operatorname{im}T\subset V\subset\ker T.
\]
For fixed $V$, these operators form $\operatorname{Hom}(\C^4/V,V)=T_V^*\operatorname{Gr}(2,4)$, where $\operatorname{Gr}(2,4)$ is the Grassmannian of two-planes in $\C^4$. Hence the zero fiber is $T^*\operatorname{Gr}(2,4)$, with image $\overline{\mathcal O_{[2^2]}}$, the square-zero matrices of rank at most two. In particular, rank two forces $V=\operatorname{im}T=\ker T$, giving degree one over $\mathcal O_{[2^2]}$; the fiber over $T=0$ is the whole Grassmannian.
\end{example}

\subsection{The finite cover over the induced orbit}

The embedding $[h;y]\mapsto(hP_M,\Ad(h)y)$ identifies $\GD\times^{P_M}(\overline\cO+\nalg)$ with a closed subvariety of $\GD/P_M\times\gD$. Consequently, projectivity of $\GD/P_M$ and the dimension formula give a proper, generically finite morphism
\[
 \overline\nu_0:
 \GD\times^{P_M}(\overline\cO+\nalg)
 \longrightarrow\overline{\Ind_{\lalg}^{\gD}(\cO)}.
\]
Its generic degree counts the compatible parabolic reductions of a general residue in the induced orbit. This is the \emph{generalized Springer degree} of $(P_M,\cO)$. A finite \'etale cover has finite fibers and no ramification; over $\C$ it is locally a holomorphic isomorphism. For $\cO=0$ this is the generalized Springer map from $T^*(\GD/P_M)$; for a Borel its degree is one.

\begin{lemma}[The cover over the induced orbit]\label{lem:inducedorbitcover}
The induced orbit and the spaces above it are
\[
 \mathscr O_0=\Ind_{\lalg}^{\gD}(\cO),\qquad
 S_0=(\cO+\nalg)\cap\mathscr O_0,
 \qquad
 \widetilde{\mathscr O}_0=\GD\times^{P_M}S_0.
\]
Forgetting the parabolic reduction gives a finite \'{e}tale surjection
\[
 \nu_0:\widetilde{\mathscr O}_0\longrightarrow\mathscr O_0.
\]
Its source is a smooth irreducible homogeneous $\GD$-space, and its degree is the generalized Springer degree of $(P_M,\cO)$.
\end{lemma}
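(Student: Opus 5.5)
The plan is to first establish the structure of $S_0$, then of $\widetilde{\mathscr O}_0$, and finally analyse $\nu_0$ as an equivariant map between homogeneous spaces.

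\emph{Step 1: $S_0$ is a single $P_M$-orbit.} By Definition~\ref{def:nilpotentinduction}, $S_0$ is open and dense in the irreducible variety $\cO+\nalg$, so it is smooth and irreducible of dimension $\dim\cO+\dim\nalg$. It is $P_M$-stable because $\cO+\nalg$ and $\mathscr O_0$ both are. For $y\in S_0$, the Lusztig--Spaltenstein theory gives the stabilizer estimate $\dim Z_{P_M}(y)=\dim Z_{\GD}(y)$; equivalently $Z_{\GD}(y)^\circ\subset P_M$. With Proposition~\ref{prop:induceddimension} this yields
\[
 \dim P_M\cdot y=\dim P_M-\dim\gD+\dim\mathscr O_0=\dim\cO+\dim\nalg .
\]
I will quote this from \cite{LS,CM} rather than reprove it. Hence every $P_M$-orbit in $S_0$ is open in the irreducible $S_0$, so $S_0=P_M\cdot y$ is a single orbit. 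It follows that $\widetilde{\mathscr O}_0=\GD\times^{P_M}S_0\cong\GD/Z_{P_M}(y)$ is a smooth, irreducible, homogeneous $\GD$-space, irreducible because $\GD$ is connected. Its dimension is $\dim\gD-\dim\palg+\dim\cO+\dim\nalg=\dim\mathscr O_0$, using $\dim\gD-\dim\palg=\dim\nalg$.

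\emph{Step 2: $\nu_0$ is finite \'etale and surjective.} The map $\nu_0$ is $\GD$-equivariant and lands in $\mathscr O_0$ by the definition of $S_0$. It is surjective, since $\mathscr O_0$ is a single orbit and the image is nonempty. In homogeneous terms it is the quotient map $\GD/Z_{P_M}(y)\to\GD/Z_{\GD}(y)$. Because $Z_{P_M}(y)$ contains $Z_{\GD}(y)^\circ$, it has finite index in $Z_{\GD}(y)$, so this quotient map is a finite \'etale cover whose fiber is $Z_{\GD}(y)/Z_{P_M}(y)$. Alternatively, it is an equivariant morphism of homogeneous spaces of equal dimension in characteristic zero, hence smooth of relative dimension zero, and it is proper because it is the restriction of the proper map $\overline\nu_0$ over the open set $\mathscr O_0$.

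\emph{Step 3: the degree.} To see that the restriction of $\overline\nu_0$ over $\mathscr O_0$ is exactly $\nu_0$, take $[h;y']\in\GD\times^{P_M}(\overline\cO+\nalg)$ with $\Ad(h)y'\in\mathscr O_0$. Then $y'\in\mathscr O_0\cap(\overline\cO+\nalg)$, and a dimension count forces $y'\in S_0$. Indeed, points of $(\overline\cO\setminus\cO)+\nalg$ lie in orbit closures of strictly smaller-dimensional induced data, since $\Ind$ respects closures and dimensions by \eqref{eq:induceddimension}. So the preimage of $\mathscr O_0$ is $\widetilde{\mathscr O}_0$, and the degree of $\nu_0$ equals the generic degree of $\overline\nu_0$, which is the generalized Springer degree.

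\emph{Main obstacle.} The hard point is the stabilizer containment $Z_{\GD}(y)^\circ\subset P_M$ for $y\in S_0$, together with the closure statement in Step 3. For both I would rely on the Lusztig--Spaltenstein results in \cite{LS,CM} rather than reprove them.
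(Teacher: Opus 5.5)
Your proof is correct, but it runs in the opposite direction from the paper's. You take as input the Lusztig--Spaltenstein statement that $Z_{\GD}(y)^\circ\subset P_M$ for $y\in S_0$, which is equivalent to $S_0$ being a single $P_M$-orbit and is part of the induction theorem \cite{LS}. From it you read off $\widetilde{\mathscr O}_0\cong\GD/Z_{P_M}(y)$ and identify $\nu_0$ with the quotient $\GD/Z_{P_M}(y)\to\GD/Z_{\GD}(y)$ by a finite-index subgroup. Finiteness, \'etaleness and the fiber $Z_{\GD}(y)/Z_{P_M}(y)$ then come at once, and you need properness only to match the degree with that of $\overline\nu_0$.

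The paper uses only the density and dimension parts of induction. It first shows $\overline\nu_0^{-1}(\mathscr O_0)=\widetilde{\mathscr O}_0$ by a dimension count on the boundary strata. From this it gets properness, and then finiteness from dominance, equal dimensions and equivariance. Homogeneity follows because every $\GD$-orbit in the irreducible $\widetilde{\mathscr O}_0$ has full dimension and is therefore open. \'Etaleness then comes from equality of stabilizer Lie algebras, which is the same final step as yours. The single-$P_M$-orbit property of $S_0$ is recovered afterwards as a consequence rather than assumed.

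So the paper's route is more self-contained: it effectively rederives that part of Lusztig--Spaltenstein from properness. Yours is shorter given the citation and exhibits the degree directly as the index $[Z_{\GD}(y):Z_{P_M}(y)]$.

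In your Step~3, compatibility of induction with closures is not needed. For $\cO'\subset\overline\cO\setminus\cO$, the stratum $\GD\times^{P_M}(\cO'+\nalg)$ has dimension $2\dim\nalg+\dim\cO'<\dim\mathscr O_0$. Its $\GD$-stable image therefore cannot meet $\mathscr O_0$, and this simple count is exactly the paper's argument.
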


\begin{proof}
Each of the finitely many $M$-orbits $\cO'\subset\overline\cO\setminus\cO$ gives a stratum of dimension
\[
 \dim\bigl(\GD\times^{P_M}(\cO'+\nalg)\bigr)
 =2\dim\nalg+\dim\cO'
 <2\dim\nalg+\dim\cO=\dim\mathscr O_0.
\]
Its $\GD$-stable image cannot meet $\mathscr O_0$, since it would then contain that orbit. Thus
\[
 \overline\nu_0^{-1}(\mathscr O_0)=\widetilde{\mathscr O}_0,
\]
so $\nu_0$ is proper. Since the map is dominant and the dimensions are equal, it is generically finite; equivariance then makes every fiber zero-dimensional. It is therefore finite by properness.

Induction makes $S_0$ open dense in $\cO+\nalg$, so $\widetilde{\mathscr O}_0$ is smooth irreducible of dimension $\dim\mathscr O_0$. Each $\GD$-orbit surjects onto $\mathscr O_0$ with finite fibers, hence has full dimension and is open. Irreducibility forces a single orbit.

For $\widetilde y$ above $y$, the stabilizers satisfy $H'=(\GD)_{\widetilde y}\subset H=(\GD)_y$. The map is
\[
 \GD/H'\longrightarrow\GD/H.
\]
Finiteness gives $[H:H']<\infty$, hence $\Lie(H')=\Lie(H)$. Its differential between smooth homogeneous spaces is therefore an isomorphism, proving \'{e}taleness. The degree equals that of $\overline\nu_0$.
\end{proof}

We observe that transitivity on $\GD\times^{P_M}S_0$ implies that $S_0$ is a single $P_M$-orbit: an element taking $[1;y_1]$ to $[1;y_2]$ must lie in $P_M$. Since $H'$ has finite index in $H$, it contains $H^\circ$. The degree is $[H:H']=[H/H^\circ:H'/H^\circ]$, where $H^\circ$ is the identity component and $H/H^\circ$ the component group. These cosets parametrize the parabolic reductions over $y$.

\begin{theorem}[Regular fibers and the induced nilpotent orbit]\label{thm:parabolicfibers}
For $m\in\zM^{\reg}$ and $e\in\cO$, the morphism $\nu_m$ is an isomorphism
\[
 \widetilde\cJ_m\xrightarrow{\sim}\GD\cdot(m+e).
\]
At $m=0$, its image is $\GD\cdot(\cO+\nalg)$, containing $\Ind_{\lalg}^{\gD}(\cO)$ as a dense open orbit. Every fiber of $\widetilde\cJ\to\zM$ has dimension $\dim\gD-\dim\lalg+\dim\cO$.
\end{theorem}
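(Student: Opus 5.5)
For $m\in\zM^{\reg}$ I will first check that $\nu_m$ is a bijection onto $\GD\cdot(m+e)$, and then that it is an isomorphism of varieties.

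\emph{Image.} By Lemma~\ref{lem:unipotentorbit}, $m+\cO+\nalg=P_M\cdot(m+e)$. Hence $\widetilde\cJ_m=\GD\times^{P_M}P_M\cdot(m+e)$, and its image under $\nu_m$ is exactly $\GD\cdot(m+e)$.

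\emph{Injectivity.} Suppose $[h_1;y_1]$ and $[h_2;y_2]$ have the same image. After translating by $\GD$ and using the $P_M$-action, we may assume $y_1=y_2=m+e$ and $\Ad(h)(m+e)=m+e$ with $h=h_2^{-1}h_1$. Proposition~\ref{prop:centralizer} gives $Z_{\GD}(m+e)=Z_M(e)\subset P_M$, so $h\in P_M$ and the two points of $\widetilde\cJ_m$ coincide.

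\emph{Isomorphism.} Since $\widetilde\cJ_m=\GD\times^{P_M}(P_M/Z_M(e))\cong\GD/Z_M(e)$, the source is the homogeneous space $\GD/Z_{\GD}(m+e)$. The orbit map $\GD/Z_{\GD}(m+e)\to\GD\cdot(m+e)$ is a bijective equivariant morphism of homogeneous spaces. Over $\C$ it is an isomorphism: its differential is an isomorphism because the Lie-algebra stabilizer is $\mathfrak z_{\gD}(m+e)$ (Proposition~\ref{prop:centralizer}), so the map is étale and bijective onto a normal variety. Alternatively, one can argue directly with the smooth homogeneous-space criterion used in Lemma~\ref{lem:inducedorbitcover}.

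At $m=0$ the image of $\nu_0$ is visibly $\GD\cdot(\cO+\nalg)$. By Definition~\ref{def:nilpotentinduction}, $\mathscr O_0=\Ind_{\lalg}^{\gD}(\cO)$ meets $\cO+\nalg$ in an open dense subset $S_0$, so $\mathscr O_0$ lies in the image. To see that it is open and dense, note that $\GD\times^{P_M}(\cO+\nalg)$ is irreducible of dimension $2\dim\nalg+\dim\cO=\dim\mathscr O_0$ (Proposition~\ref{prop:induceddimension}). Its image is therefore irreducible, with closure of dimension at most $\dim\mathscr O_0$, and it contains the orbit $\mathscr O_0$ of that full dimension. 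Hence $\mathscr O_0$ is dense in the image. It is also open there, because an orbit is open in its closure, and the closure of the image equals $\overline{\mathscr O_0}$.

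For the dimension statement, each fiber $\widetilde\cJ_m$ is a bundle over $\GD/P_M$ with fiber $m+\cO+\nalg$. Its dimension is therefore $\dim\gD-\dim\palg+\dim\cO+\dim\nalg$. Since $\dim\palg=\dim\lalg+\dim\nalg$ and $2\dim\nalg=\dim\gD-\dim\lalg$, this equals $2\dim\nalg+\dim\cO=\dim\gD-\dim\lalg+\dim\cO$, which agrees with \eqref{eq:sheetorbitdimension} in the regular case.

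The only delicate step is upgrading the bijective morphism $\nu_m$ to an isomorphism. I would settle it through the equality of Lie-algebra stabilizers in Proposition~\ref{prop:centralizer}, exactly as in the étaleness argument of Lemma~\ref{lem:inducedorbitcover}.
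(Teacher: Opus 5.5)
Your proposal is correct and follows essentially the same route as the paper: you identify $\widetilde\cJ_m$ with $\GD/Z_M(e)=\GD/Z_{\GD}(m+e)$ using Lemma~\ref{lem:unipotentorbit} and Proposition~\ref{prop:centralizer}, so that $\nu_m$ becomes the orbit map; you use induction together with openness of an orbit in its closure at $m=0$; and you compute the associated-bundle dimension. The differences are minor: you make explicit the injectivity check and the upgrade of a bijective \'etale map to an isomorphism, which the paper leaves implicit, and at $m=0$ you obtain density from a dimension count rather than directly from the density of $S_0$ in $\cO+\nalg$.
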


\begin{proof}
For regular $m$, Lemma~\ref{lem:unipotentorbit} and Proposition~\ref{prop:centralizer} give
\[
 \GD\times^{P_M}(m+\cO+\nalg)
 \cong\GD\times^{P_M}(P_M/Z_M(e))
 \cong\GD/Z_M(e)
 \cong\GD\cdot(m+e),
\]
Under these identifications, $\nu_m$ is the orbit map. At zero, induction identifies the dense orbit in $\GD\cdot(\cO+\nalg)$; it is open because it is open in its closure. Finally, for every $m$, the associated-bundle dimension is
\[
 \dim(\GD/P_M)+\dim\cO+\dim\nalg
 =2\dim\nalg+\dim\cO
 =\dim\gD-\dim\lalg+\dim\cO.
\]
\end{proof}

\subsection{The parameter space of Jordan-class orbits}

A \emph{geometric quotient} is a surjective invariant morphism with the group orbits as geometric fibers, the quotient topology, and regular functions equal to the invariant functions on inverse images of open subsets.

\begin{proposition}[The geometric quotient of a Jordan class]\label{prop:jordanclassquotient}
For a fixed $e\in\cO$, the Jordan class $J_{\GD}(\lalg,\cO)$ carries its reduced locally closed structure. There are canonical isomorphisms
\begin{equation}\label{eq:jordanassociatedbundle}
 J_{\GD}(\lalg,\cO)
 \simeq\bigl(\GD\times^M(\zM^{\reg}\times\cO)\bigr)/Q_M
 \simeq\GD\times^{N_{\GD}(M,\cO)}(\zM^{\reg}\times\cO).
\end{equation}
The $Q_M$-action on $\zM^{\reg}$ is free with finite \'{e}tale quotient. The morphism
\begin{equation}\label{eq:jordanorbitquotient}
 \pi_J:J_{\GD}(\lalg,\cO)\longrightarrow\zM^{\reg}/Q_M,
 \qquad \Ad(g)(m+e')\longmapsto[m],
\end{equation}
is the smooth geometric quotient by $\GD$, with reduced geometric fiber $\GD\cdot(m+e)$ over $[m]$.
\end{proposition}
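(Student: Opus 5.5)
The plan is to build the isomorphism from the action map
\[
 a:\GD\times^M(\zM^{\reg}\times\cO)\longrightarrow\gD,\qquad [g;(m,e')]\longmapsto\Ad(g)(m+e').
\]
Here $M$ acts trivially on $\zM$ and by conjugation on $\cO$. The source is smooth, since it is a homogeneous bundle over $\GD/M$ with smooth fiber $\zM^{\reg}\times\cO$, and its image is $J_{\GD}(\lalg,\cO)$ by definition. The group $N_{\GD}(M,\cO)$ acts on the source from the right by $[g;(m,e')]\cdot n=[gn;(\Ad(n^{-1})m,\Ad(n^{-1})e')]$. This is well defined because $n$ normalizes $M$, preserves $\zM$ and $\zM^{\reg}$, and preserves $\cO$. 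Since $a$ is invariant under this action and $M$ acts trivially on the source, we obtain a $Q_M$-action, and $a$ factors through the quotient. It therefore suffices to prove three things: the $Q_M$-action is free (in fact already on $\zM^{\reg}$), the fibers of $a$ are exactly the $Q_M$-orbits, and the induced bijection onto $J$ is an isomorphism.

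\emph{Freeness and fibers.} Suppose $n\in N_{\GD}(M,\cO)$ fixes some $m\in\zM^{\reg}$. Then $n\in Z_{\GD}(m)=M$, so $Q_M$ acts freely on $\zM^{\reg}$, and hence freely on the bundle. Because $Q_M$ is finite, the quotients $\zM^{\reg}\to\zM^{\reg}/Q_M$ and the corresponding bundle quotient are finite étale. For the fibers, suppose $\Ad(g)(m+e')=\Ad(g')(m'+e'')$. Uniqueness of the Jordan decomposition gives $\Ad(g)m=\Ad(g')m'$ and $\Ad(g)e'=\Ad(g')e''$. Put $h=g'^{-1}g$. Then $\Ad(h)m=m'$, so $h$ conjugates $Z_{\GD}(m)=M$ onto $Z_{\GD}(m')=M$, i.e.\ $h\in N_{\GD}(M)$. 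Moreover $\Ad(h)e'=e''$, so $h$ maps $\cO$ into itself, and $h\in N_{\GD}(M,\cO)$. Therefore the two points differ by the $N_{\GD}(M,\cO)$-action, and the fibers of $a$ are single $Q_M$-orbits. This gives both descriptions in \eqref{eq:jordanassociatedbundle} at the level of points. The second form uses $\GD\times^M(\cdot)/Q_M=\GD\times^{N_{\GD}(M,\cO)}(\cdot)$.

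\emph{Isomorphism onto the reduced structure.} This is the main obstacle, because a bijective morphism onto a locally closed subset need not be an isomorphism. I would show that $a$ is injective on tangent spaces. At $[1;(m,e')]$ the differential is
\[
 (X,\dot m,\dot e)\mapsto[X,m+e']+\dot m+\dot e,\qquad X\in\gD/\lalg,\ \dot m\in\zM,\ \dot e\in[\lalg,e'].
\]
The component $[X,m+e']$ lies in the complement $\nalg\oplus\nalg^-$. Indeed, $\ad(m+e')$ is invertible there, as in the proof of Lemma~\ref{lem:unipotentorbit}, so this component determines $X$. The remaining terms lie in $\zM\oplus[\lalg,\lalg]$, which separates $\dot m$ from $\dot e$. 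Hence the induced map from the smooth quotient is an injective immersion with image $J$. For the dimension count: the source has dimension $\dim\gD-\dim\lalg+\dim\zM+\dim\cO$, which equals $\dim J$ because the orbit dimension is \eqref{eq:sheetorbitdimension} and the parameter space is $\zM^{\reg}/Q_M$. An injective, immersive, orbit-compatible morphism onto $J$ with the reduced structure is then an isomorphism onto its image. To justify this I would argue that $J$ is open in its closure and apply Zariski's main theorem: the map is bijective and étale onto the normal (indeed smooth) image. If this step is delicate, the fallback is to construct the inverse directly on $J$. One takes the semisimple part $y\mapsto y_s$, which is a morphism on $J$ because the characteristic polynomial of $\ad$ is constant along $J$. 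The map $y\mapsto\Ad(g)^{-1}y_s$ then defines the $\zM^{\reg}/Q_M$-coordinate.

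\emph{The quotient map.} Finally, $\pi_J$ corresponds to the projection $\GD\times^{N_{\GD}(M,\cO)}(\zM^{\reg}\times\cO)\to\zM^{\reg}/Q_M$. Its fiber over $[m]$ is $\GD\times^M(\{m\}\times\cO)\cong\GD\times^M(M/Z_M(e))=\GD/Z_M(e)$. By Proposition~\ref{prop:centralizer} this is the reduced orbit $\GD\cdot(m+e)$. Étale-locally on the base, the map is the product projection of $\GD\times^M\cO$ times $\zM^{\reg}$ onto the second factor. Thus $\pi_J$ is smooth, and it has a local section $m\mapsto m+e$ through $e$. A smooth surjective morphism whose fibers are exactly the $\GD$-orbits, with $\GD$-invariant functions descending by flat descent, is a geometric quotient. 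This proves the last claim.
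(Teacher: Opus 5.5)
Your outline follows the paper's proof closely. The common steps are: the orbit map out of $\GD\times^M(\zM^{\reg}\times\cO)$; freeness from $Z_{\GD}(m)=M$; injectivity on points from uniqueness of the Jordan decomposition; injectivity of the differential from invertibility of $\ad(m+e')$ on $\nalg\oplus\nalg^-$; and the quotient statement by base change along the finite \'etale map $\zM^{\reg}\to\zM^{\reg}/Q_M$.

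\textbf{The gap is at the step you flag as the main obstacle.} A bijective morphism with injective differential, from a smooth variety onto a reduced locally closed subvariety, need not be an isomorphism. For example, the normalization of a nodal cubic with one preimage of the node deleted is bijective and unramified, but it is not an isomorphism. You also cannot call the map \'etale, since flatness is exactly what is missing. Injectivity of the differential into $\gD$ does not bound the Zariski tangent spaces of $J$, which could a priori be larger than those of the source. What you need is that the reduced Jordan class $J$ is smooth; normality would suffice for Zariski's main theorem. You only assert this parenthetically, but it is the actual content of the step: given a bijective unramified map from a smooth source, smoothness of $J$ is equivalent to the conclusion. The paper closes exactly this point by importing smoothness of Jordan classes from Broer's lectures. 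The dimension count then makes the differential an isomorphism onto $T_yJ$, so the map is \'etale and injective, hence an open immersion, hence an isomorphism.

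\textbf{Your fallback does not repair this.} The characteristic polynomial of $y$ (or of $\ad y$) is not constant along $J$, because the semisimple part runs through $\zM^{\reg}$. Only the orbit dimension and the eigenvalue multiplicity pattern are constant, so regularity of $y\mapsto y_s$ on $J$ does not follow from your stated reason. Moreover, $\Ad(g)^{-1}y_s$ involves a choice of $g$ that is not algebraic in $y$. Recovering a point of $\GD\times^{N_{\GD}(M,\cO)}(\zM^{\reg}\times\cO)$ from $y_s$ is the same problem again, for the semisimple class $\GD\cdot\zM^{\reg}$.

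If you want a self-contained argument instead of the citation, use an \'etale slice:
\begin{itemize}
\item Let $\lalg^\circ=\{y\in\lalg:\ad(y)\text{ is invertible on }\gD/\lalg\}$. Your differential computation shows that $\GD\times^M\lalg^\circ\to\gD$ is \'etale.
\item An element of $\lalg^\circ\cap J$ has semisimple part whose centralizer is contained in $\lalg$. By the dimension count it equals $\lalg$, so the semisimple part lies in $\zM^{\reg}$.
\item Hence the preimage of $J$ is the disjoint union of the smooth pieces $\GD\times^M(\zM^{\reg}+\Ad(n)\cO)$, indexed by $N_{\GD}(M)/N_{\GD}(M,\cO)$. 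These are equidimensional, and none lies in the closure of another.
\item \'Etale pullback preserves reducedness, so the reduced $J$ is smooth.
\end{itemize}
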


\begin{proof}
On $E_J=\GD\times^M(\zM^{\reg}\times\cO)$, a representative $n\in N_{\GD}(M,\cO)$ acts by
\[
 (nM)\cdot[g;(m,e')]_M
 =[gn^{-1};(\Ad(n)m,\Ad(n)e')]_M.
\]
Replacing $n$ modulo $M$ leaves the point unchanged. The action is free, since a fixed point forces $gn^{-1}M=gM$, hence $n\in M$. Successive quotients by $M$ and $Q_M$ give the second associated bundle in \eqref{eq:jordanassociatedbundle}.

The orbit map induces a surjective morphism
\[
 \Phi:E_J/Q_M\longrightarrow J_{\GD}(\lalg,\cO),
 \qquad[g;(m,e')]\longmapsto\Ad(g)(m+e').
\]
If $\Ad(g_1)(m_1+e_1)=\Ad(g_2)(m_2+e_2)$, uniqueness of Jordan decomposition matches the semisimple and nilpotent parts. Hence $g_2^{-1}g_1$ normalizes their centralizer $M$ and preserves $\cO$. The representatives therefore define the same point of $E_J/Q_M$, proving injectivity over every algebraically closed extension.

The differential can be computed in the decomposition $\gD=\lalg\oplus\mathfrak u$, where $\mathfrak u$ is the sum of the nonzero $\ad(m)$-eigenspaces. Changing the group coordinate, the central parameter, and the nilpotent residue gives the differential at $m+e'$:
\[
 \mathfrak u\oplus\zM\oplus[\lalg,e']\longrightarrow\gD,
 \qquad(A,v,[B,e'])\longmapsto[A,m+e']+v+[B,e'].
\]
Invertibility of $\ad(m+e')$ on $\mathfrak u$, as in Lemma~\ref{lem:unipotentorbit}, forces $A=0$ in its kernel. Since the remaining terms lie in the center and derived algebra of $\lalg$, both vanish. The image of this differential lies in the tangent space to the Jordan class. The source $E_J/Q_M$ and target $J_{\GD}(\lalg,\cO)$ are smooth of dimension $\dim\GD-\dim Z_M(e')+\dim\zM$; smoothness of Jordan classes is proved in \cite{BroerLectures}. Thus $\Phi$ is \'{e}tale. Injectivity on geometric points makes it an open immersion, and surjectivity makes it an isomorphism.

A representative of $q\in Q_M$ fixing $m\in\zM^{\reg}$ belongs to $Z_{\GD}(m)=M$, so $q=1$. Its affine quotient is therefore finite \'{e}tale. After base change by $\zM^{\reg}\to\zM^{\reg}/Q_M$, $\pi_J$ becomes
\[
 \zM^{\reg}\times\GD/Z_M(e)\longrightarrow\zM^{\reg},
\]
using $\cO\cong M/Z_M(e)$. This smooth projection has the ambient orbits as fibers; smoothness and the geometric-quotient property descend along the finite \'{e}tale cover.
\end{proof}
\section{Sheets and Central Residue Families}\label{sec:sheets}

\subsection{Orbit dimensions and sheet data}

A \emph{sheet} is an irreducible component of an orbit-dimension locus
\[
 (\gD)_{(d)}=\{x\in\gD:\dim\GD\cdot x=d\},\qquad d\geq0.
\]
The invariant form identifies this locus with $(\gD^*)_{(d)}$. For the Jordan class $J_{\GD}(\lalg,\cO)$, Proposition~\ref{prop:centralizer} gives the common orbit dimension $\dim\gD-\dim\lalg+\dim\cO$.

\begin{definition}\label{def:sheetdatum}
A nilpotent orbit $\cO$ is called \emph{rigid in $\lalg$} if it is not induced from a proper Levi subalgebra. A \emph{sheet datum} is a $\GD$-conjugacy class of pairs $(\lalg,\cO)$ with $\cO$ rigid in $\lalg$.
\end{definition}

The following result provides a classification of the sheet data and is due to Borho: 
\begin{theorem}[Borho \cite{Borho}]\label{thm:borho}
Each sheet datum $(\lalg,\cO)$ determines a unique sheet with open dense Jordan class $J_{\GD}(\lalg,\cO)$; every sheet arises uniquely this way.
\end{theorem}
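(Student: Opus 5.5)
The plan is to prove Borho's theorem by passing through the closures of Jordan classes and using the finiteness of Jordan classes together with the dimension formula of Proposition~\ref{prop:centralizer}. First I would note that $\gD$ is a finite disjoint union of Jordan classes. Indeed, every $x$ has Jordan decomposition $x=x_s+x_n$. The centralizer $\lalg=\mathfrak z_{\gD}(x_s)$ is a Levi subalgebra with $x_s\in\zM^{\reg}$, and $x_n$ lies in a nilpotent orbit $\cO$ of $[\lalg,\lalg]$. There are finitely many Levis up to conjugacy and finitely many nilpotent orbits in each. Each $J_{\GD}(\lalg,\cO)$ is irreducible, being the image of $\GD\times(\zM^{\reg}+\cO)$. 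It also has constant orbit dimension $d=\dim\gD-\dim\lalg+\dim\cO$ by Proposition~\ref{prop:centralizer}. Hence each locus $(\gD)_{(d)}$ is a finite union of irreducible Jordan classes, and every sheet is the closure inside $(\gD)_{(d)}$ of a unique Jordan class open dense in it. It remains to identify which Jordan classes are maximal.

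The key step is a description of the closure of a Jordan class, in the spirit of Theorem~\ref{thm:parabolicfibers}. I would show that
\[
\overline{J_{\GD}(\lalg,\cO)}\cap(\gD)_{(d)}
\]
is the union of the $\GD$-conjugates of $\zM+\cO+\nalg$ having orbit dimension $d$. The argument uses the family $\widetilde\cJ$. Its image $\GD\cdot(\zM+\overline\cO+\nalg)$ is closed because $\GD/P_M$ is projective, which gives closedness exactly as in the properness argument preceding Lemma~\ref{lem:inducedorbitcover}. This image contains $J_{\GD}(\lalg,\cO)$ as a dense subset, since the regular fibers are Jordan-class orbits by Theorem~\ref{thm:parabolicfibers}.

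The \emph{Jordan classes of the same orbit dimension in this closure} should be exactly the $J_{\GD}(\lalg',\cO')$ with $\lalg'\supseteq\lalg$ and $\cO'=\Ind_{\lalg}^{\lalg'}(\cO)$. To see this, take $m'$ in a face of $\zM$ with centralizer $\lalg'$. Applying Theorem~\ref{thm:parabolicfibers} inside $\lalg'$ to the parabolic $\palg\cap\lalg'$ produces $m'+\Ind_{\lalg}^{\lalg'}(\cO)$. Proposition~\ref{prop:induceddimension} and transitivity of induction confirm that the dimension is preserved. Lower-dimensional strata, coming from $\overline\cO\setminus\cO$ or from non-dense orbits, drop orbit dimension, as in the proof of Lemma~\ref{lem:inducedorbitcover}.

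Granting this, the classification follows formally. A Jordan class $J(\lalg',\cO')$ lies in the closure of another class of the same dimension if and only if $\cO'$ is induced from a smaller Levi. Hence the maximal classes are exactly those with $\cO'$ rigid in $\lalg'$. Every class lies in the closure of a maximal one: write $\cO'$ as induced from a rigid orbit, which is possible by transitivity and by descending induction on the Levi. Uniqueness of the datum attached to a sheet follows because a sheet contains a unique dense Jordan class, and Jordan classes determine $(\lalg,\cO)$ up to conjugacy, by uniqueness of Jordan decomposition as in Proposition~\ref{prop:jordanclassquotient}. Distinct data give distinct sheets because neither maximal class can lie in the closure of the other.

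I expect the closure description to be the main obstacle, specifically the converse inclusion. One must show that a same-dimensional Jordan class in the closure must arise from induction with $\lalg'\supseteq\lalg$ up to conjugacy. I would first try to pass to semisimple parts: closure relations among Jordan classes are compatible with taking the centralizer of the semisimple part, which forces $\lalg'$ to contain a conjugate of $\lalg$. The problem then reduces to the nilpotent case inside $\lalg'$. There, Lusztig--Spaltenstein's characterization of $\Ind$ as the unique dense orbit of maximal dimension in $\GD\cdot(\cO+\nalg)$ settles it. If this becomes too delicate, I would cite Borho and Broer \cite{Borho,BroerLectures} for this closure theorem.
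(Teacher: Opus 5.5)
The paper does not prove this theorem. It quotes Borho, and proves only the related fact that $\Ind_{\lalg}^{\gD}(\cO)$ lies in the sheet (Proposition~\ref{prop:inducedboundary}). Your outline is the standard Borho--Kraft/Borho argument, and it is correct. It has four steps:
\begin{enumerate}
\item finitely many irreducible Jordan classes of constant orbit dimension;
\item $\overline{J_{\GD}(\lalg,\cO)}=\GD\cdot(\zM+\overline\cO+\nalg)$, by projectivity of $\GD/P_M$;
\item identification of the same-dimensional part of this closure;
\item the formal passage from maximal classes to rigid data, with uniqueness coming from uniqueness of Jordan decomposition.
\end{enumerate}
Compared with the bare citation, this makes the theorem self-contained relative to Section~\ref{sec:leviresidues} plus Lusztig--Spaltenstein induction. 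It also proves the closure relation $J_{\GD}(\lalg,\cO)\subset\overline{J_{\GD}(\lalg_s,\cO_s)}$, which Section~\ref{subsec:restrictedfamilies} likewise only quotes. You use two standard inputs silently and should state them: local closedness of Jordan classes (needed for ``open dense''), and connectedness of centralizers of semisimple elements of $\gD$ (needed for $x_s\in\zM^{\reg}$).

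The converse inclusion you single out as the main obstacle is already settled by your own ingredients. No separate principle about semisimple centralizers is needed.
\begin{itemize}
\item Fix $m'\in\zM$. The set $m'+\overline\cO+\nalg$ is $P_M$-stable, so $C_{m'}=\GD\cdot(m'+\overline\cO+\nalg)$ is the image of the irreducible variety $\GD\times^{P_M}(m'+\overline\cO+\nalg)$. This variety has dimension $2\dim\nalg+\dim\cO=d$, as in Theorem~\ref{thm:parabolicfibers}.
\item Hence $C_{m'}$ contains at most one orbit of dimension $d$, and such an orbit is necessarily open and dense in it.
\item Your forward computation inside $\lalg'=\mathfrak z_{\gD}(m')$ exhibits such an orbit, namely $\GD\cdot(m'+y)$ with $y\in\Ind_{\lalg}^{\lalg'}(\cO)\cap(\cO+\nalg\cap\lalg')$.
\end{itemize}
Since $\overline J=\bigcup_{m'\in\zM}C_{m'}$, it follows that
\[
 \overline J\cap(\gD)_{(d)}=\bigcup_{m'\in\zM}\GD\cdot\bigl(m'+\Ind_{\lalg}^{\mathfrak z_{\gD}(m')}(\cO)\bigr).
\]
This one step handles the $\overline\cO\setminus\cO$ strata, the non-dense orbits, and the converse inclusion together. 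The containment $\lalg'\supseteq\lalg$ is automatic, because $\lalg'$ is the centralizer of an element of $\zM$.
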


The \emph{rank} of this sheet is $\dim\zM$, the number of central parameters in its dense Jordan class.
The other Jordan classes in a sheet have larger semisimple centralizers and nilpotent parts determined by induction \cite{Borho,Broer,Summerfield,CM,dGE}. For a rigid $\cO$, the subset $\zM+\cO$ is itself a sheet of $\lalg$ \cite{Borho}.
The zero orbit is rigid, since induction from a proper Levi adds $2\dim\nalg$ to the orbit dimension. A sheet with datum $(\lalg,0)$ is called a \emph{Dixmier sheet}; its dense Jordan class is semisimple, and its nilpotent orbit is the Richardson orbit $\Ind_{\lalg}^{\gD}(0)$ \cite{BK,LS}.

\begin{proposition}[The induced nilpotent orbit in a sheet]\label{prop:inducedboundary}
The induced orbit $\Ind_{\lalg}^{\gD}(\cO)$ has the same dimension as the orbits in $J_{\GD}(\lalg,\cO)$. If $\cO$ is rigid and $S$ is the sheet containing this Jordan class, then $\Ind_{\lalg}^{\gD}(\cO)\subset S$.
\end{proposition}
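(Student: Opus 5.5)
The first assertion is already contained in Proposition~\ref{prop:induceddimension}: equation \eqref{eq:induceddimension} gives $\dim\Ind_{\lalg}^{\gD}(\cO)=\dim\gD-\dim\lalg+\dim\cO$. By Proposition~\ref{prop:centralizer}, this is also the common orbit dimension $d$ on $J_{\GD}(\lalg,\cO)$. Hence both $\Ind_{\lalg}^{\gD}(\cO)$ and $J_{\GD}(\lalg,\cO)$ lie in the locus $(\gD)_{(d)}$. The work is therefore in the second assertion: placing the induced orbit inside the particular irreducible component $S$.

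The plan is to show that $\Ind_{\lalg}^{\gD}(\cO)$ lies in the closure of $J_{\GD}(\lalg,\cO)$ inside $(\gD)_{(d)}$. By Theorem~\ref{thm:borho}, $J_{\GD}(\lalg,\cO)$ is open and dense in $S$. Since $S$ is an irreducible component of $(\gD)_{(d)}$, it is closed in $(\gD)_{(d)}$. So $\overline{J_{\GD}(\lalg,\cO)}\cap(\gD)_{(d)}$ equals $S$, and any point of $(\gD)_{(d)}$ that is a limit of points of the Jordan class lies in $S$.

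To produce these limits, use the family $\widetilde\cJ\to\zM$ and the morphism $\nu$ of \eqref{eq:forgetparabolic}. Its image $\nu(\widetilde\cJ)$ projects in the second factor to $\GD\cdot(\zM+\cO+\nalg)$. The set $\zM\times(\cO+\nalg)$ is irreducible, and $\zM^{\reg}\times(\cO+\nalg)$ is a nonempty open subset of it, hence dense. The map $\GD\times(\zM+\cO+\nalg)\to\gD$, $(g,y)\mapsto\Ad(g)y$, is continuous. Therefore the image of the regular part is dense in $\GD\cdot(\zM+\cO+\nalg)$.

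The next step identifies that regular image. For $m\in\zM^{\reg}$, Lemma~\ref{lem:unipotentorbit} gives $m+\cO+\nalg=P_M\cdot(m+e)$. Hence $\GD\cdot(\zM^{\reg}+\cO+\nalg)=J_{\GD}(\lalg,\cO)$. It follows that $\GD\cdot(\cO+\nalg)$, which by Theorem~\ref{thm:parabolicfibers} contains $\Ind_{\lalg}^{\gD}(\cO)$, lies in $\overline{J_{\GD}(\lalg,\cO)}$. The induced orbit has orbit dimension $d$ by the first part, so it lies in $\overline{J_{\GD}(\lalg,\cO)}\cap(\gD)_{(d)}=S$.

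The only delicate point is the topological step in the second paragraph. One must use that $S$, being an irreducible component, is closed in the locally closed set $(\gD)_{(d)}$, so that taking closure and then intersecting with $(\gD)_{(d)}$ lands in $S$ rather than in another component. Rigidity of $\cO$ enters only through Theorem~\ref{thm:borho}, which guarantees that $S$ is determined by $(\lalg,\cO)$ and contains this Jordan class densely. A more concrete alternative to the density argument is to use the curve $t\mapsto tm+e+n$ with $m\in\zM^{\reg}$ and $e+n\in(\cO+\nalg)\cap\Ind_{\lalg}^{\gD}(\cO)$. For $t\ne0$ this point lies in $J_{\GD}(\lalg,\cO)$, since $tm$ stays regular, and at $t=0$ it lies in the induced orbit.
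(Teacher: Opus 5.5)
Your proof is correct and is essentially the paper's argument. The paper uses exactly the curve $t\mapsto tm+e+n_y$ from your closing remark, with Lemma~\ref{lem:unipotentorbit} placing it in $J_{\GD}(\lalg,\cO)$ for $t\ne0$, and concludes because the closure of the Jordan class inside $(\gD)_{(d)}$ is $S$; your main density argument over $\zM^{\reg}\times(\cO+\nalg)$ is only a global repackaging of that one-parameter degeneration, and your justification of the closure step, via closedness of an irreducible component in $(\gD)_{(d)}$, spells out what the paper leaves implicit.
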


\begin{proof}
The dimension equality is provided by Proposition~\ref{prop:induceddimension}. For any $y=e+n_y\in(\cO+\nalg)\cap\Ind_{\lalg}^{\gD}(\cO)$ and $m\in\zM^{\reg}$, Lemma~\ref{lem:unipotentorbit} now gives
\[
 tm+e+n_y\in P_M\cdot(tm+e)\subset J_{\GD}(\lalg,\cO)
 \quad(t\ne0).
\]
Taking $t=0$ and conjugating places the induced orbit in $\overline{J_{\GD}(\lalg,\cO)}$. Since both have orbit dimension $d=\dim\gD-\dim\lalg+\dim\cO$, the induced orbit lies in the closure of the Jordan class inside $(\gD)_{(d)}$, which is $S$.
\end{proof}

\subsection{Dimension and period conditions}

A holomorphic symplectic form on a complex manifold $Y$ is a closed, nondegenerate holomorphic two-form. Its periods are its integrals over closed two-cycles; they determine its class in complex de Rham cohomology $H^2(Y,\C)$.

\begin{definition}[Linear periods and mass-like deformations]\label{def:linearperiods}
A smooth family of holomorphic symplectic manifolds over a complex vector space $V$ has \emph{linear periods} if, under a differentiable trivialization with fiber $Y$, the map $v\mapsto[\omega_v]-[\omega_0]\in H^2(Y,\C)$ is complex-linear. A residue family parametrized by a Levi center, with symplectic fibers, is called \emph{mass-like} if its regular residue orbits and its induced nilpotent orbit have equal dimension and its symplectic forms have linear periods \cite[Eqns.~(2.13)--(2.14)]{BD}.
\end{definition}

\subsection{Orbit closures at regular parameters}
\label{subsec:centralparameters}

\begin{proposition}[Orbit closures at regular parameters]\label{prop:residualsingularities}
For $m\in\zM^{\reg}$ and $e\in\cO$, there is an isomorphism of reduced varieties
\[
 \overline{\GD\cdot(m+e)}
 =\GD\cdot(m+\overline\cO)
 \simeq\GD\times^M\overline\cO.
\]
Over $\GD\cdot m\simeq\GD/M$, this variety is \'etale locally a product with $\overline\cO$. It is smooth when $\cO=0$ and singular along $\GD\cdot m$ when $\cO\ne0$.
\end{proposition}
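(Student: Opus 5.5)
Consider the morphism
\[
 \Phi:\GD\times^M(m+\overline\cO)\longrightarrow\gD,\qquad [g;m+e']\longmapsto\Ad(g)(m+e'),
\]
where $M$ fixes $m$ and acts on $\overline\cO$ by conjugation. The plan is to prove that $\Phi$ is a closed immersion with image $\GD\cdot(m+\overline\cO)$, and that this image is the orbit closure.

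\emph{Injectivity and the image.} If $\Ad(g_1)(m+e_1)=\Ad(g_2)(m+e_2)$ with $e_i\in\overline\cO\subset[\lalg,\lalg]$ nilpotent, then $m+e_i$ is a Jordan decomposition, because $e_i$ commutes with the central element $m$. Uniqueness of Jordan decomposition gives $\Ad(g_2^{-1}g_1)m=m$, so $g_2^{-1}g_1\in Z_{\GD}(m)=M$, and then $e_2=\Ad(g_2^{-1}g_1)e_1$. Hence $\Phi$ is injective on geometric points. Its image $\GD\cdot(m+\overline\cO)$ is the union of the orbits $\GD\cdot(m+e')$ with $e'\in\overline\cO$, and it is irreducible of dimension $\dim\GD-\dim M+\dim\cO$, which by Proposition~\ref{prop:centralizer} is the dimension of $\GD\cdot(m+e)$.

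\emph{Closedness.} Every element of the image has semisimple part in the closed orbit $\GD\cdot m$, because $\GD\cdot m$ is semisimple. Consider the adjoint quotient $\chi:\gD\to\gD\sslash\GD$. The fiber $\chi^{-1}(\chi(m))$ is closed, and it equals $\GD\cdot(m+\mathcal N(\lalg'))$, where $\mathcal N(\lalg')$ is the nilpotent cone of $\mathfrak z_{\gD}(m)=\lalg$ (Jordan decomposition again). Through $\Phi$ this fiber is $\GD\times^M(m+\mathcal N_{[\lalg,\lalg]})$, a fiber bundle over $\GD/M$ with fiber the nilpotent cone, and $\GD\times^M(m+\overline\cO)$ is a closed sub-bundle of it. To see that $\Phi$ is a closed immersion onto $\chi^{-1}(\chi(m))$ with reduced structure, I will use the Luna slice theorem at the closed orbit $\GD\cdot m$. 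The slice is $m+\lalg$, because $\mathfrak z_{\gD}(m)=\lalg$, and Luna gives an étale map $\GD\times^M(m+\lalg)\to\gD$ that is excellent near $\GD\cdot m$. It restricts to an isomorphism onto the preimage of a saturated neighbourhood, in particular on the whole fiber over $\chi(m)$. Restricting to $m+\overline\cO$ then yields both closedness and the isomorphism of reduced varieties. Since the image is closed, irreducible, of the right dimension, and contains $\GD\cdot(m+e)$ densely, it equals $\overline{\GD\cdot(m+e)}$.

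\emph{Local structure and singularities.} $\GD\times^M\overline\cO\to\GD/M$ is a fiber bundle, étale-locally (indeed Zariski-locally, since $\GD\to\GD/M$ has local sections for a Levi $M$) trivial with fiber $\overline\cO$. The zero section $\{0\}\subset\overline\cO$ corresponds to $\GD\cdot m$. When $\cO=0$ we have $\overline\cO$ a point, so the variety is $\GD/M$ and is smooth. When $\cO\ne0$, $\overline\cO$ is a nonzero nilpotent orbit closure, i.e.\ a cone of positive dimension that is not a linear subspace. It is singular at $0$: a normal or nonnormal cone smooth at its vertex would be a vector space, but $\overline\cO$ consists of nilpotents and is $M$-stable, and an $M$-stable linear subspace of nilpotents is not closed under $\ad$-conjugates spanning a semisimple ideal unless it is zero. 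So the product is singular exactly along the zero section $\GD\cdot m$.

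The main obstacle is the closedness step, namely applying Luna's slice theorem so that the local isomorphism covers the entire fiber $\chi^{-1}(\chi(m))$. This step is also where the reduced-structure identification comes from.
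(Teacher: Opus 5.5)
Your argument is correct in substance, but it takes a different route from the paper.

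\textbf{The two routes.} The paper stays inside the parabolic picture, in three steps. First, the map $\GD\times^{P_M}(m+\overline\cO+\nalg)\to\gD$ is proper, because its source is closed in $\GD/P_M\times\gD$. Second, the filtered inversion of Lemma~\ref{lem:unipotentorbit} works uniformly for $e\in\overline\cO$ and gives $N\times\overline\cO\cong m+\overline\cO+\nalg$. Hence the source is $\GD\times^M(m+\overline\cO)$ and its image is closed. Third, an explicit inverse comes from a Chinese-remainder polynomial $P$ with $P(y)=y_s$ on the image, which makes $y\mapsto y_s\in\GD\cdot m\cong\GD/M$ regular.

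You instead get closedness and the scheme structure in one step, from the fiber of the adjoint quotient through $m$ and Luna's slice theorem at the closed orbit $\GD\cdot m$. Your route uses heavier machinery, but it needs neither the parabolic nor the filtration lemma. It also gives more: $\chi^{-1}(\chi(m))\cong\GD\times^M(m+\mathcal N_{[\lalg,\lalg]})$ as schemes. The paper's route is elementary and simultaneously proves $\GD\times^{P_M}(m+\overline\cO+\nalg)\cong\GD\cdot(m+\overline\cO)$. That is the form Proposition~\ref{prop:closurecomparison} later invokes; from your version you would still need the uniform inversion $N\times\overline\cO\cong m+\overline\cO+\nalg$ to reach it.

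\textbf{Corrections.} Three statements need fixing.

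First, Luna's theorem does not make $\GD\times^M S\to U$ an isomorphism onto a saturated neighbourhood. It is strongly \'etale, $\GD\times^M S\cong U\times_{U\sslash\GD}(S\sslash M)$, and usually not injective. For example, in $\mathfrak{sl}_2$ with $M$ the diagonal torus and $S=\{\operatorname{diag}(t,-t):t\ne0\}$, the points $\pm t$ have the same image. The full preimage of $\GD\cdot m$ is then two copies of $\GD/M$. What the Cartesian square does give is the following. Take $S$ an $M$-saturated open subset of $m+\lalg$ containing $m$; it then contains the whole fiber $m+\mathcal N_{[\lalg,\lalg]}$ through $m$. The fiber of $\GD\times^M S$ over the image of $m$ in $S\sslash M$ is $\GD\times^M(m+\mathcal N_{[\lalg,\lalg]})$, and it maps isomorphically onto $\chi^{-1}(\chi(m))$. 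With that reading your argument goes through.

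Second, your claim that the variety is singular exactly along the zero section is false in general. The singular locus is $\GD\times^M\operatorname{Sing}(\overline\cO)$, which is larger whenever $\overline\cO\setminus\cO\ne\{0\}$. For instance, for the regular nilpotent orbit of an $\mathfrak{sl}_3$ factor, the closure is singular along the minimal orbit. The proposition only asserts singularity along $\GD\cdot m$, and your cone argument does give that.

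Third, the ideal step should read as in the paper. An $M$-stable linear subspace of $[\lalg,\lalg]$ is an ideal of a semisimple algebra, hence itself semisimple. So it cannot consist of nilpotent elements unless it is zero.
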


\begin{proof}
The morphism
\[
 \GD\times^{P_M}(m+\overline\cO+\nalg)\longrightarrow\gD
\]
is proper: its source is closed in $\GD/P_M\times\gD$. The inverse in Lemma~\ref{lem:unipotentorbit} works uniformly for $e\in\overline\cO$. On a nonzero $\ad(m)$-eigenspace, $(c\,\mathrm{id}+\ad(e))^{-1}$ is a finite geometric series with a uniform nilpotence bound. The polynomial inversion along the filtration therefore gives
\[
 N\times\overline\cO\xrightarrow{\sim}m+\overline\cO+\nalg,
 \qquad(n,e)\longmapsto\Ad(n)(m+e).
\]
The associated bundle is consequently $\GD\times^M(m+\overline\cO)$, with closed image $C_m=\GD\cdot(m+\overline\cO)$.

In a faithful representation of dimension $r$, the Chinese remainder theorem gives a polynomial $P(t)$ with $P(t)\equiv\lambda\pmod{(t-\lambda)^r}$ for each eigenvalue $\lambda$ of $m$. For $y\in C_m$, the nilpotent part on each generalized eigenspace has order at most $r$, so $P(y)=y_s$. Thus $y\mapsto y_s$ is a regular map to $\GD\cdot m\cong\GD/M$. Composing with $\GD/M\to\GD/P_M$ recovers the compatible parabolic reduction and gives the inverse to the proper morphism.

The orbit $\GD\cdot(m+e)$ is dense in $C_m$. Moreover, the principal $M$-bundle $\GD\to\GD/M$ gives the local product description with $\overline\cO$.

A nonzero nilpotent orbit closure is singular at zero. Indeed, an irreducible affine cone smooth at its vertex equals its tangent space there. This would make $\overline\cO$ a nonzero $M$-invariant vector subspace of $[\lalg,\lalg]$, hence an ideal consisting entirely of nilpotent elements. A semisimple Lie algebra has no such nonzero ideal.
\end{proof}

\subsection{Levi data inside a sheet}\label{subsec:restrictedfamilies}

If $\cO$ is nonrigid in $\lalg$, successive induction gives a proper Levi $\lalg_s\subset\lalg$ and a rigid orbit $\cO_s$ such that
\[
 \cO=\Ind_{\lalg_s}^{\lalg}(\cO_s),\qquad
 \zalg(\lalg)\subsetneq\zalg(\lalg_s).
\]
The Jordan-class closure relation gives
\[
 J_{\GD}(\lalg,\cO)
 \subset\overline{J_{\GD}(\lalg_s,\cO_s)}
\]
\cite{Borho,LS}. Moreover, the induction dimension formula gives equal orbit dimensions, so the first class lies in the sheet with dense class $J_{\GD}(\lalg_s,\cO_s)$. Its central parameter space has smaller dimension than the rank of that sheet.

\begin{example}[Two Levi data for $A_2$ in $\mathfrak e_6$]\label{ex:E6central}
The pair $(\mathfrak e_6,A_2)$ has zero-dimensional center, so its family consists of a single fiber. The same orbit $A_2$ is induced from zero in a Levi $\lalg_s$ of derived type $A_5$, with $\dim\lalg_s=35+1=36$. Thus $\dim A_2=78-36=42$, and it lies in the rank-one Dixmier sheet with datum $(\lalg_s,0)$ \cite[Section~6.3.1, Figure~6]{BD}. This second datum gives a one-parameter family with the same induced nilpotent orbit.
\end{example}

\section{Level Structures, Residues, and Symplectic Leaves}
\label{sec:level-leaves}

We start in this section by recalling the definition of a level structure on parahoric torsors from \cite{KZ} and the induced moment map construction on the moduli space of logahoric Higgs torsors. We then move on to study the symplectic leaves of this moduli space and its sheet loci.

Let $X$ be a smooth complex projective curve, let $D=\{x\}$, for a point $x \in X$; moreover, let $G$ be a connected complex semisimple group, and put $\g:=\Lie(G)$. Fix a maximal torus $T\subset G$ and a rational cocharacter $\theta\in Y(T)\otimes_{\mathbb Z}\mathbb Q$ of $T$, where $Y(T):=\operatorname{Hom}(\mathbb G_m,T)$. The cocharacter  determines a parahoric group scheme $\mathcal G_\theta$ (see \cite[Section 2.1]{KSZ}). On the formal disc, write $G_\theta$ for its parahoric subgroup, $G_\theta^+$ for the pro-unipotent radical and put respectively $\mathfrak g_\theta:=\Lie(G_\theta)$ and $\mathfrak g_\theta^+:=\Lie(G_\theta^+)$. Their reductive quotient and Lie algebra are respectively
\[
 L_\theta=G_\theta/G_\theta^+,
 \qquad \widehat{\mathfrak l}_\theta=\Lie(L_\theta).
\]

The effective level group and its Lie algebra are $\GD=L_\theta/Z(G)$ and $\gD=\Lie(L_\theta)$. The quotient $\rho_\theta:\mathfrak g_\theta\to\gD$ has kernel $\mathfrak g_\theta^+=\Lie(G_\theta^+)$, where $\mathfrak g_\theta=\Lie(G_\theta)$; further details appear in Appendix~\ref{sec:parahoric-formulas}. The hyperspecial case considered here however has $\theta=0$ and $G_\theta=G(\C[[z]])$; then $\rho_\theta$ is evaluation at $z=0$. Lastly, a fixed nondegenerate invariant symmetric form $B_\g$ on the Lie algebra $\mathfrak{g}$ restricts to $B_{\gD}$, which identifies $\gD\cong\gD^*$.

\subsection{Torsors and level structures}

A $\mathcal G_\theta$-torsor $\mathcal E$ is an \'{e}tale locally trivial principal bundle for $\mathcal G_\theta$. Its adjoint sheaf $\mathcal E(\g)$ consists of sections associated to the adjoint representation. It has local lattice $\mathfrak g_\theta$ at $x$ and is the usual adjoint bundle elsewhere. A \emph{logarithmic Higgs field} is a section
\[
 \varphi\in H^0\bigl(X,\mathcal E(\g)\otimes K_X(D)\bigr),
\]
where $K_X(D)$ is the line bundle of one-forms with at most a simple pole at $x$; locally $\varphi=A(z)\,dz/z$ with $A(z)\in\mathfrak g_\theta$. The pair $(\mathcal E,\varphi)$ is called a \emph{logahoric $\mathcal G_\theta$-Higgs torsor}.

A \emph{level structure} $\eta$ on a parahoric $\mathcal G_\theta$-torsor is defined as a reduction of the local $G_\theta$-torsor to $G_\theta^+$, equivalently a trivialization of its $L_\theta$-torsor \cite[Definition~3.1]{KZ}. It identifies the residue with an element of $\gD$; forgetting $\eta$ retains its conjugacy class.

For a proper parabolic $Q\subset G$ containing $T$, the subgroup scheme $\mathcal Q_\theta\subset\mathcal G_\theta$ has formal-disc group $Q(\C((z)))\cap G_\theta$ and fiber $Q$ away from $x$. A reduction $\sigma:X\to\mathcal E/\mathcal Q_\theta$ gives a $\mathcal Q_\theta$-torsor $\mathcal E_\sigma$. A character $\chi:Q\to\mathbb G_m$ is antidominant if it has nonpositive pairings with the simple coroots for a Borel in $Q$. The associated line bundle is $L_{\sigma,\chi}=\mathcal E_\sigma\times^{\mathcal Q_\theta}\C_\chi$, with $\C_\chi$ the one-dimensional representation of character $\chi$. Ramanathan stability means
\[
 \deg L_{\sigma,\chi}+\langle\theta,\chi\rangle>0
\]
for every reduction and every nontrivial antidominant character trivial on $Z(G)$. The pairing $\langle\theta,\chi\rangle$ evaluates the character on the rational cocharacter $\theta$; semistability amounts to replacing $>$ by $\geq$ above.

The stability condition for a Higgs pair $(\mathcal{E}, \varphi)$ tests the reductions preserved by $\varphi$, that is, those reductions for which $\varphi$ takes values in $\mathcal E_\sigma(\Lie\mathcal Q_\theta)\otimes K_X(D)$ \cite[Definitions~4.2--4.5]{KSZ}. Thus, stable underlying parahoric $\mathcal G_\theta$-torsors give stable logahoric $\mathcal G_\theta$-Higgs pairs.

We use $\mathcal M_{LH}(X,\mathcal G_\theta)$ for the coarse moduli space of leveled triples $(\mathcal E,\varphi,\eta)$ with stable $\mathcal E$, and $\mathcal M_H(X,\mathcal G_\theta)$ for that of semistable logahoric $\mathcal G_\theta$-Higgs pairs, as in \cite[Sections~5.4--5.5]{KZ}. We also consider the morphism $\ell$ that forgets $\eta$. A $\mathcal G_\theta$-torsor $\mathcal E$ is called \emph{regularly stable} if it is stable and $\operatorname{Aut}(\mathcal E)=Z(G)$.

For simply connected $G$ and genus $g_X\geq3$, the regularly stable locus is nonempty: Parameswaran--Pandey prove that its complement in the torsor stack has codimension at least two \cite[Theorem~7.2.1]{ParameswaranPandey2023}.

The base $\cB$ is a fixed nonempty connected component of the regularly stable torsor moduli space. The space $\cU$ parametrizes its leveled torsors, and the projection
\[
 \pi:\cU\longrightarrow\cB
\]
forgets the level. Right multiplication changes the level; central automorphisms identify choices differing by $Z(G)$, giving the effective right action of $\GD$.

\begin{proposition}\label{prop:levelbundle}\label{rem:whyregularstable}
The $\GD$-action on $\cU$ is free, and $\pi:\cU\to\cB$ is a $\GD$-torsor in the \'{e}tale topology. The spaces $\cB$ and $\cU$ are smooth, and $\dim\cU=\dim\cB+\dim\GD$.
\end{proposition}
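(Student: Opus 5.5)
The plan is to realize $\cU$ as the moduli of leveled torsors with $\cB$ as the quotient, and to reduce each assertion to the statement that automorphisms of regularly stable torsors are central.

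\textbf{Freeness.} A level $\eta$ is a trivialization of the $L_\theta$-torsor $\mathcal E_x/G_\theta^+$ at $x$. The group $L_\theta$ acts on levels on the right, simply transitively on the set of levels of a fixed $\mathcal E$. Suppose $g\in L_\theta$ fixes the point $[(\mathcal E,\eta)]$ of $\cU$. Then there is an automorphism $\alpha$ of $\mathcal E$ with $\alpha\cdot\eta=\eta g$. Regular stability gives $\alpha\in Z(G)$, and $\alpha$ acts on levels through the image of $Z(G)\to L_\theta$. Hence $g\in Z(G)$, and $g$ is trivial in $\GD=L_\theta/Z(G)$. So the action is free, and each fiber of $\pi$ is a single $\GD$-orbit.

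\textbf{Torsor structure and smoothness.} On the regularly stable locus, the moduli stack of $\mathcal G_\theta$-torsors is a $Z(G)$-gerbe over its coarse space $\cB$. Deformation theory shows that $\cB$ is smooth of dimension $\dim H^1(X,\mathcal E(\g))$. This uses:
\begin{itemize}[label={}]
\item $H^0(X,\mathcal E(\g))=0$, since the infinitesimal automorphisms are $\Lie Z(G)=0$ for semisimple $G$;
\item the vanishing of obstructions in $H^2$ on a curve.
\end{itemize}

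The stack of leveled torsors is the relative frame bundle of the $L_\theta$-torsor $\mathcal E_x/G_\theta^+$ over this stack. It is therefore an $L_\theta$-torsor over the stack, hence smooth. In it, $Z(G)$ acts simultaneously as automorphisms and through $L_\theta$, so after rigidifying by $Z(G)$ the stack becomes an algebraic space $\cU$ carrying a free $\GD$-action.

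For étale local triviality, choose an étale cover $V\to\cB$ over which a universal torsor exists. A Luna-slice or gerbe-splitting argument provides this, and it is where the hypotheses on $\cB$ enter. The universal $L_\theta$-torsor at $x$ then pulls back to $V$, and its frame bundle identifies $\cU\times_\cB V\cong V\times\GD$, compatibly with the action. Local triviality then yields the smoothness of $\cU$ and the formula $\dim\cU=\dim\cB+\dim\GD$.

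\textbf{Main obstacle.} The delicate step is descending from the stack to coarse spaces. One must check that the $Z(G)$-gerbe structure is exactly cancelled by passing from $L_\theta$ to $\GD$, so that $\cU$ is a genuine space and not a gerbe. I would handle this by comparing the inertia groups directly. At a point of the leveled stack, the automorphism group is $\{\alpha\in Z(G):\alpha\cdot\eta=\eta\}$. Since $Z(G)\to L_\theta$ is injective (the center acts faithfully on the special fiber), this group is trivial. Hence the leveled stack is already an algebraic space. The $\GD$-action on it is free by the first step, and its quotient is the rigidification, which is $\cB$.
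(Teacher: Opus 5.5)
Your proof is correct and follows essentially the same route as the paper. The shared steps are freeness from $\operatorname{Aut}(\mathcal E)=Z(G)$ acting on levels through the injective map $Z(G)\to L_\theta$, \'etale-local universal torsors giving $\cU|_V\cong V\times\GD$, and smoothness from $H^0=H^2=0$ with the central stabilizer acting trivially on deformations. Your inertia computation on the leveled stack is a stack-theoretic version of the paper's remark that the central ambiguity disappears in $\GD$.

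One phrase needs tidying. Your earlier remark about ``rigidifying'' the leveled stack by $Z(G)$ is unnecessary, and not quite meaningful, since $Z(G)$ does not lie in that stack's inertia; your final paragraph already shows the stack is an algebraic space as it stands.
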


\begin{proof}
The levels of $\mathcal E$ form an $L_\theta$-torsor, and their isomorphisms come from $\operatorname{Aut}(\mathcal E)=Z(G)$. Thus $\GD$ acts freely \cite[Theorem~5.14]{KZ}. The regularly stable stack has constant stabilizer $Z(G)$, which is finite \'{e}tale since $G$ is semisimple. It therefore admits universal torsors \'{e}tale locally over $\cB$. Their reductive frames modulo central automorphisms form a $\GD$-torsor. Changes of universal torsor identify these constructions on overlaps; the central ambiguity disappears in $\GD$, so they descend to $\pi$.

Deformations and obstructions are given by the cohomology groups $H^1(X,\mathcal E(\g))$ and $H^2(X,\mathcal E(\g))=0$. Thus, the torsor stack is smooth, with no infinitesimal stabilizers because $\Lie Z(G)=0$. Its finite central stabilizer acts trivially on the adjoint deformation space. The \'{e}tale local coarse quotient is therefore smooth. Since $\cU$ is locally $\cB\times\GD$, it is smooth of dimension $\dim\cB+\dim\GD$.
\end{proof}

\subsection{The deformation sequence and cotangent vectors}

The sheaf $\mathcal E(\g)_\eta$ consists of infinitesimal automorphisms preserving $\eta$. They have zero reductive value at $x$, so its local lattice is $\mathfrak g_\theta^+$. We therefore have
\begin{equation}\label{eq:levelsequence}
 0\longrightarrow\mathcal E(\g)_\eta
 \longrightarrow\mathcal E(\g)
 \longrightarrow i_{x*}\gD\longrightarrow0,
\end{equation}
where $i_x:\{x\}\hookrightarrow X$, and $\mathcal{E}(\mathfrak{g})_{\eta} \subset \mathcal{E}(\mathfrak{g})$ denotes the subsheaf whose sections preserve the parahoric structure at $x$.

By \cite[Proposition~4.1 and Section~5.4]{KZ}, deformation theory identifies
\[
 T_{(\mathcal E,\eta)}\cU
 =H^1(X,\mathcal E(\g)_\eta),
 \qquad
 T_{\mathcal E}\cB=H^1(X,\mathcal E(\g)).
\]
Regular stability gives $H^0=0$ for both sheaves, so \eqref{eq:levelsequence} induces
\[
 0\longrightarrow\gD
 \xrightarrow{\ \delta_{\mathcal E}\ }
 H^1(X,\mathcal E(\g)_\eta)
 \longrightarrow H^1(X,\mathcal E(\g))\longrightarrow0.
\]
Here $\delta_{\mathcal E}$ is the infinitesimal right action, giving the tangent sequence of $\pi$.

Lemma~\ref{lem:parahoricpairing} gives $\mathcal E(\g)_\eta^\vee\otimes K_X\cong\mathcal E(\g)\otimes K_X(D)$. Serre duality therefore identifies the dual of the tangent space with the logarithmic Higgs fields:
\[
 T^*_{(\mathcal E,\eta)}\cU
 =H^0\bigl(X,\mathcal E(\g)\otimes K_X(D)\bigr).
\]
For $\mathcal E(\g)_\eta$, the vanishing of $H^0$ and $H^2$ makes $H^1$ compatible with base change. Relative Serre duality therefore gives the identification in families, yielding the open subspace
\[
 \cX=T^*\cU=\mathcal M_{LH}(X,\mathcal G_\theta)|_{\cB},
\]
where $|_{\cB}$ means that the underlying torsor lies in $\cB$.

On a cotangent bundle $p:T^*W\to W$, the tautological one-form evaluates a tangent vector $v$ by $\lambda_{(w,\alpha)}(v)=\alpha(dp(v))$. The canonical symplectic form is $\omega=-d\lambda$. For a right group action, the cotangent moment map is $\langle\mu(w,\alpha),a\rangle=\alpha(a^\#_w)$, where $a^\#_w=\left.\frac{d}{dt}\right|_{t=0}w\exp(ta)$.

For $W=\cU$, the canonical form is $\omega_{LH}=-d\lambda_{\cU}$. The lifted action is \emph{Hamiltonian}, meaning $\iota_{A^\#}\omega_{LH}=d\langle\mu,A\rangle$.

\subsection{Residue, coresidue, and the moment map}

In a local frame compatible with $\eta$, the Higgs field has the expression $\varphi=A(z)\,dz/z$. Its \emph{reductive residue}, also called its Levi residue, is
\[
 \Res_x(\varphi)=\rho_\theta(A(z))\in\gD.
\]
Changing $\eta$ conjugates this element. Pairing the residue with $B_{\gD}$ converts it to a covector, its \emph{coresidue}:
\[
 \CoRes_x(\varphi)=B_{\gD}\bigl(\Res_x(\varphi),-\bigr)\in\gD^*.
\]

\subsection{Local charts for cotangent quotients}

Proposition~\ref{prop:cotangentquotientcharts} in Appendix~\ref{sec:cotangentquotients} constructs the quotient of $T^*\cU$ and its restrictions to residue conditions. The local formula below describes these quotients on a chart trivializing the level bundle.

A \emph{Poisson bracket} is a skew, $\C$-bilinear bracket on functions satisfying the Jacobi and Leibniz identities. On a smooth space $Z$, it defines a bivector $\Pi$ by $\Pi(df,dg)=\{f,g\}$ and a map $\Pi^\#:T^*Z\to TZ$ by $(\Pi^\#df)(g)=\{f,g\}$. The vector field $\Pi^\#df$ is the Hamiltonian vector field of $f$. A morphism is Poisson if pullback preserves brackets.

A \emph{symplectic leaf} is a maximal connected immersed manifold whose tangent spaces are $\operatorname{im}\Pi^\#$. The bracket induces a nondegenerate symplectic form on each leaf. We have the following result from our previous work \cite{KZ}:

\begin{theorem}[Coresidue moment map]\label{thm:coresidue}\label{rem:momentlocal}
The cotangent lift of the right $\GD$-action on $\cU$ is Hamiltonian, with moment map
\[
 \mu:\cX\longrightarrow\gD^*,\qquad
 \mu(\mathcal E,\varphi,\eta)=\CoRes_x(\varphi).
\]
The forgetful morphism restricts to the quotient
\[
 \ell:\cX\longrightarrow\cP=\cX/\GD
 =\mathcal M_H(X,\mathcal G_\theta)|_{\cB}.
\]
The induced Poisson bracket on $\cP$ is the restriction of the bracket in \cite[Theorem~5.23]{KZ}.
\end{theorem}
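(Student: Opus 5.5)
The strategy is to reduce the statement to two facts about cotangent bundles: the right $\GD$-action on $\cU$ has a standard cotangent moment map, and under Serre duality that map is the coresidue. By Proposition~\ref{prop:levelbundle}, $\cU\to\cB$ is a free étale $\GD$-torsor. The general cotangent moment map $\langle\mu(w,\alpha),a\rangle=\alpha(a^\#_w)$ then satisfies $\iota_{a^\#}\omega=d\langle\mu,a\rangle$ for $\omega=-d\lambda$. This follows from $\mathcal L_{a^\#}\lambda=0$ (the lifted action preserves the tautological form) together with Cartan's formula. Equivariance under the coadjoint action is a direct check from $\lambda$ being invariant. So the only substantive task is to identify $\alpha(a^\#)$ with $B_{\gD}(\Res_x\varphi,a)$.

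To make that identification, I would use the tangent sequence. The infinitesimal action vector $a^\#$ at $(\mathcal E,\eta)$ is $\delta_{\mathcal E}(a)\in H^1(X,\mathcal E(\g)_\eta)$, the connecting map of \eqref{eq:levelsequence}. A cotangent vector is $\varphi\in H^0(X,\mathcal E(\g)\otimes K_X(D))$, and it pairs with $H^1(\mathcal E(\g)_\eta)$ through Serre duality and the pairing of Lemma~\ref{lem:parahoricpairing}. I would compute $\langle\varphi,\delta_{\mathcal E}(a)\rangle$ in Čech terms. Cover $X$ by a small disc $U_x$ around $x$ and $X\setminus\{x\}$, lift $a$ to a local section $\tilde a$ of $\mathcal E(\g)$ on $U_x$, and represent $\delta(a)$ by the cocycle $\tilde a$ on the punctured overlap. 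The Serre pairing of a $K_X$-valued cocycle is then the sum of residues, so
\[
 \langle\varphi,\delta(a)\rangle=\Res_x B_\g(\tilde a,\varphi)=\Res_x B_\g\bigl(\tilde a(z),A(z)\bigr)\frac{dz}{z}.
\]
Only the constant terms contribute, which gives $B_\g(\tilde a(0),A(0))$. In the hyperspecial case this is exactly $B_{\gD}(a,\Res_x\varphi)$. In the general parahoric case I would use that $\mathfrak g_\theta^+$ pairs to zero with $\mathfrak g_\theta$ modulo the appropriate filtration step. Concretely, the residue pairing descends to $\gD\times\gD$ via $\rho_\theta$ and restricts to $B_{\gD}$; this is the content of Lemma~\ref{lem:parahoricpairing} and Appendix~\ref{sec:parahoric-formulas}. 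The overall sign is fixed by the convention for $\delta$ and for $\omega=-d\lambda$.

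Next come the quotient statements. The action of $\GD$ on $\cX=T^*\cU$ is free, because it covers a free action. The quotient $\cX/\GD$ therefore exists as a smooth Poisson space; locally it is $T^*\cB\times\gD^*$ over a trivializing chart, as in Proposition~\ref{prop:cotangentquotientcharts}. Forgetting $\eta$ identifies it with the logahoric Higgs pairs whose underlying torsor lies in $\cB$. These pairs are stable because the torsor is stable. Injectivity holds because two levels on the same torsor differ by a unique element of $\GD$, since $\operatorname{Aut}\mathcal E=Z(G)$. Finally, the bracket comparison with \cite[Theorem~5.23]{KZ} is by construction: both brackets are the reduction of the canonical bracket on $T^*\cU$ to $\GD$-invariant functions.

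I expect the main obstacle to be the parahoric residue pairing. Specifically, one must check that the Serre pairing between $H^0(\mathcal E(\g)\otimes K(D))$ and $H^1(\mathcal E(\g)_\eta)$, restricted to the image of $\delta$, depends only on $\rho_\theta$ of both entries and is nondegenerate there. That step needs the explicit lattice duality $(\mathfrak g_\theta^+)^\perp=\mathfrak g_\theta$ (up to the $dz/z$ twist). I would verify it root space by root space, using the $\theta$-grading.
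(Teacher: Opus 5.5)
Your proposal is correct and follows essentially the same route as the paper. You identify $\langle\mu,A\rangle$ with the Serre pairing $\varphi(\delta_{\mathcal E}(A))$, represent $\delta_{\mathcal E}(A)$ by a local lift on the disc against the zero lift elsewhere, and read off $B_{\gD}(\Res_x\varphi,A)$; you then obtain $\cP$ and its bracket from the cotangent quotient charts, and your root-by-root check via Lemma~\ref{lem:parahoricpairing} that the residue pairing descends through $\rho_\theta$ makes explicit what the paper cites from \cite[Theorem~1.3]{KZ}.
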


\begin{proof}
Under Serre duality, $\langle\mu,A\rangle=\varphi(\delta_{\mathcal E}(A))$. A local lift $\widetilde A$ of $A$ on the disc, together with the zero lift away from $x$, represents $\delta_{\mathcal E}(A)$ by a \v{C}ech cocycle on their overlap. Serre duality pairs this cocycle with $\varphi$ by taking the residue of $B_\g(\varphi,\widetilde A)$, namely $B_{\gD}(\Res_x\varphi,A)$ \cite[Theorem~1.3]{KZ}. The cotangent moment map is therefore the coresidue.

Proposition~\ref{prop:cotangentquotientcharts} constructs $\cP$, whose points are the pairs $(\mathcal E,\varphi)$ obtained by forgetting $\eta$. The symplectic bracket preserves invariant functions and descends to the quotient, where it agrees with \cite[Theorem~5.23]{KZ} on the cotangent locus.
\end{proof}

\begin{lemma}[Smooth surjectivity]\label{lem:musubmersion}
The cotangent projection is $p:T^*\cU\to\cU$. The morphism
\[
 (p,\mu):T^*\cU\longrightarrow\cU\times\gD^*
\]
is smooth and surjective. Over $(u,\xi)$ its fiber is a nonempty affine space of dimension $\dim\cU-\dim\GD$. In particular $\mu$ is smooth and surjective.
\end{lemma}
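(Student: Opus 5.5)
The argument is fiberwise linear algebra on the cotangent bundle, globalized using the torsor structure from Proposition~\ref{prop:levelbundle}. Fix $u=(\mathcal E,\eta)\in\cU$. By definition $\langle\mu(u,\alpha),a\rangle=\alpha(a^\#_u)$, so $\mu$ restricted to the fiber $T^*_u\cU$ is the linear map
\[
 T^*_u\cU\longrightarrow\gD^*,\qquad \alpha\longmapsto \alpha\circ\delta_{\mathcal E},
\]
the transpose of the infinitesimal action $\delta_{\mathcal E}:\gD\to T_u\cU$.

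The tangent sequence of $\pi$ shows that $\delta_{\mathcal E}$ is injective, because the $\GD$-action is free with finite, indeed trivial, stabilizers and $\Lie Z(G)=0$. Its transpose is therefore a surjective linear map. Its kernel is the annihilator of $\delta_{\mathcal E}(\gD)$, which is $(T_{\mathcal E}\cB)^*=\pi^*T^*\cB$, of dimension $\dim\cU-\dim\GD=\dim\cB$. Consequently the fiber of $(p,\mu)$ over $(u,\xi)$ is an affine space: a translate of this kernel by any preimage of $\xi$. In particular it is nonempty.

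For the global statements, dualize the exact sequence of vector bundles on $\cU$,
\[
 0\to\cU\times\gD\to T\cU\to\pi^*T\cB\to0.
\]
This gives a surjection of vector bundles $T^*\cU\to\cU\times\gD^*$ with kernel $\pi^*T^*\cB$, and this surjection is precisely $(p,\mu)$. The first sequence is exact as bundles, not just pointwise, since $\pi$ is smooth by Proposition~\ref{prop:levelbundle}. A surjective morphism of vector bundles over the smooth base $\cU$ is smooth: locally, after splitting the sequence, it is a linear projection $\cU\times\C^{a+b}\to\cU\times\C^{a}$.

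To be more concrete, one can use an étale local trivialization $\cU|_V\cong V\times\GD$ together with a connection splitting. In such a chart $T^*\cU\cong T^*V\times\GD\times\gD^*$, and $\mu$ becomes $\Ad^*$ of the last coordinate, or the last coordinate itself after right trivialization. Smoothness and surjectivity are evident in this form.

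Finally, $\mu$ is the composite of $(p,\mu)$ with the projection $\cU\times\gD^*\to\gD^*$, which is smooth and surjective because $\cU$ is smooth and nonempty. Hence $\mu$ is smooth and surjective.

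The only delicate point is making sure the identification of $(p,\mu)$ with the dual bundle map is valid scheme-theoretically rather than just on points. This follows from the base-change compatibility of $H^1(X,\mathcal E(\g)_\eta)$ already noted in the text, which makes the tangent sequence an exact sequence of locally free sheaves.
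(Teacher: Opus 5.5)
Your proposal is correct and is essentially the paper's argument: freeness makes the infinitesimal action $\cU\times\gD\to T\cU$ a vector-bundle injection, so its transpose $(p,\mu)$ is a surjective bundle map whose fibers are affine spaces modeled on $\pi^*T^*\cB$, of dimension $\dim\cU-\dim\GD$, and composing with the smooth surjective projection $\cU\times\gD^*\to\gD^*$ gives the statement for $\mu$. The scheme-theoretic point you flag as delicate needs no base-change input, since the formula $\langle\mu(u,\alpha),a\rangle=\alpha(a^\#_u)$ already defines $(p,\mu)$ as the transpose of the action map.
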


\begin{proof}
Freeness of the action makes the infinitesimal action a vector-bundle injection $a:\cU\times\gD\to T\cU$. Dualizing therefore gives the surjective bundle map $(p,\mu)$, with affine fibers whose translation space is $\ker(a_u^*)$, of dimension $\dim\cU-\dim\GD$. This map and the projection $\cU\times\gD^*\to\gD^*$ are smooth and surjective, so their composition $\mu$ is too.
\end{proof}

\begin{example}[Local cotangent coordinates]\label{ex:localcotangentmodel}
A level over an \'{e}tale chart $V\to\cB$ gives
\[
 \cP|_V\cong T^*V\times\gD^*.
\]
Its coordinates are the cotangent vector to the torsor moduli and the residue in that level. The transitions are those of Proposition~\ref{prop:cotangentquotientcharts}; an orbit reduction is locally $T^*V\times\mathscr O$.
\end{example}

For the full residue space $\gD^*$, these charts are Poisson isomorphisms with the product of the cotangent and Lie--Poisson structures. Indeed, in the coordinates of Appendix~\ref{sec:cotangentquotients}, with $h\in\GD$ and $A,B\in\gD$, the form computed from $\alpha+\langle\zeta,dh\,h^{-1}\rangle$ gives the cotangent brackets, $\{\langle\zeta,A\rangle,\langle\zeta,B\rangle\}=\langle\zeta,[A,B]\rangle$, and zero mixed brackets.

\subsection{Orbit reductions and all symplectic leaves}

By the vector $A^\#_\xi$ we will denote the derivative of $\Ad_{\exp(tA)}^*\xi$ at $t=0$. The Kirillov--Kostant--Souriau (KKS) form on a coadjoint orbit $\mathscr O\subset\gD^*$ is then defined by
\[
 (\omega_{\mathscr O})_\xi(A^\#_\xi,B^\#_\xi)
 =\langle\xi,[A,B]\rangle.
\]
This is the orbit symplectic form for the Lie--Poisson structure \cite{Kirillov}; right equivariance also reads 
\[\mu(p\cdot g)=\Ad_{g^{-1}}^*\mu(p).\]

\begin{theorem}[Symplectic leaves of the logahoric quotient]\label{thm:allleaves}
For every coadjoint orbit $\mathscr O\subset\gD^*$, the geometric quotient
\[
 \cL_{\mathscr O}=\mu^{-1}(\mathscr O)/\GD
\]
exists, is nonempty, smooth, and connected, and is a symplectic leaf of $\cP$. These are all the symplectic leaves. If $i:\mu^{-1}(\mathscr O)\hookrightarrow\cX$ is the inclusion and $q:\mu^{-1}(\mathscr O)\to\cL_{\mathscr O}$ is the quotient map, its symplectic form is characterized by
\begin{equation}\label{eq:reducedform}
 q^*\Omega_{\mathscr O}
 =i^*\omega_{LH}+\mu^*\omega_{\mathscr O}.
\end{equation}
Moreover,
\begin{equation}\label{eq:leafdimension}
 \dim\cL_{\mathscr O}=2\dim\cB+\dim\mathscr O.
\end{equation}
\end{theorem}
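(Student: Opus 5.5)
The plan is to reduce everything to the local product charts of Example~\ref{ex:localcotangentmodel} and then apply standard Marsden--Weinstein reduction for a free action.

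\emph{Nonemptiness, smoothness and dimension.} By Lemma~\ref{lem:musubmersion}, $\mu$ is smooth and surjective. Hence $\mu^{-1}(\mathscr O)$ is a nonempty smooth subvariety of $\cX$ of dimension
\[
 \dim\cX-\dim\gD+\dim\mathscr O=2\dim\cB+\dim\GD+\dim\mathscr O,
\]
using $\dim\cU=\dim\cB+\dim\GD$ from Proposition~\ref{prop:levelbundle}. The free $\GD$-action on $\cU$ lifts to a free action on $\mu^{-1}(\mathscr O)$. Over an \'etale chart $V\to\cB$ trivializing $\pi$, I would write $T^*\cU|_V\cong T^*V\times T^*\GD$ and use the left trivialization $T^*\GD\cong\GD\times\gD^*$. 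In these coordinates $\mu^{-1}(\mathscr O)\cong T^*V\times\GD\times\mathscr O$, with $\GD$ acting freely on the middle factor.

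The quotient is therefore locally $T^*V\times\mathscr O$. These local quotients glue via Proposition~\ref{prop:cotangentquotientcharts} to a smooth geometric quotient. Its dimension is $2\dim\cB+\dim\mathscr O$, which is \eqref{eq:leafdimension}.

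\emph{Connectedness.} $\cB$ is connected and $\mathscr O$ is connected, being the orbit of the connected group $\GD$. The map $\cL_{\mathscr O}\to\cB$ is then a smooth surjection whose fibers are affine bundles over $\mathscr O$, hence connected. I would argue that a smooth surjective map with connected fibers onto a connected base has connected total space, since it is open.

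\emph{The reduced form.} I would verify \eqref{eq:reducedform} by the standard orbit-reduction argument (Marsden--Weinstein for orbits, or shifting trick). Because $\mu$ is equivariant, the form
\[
 i^*\omega_{LH}+\mu^*\omega_{\mathscr O}
\]
is $\GD$-invariant. It is basic because $\iota_{A^\#}\omega_{LH}=d\langle\mu,A\rangle$ is cancelled by the KKS term. Its kernel is exactly the orbit directions, computed pointwise as in the Marsden--Weinstein lemma $\ker=T(\text{orbit})$ using $\ker d\mu^{\perp}=\operatorname{im}$ of the infinitesimal action. So it descends to a symplectic form $\Omega_{\mathscr O}$.

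\emph{Leaves, and the main obstacle.} The main obstacle is showing these are exactly the symplectic leaves. In the chart, the Poisson structure on $\cP|_V\cong T^*V\times\gD^*$ is the product of the symplectic structure and the Lie--Poisson structure, by the remark after Example~\ref{ex:localcotangentmodel}. The image of $\Pi^\#$ at a point is therefore $T(T^*V)\oplus T_\xi\mathscr O$. So locally the leaves are the connected components of $T^*V\times\mathscr O$, and the reduced form agrees with the product of the canonical and KKS forms, which is the form the bracket induces.

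Globally, the leaf through a point is the maximal connected integral manifold of $\operatorname{im}\Pi^\#$. Each $\cL_{\mathscr O}$ is an integral manifold of $\operatorname{im}\Pi^\#$; it is connected, and it is a union of $\ell$-images closed under the distribution. Hence it is a leaf. The sets $\cL_{\mathscr O}$ partition $\cP$, since $\mu$ is surjective and a point's residue orbit is well defined, so they exhaust all the leaves.

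The delicate points are two: checking that the chart transitions of Proposition~\ref{prop:cotangentquotientcharts} preserve the orbit condition (they act on $\gD^*$ by coadjoint action), and checking that the descended form matches the Poisson-induced one. I would verify the latter in the chart by comparing brackets of coordinate functions.
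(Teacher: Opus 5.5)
Your proposal is correct and follows essentially the same route as the paper: the product charts $T^*V\times\GD\times\mathscr O$ of Proposition~\ref{prop:cotangentquotientcharts} give existence, smoothness and the dimension count; the cancellation of contractions together with $\ker(d\mu_p)=V_p^{\omega_{LH}}$ gives the nondegenerate descended form; and the product Poisson structure on $T^*V\times\gD^*$ shows that the orbit reductions are exactly the leaves. The only difference is in connectedness, which you obtain from the open surjection $\cL_{\mathscr O}\to\cB$ with connected fibers, whereas the paper notes directly that $\mu^{-1}(\mathscr O)$ is an affine bundle over the connected space $\cU\times\mathscr O$.
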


\begin{proof}
Lemma~\ref{lem:musubmersion} makes $\mu^{-1}(\mathscr O)$ smooth and nonempty. It is an affine bundle over $\cU\times\mathscr O$, which is connected because $\cB$ and $\GD$ are connected. Its quotient is therefore connected. Proposition~\ref{prop:cotangentquotientcharts} gives the smooth quotient charts $T^*V\times\mathscr O$ and the principal $\GD$-bundle $q$.

A fixed $\xi\in\mathscr O$ has stabilizer $\GD_\xi$. Equivariance gives
\[
 \mu^{-1}(\xi)\times^{\GD_\xi}\GD
 \longrightarrow\mu^{-1}(\mathscr O),\qquad[p,g]\longmapsto p\cdot g,
\]
where $(p,g)\sim(p\cdot k,k^{-1}g)$, for $k\in\GD_\xi$. After the faithfully flat base change $\GD\to\mathscr O$, with $g\mapsto\Ad_{g^{-1}}^*\xi$, this map becomes the identity on $\mu^{-1}(\xi)\times\GD$, hence is an isomorphism by descent. Consequently, quotienting by the residual right action gives $\cL_{\mathscr O}\cong\mu^{-1}(\xi)/\GD_\xi$.

The closed invariant form $\beta=i^*\omega_{LH}+\mu^*\omega_{\mathscr O}$ satisfies
\[
 \iota_{A^\#}i^*\omega_{LH}=d\langle\mu,A\rangle,
 \qquad
 \iota_{A^\#}\mu^*\omega_{\mathscr O}=-d\langle\mu,A\rangle,
\]
by the moment identity and right equivariance: $d\mu(A^\#)$ is the negative of the coadjoint fundamental vector at $\mu(p)$. The contractions cancel, so $\beta$ vanishes along the fibers of $q$. Its invariance then makes it descend as in \eqref{eq:reducedform}.

At $p\in\mu^{-1}(\mathscr O)$, the tangent space to the group orbit is $V_p=T_p(p\cdot\GD)$. The moment identity gives $\ker(d\mu_p)=V_p^{\omega_{LH}}$, and the preceding contractions give $V_p\subset\ker\beta_p$. If $v\in\ker\beta_p$, then $\mu^*\omega_{\mathscr O}(v,w)=0$, for every $w\in\ker(d\mu_p)$. Thus $\omega_{LH}(v,w)=0$ for all such $w$, so
\[
 v\in(V_p^{\omega_{LH}})^{\omega_{LH}}=V_p.
\]
Hence $\ker\beta_p=V_p$ and the descended form is nondegenerate.

In the Poisson charts $T^*V\times\gD^*$, Hamiltonian vector fields span exactly $T(T^*V\times\mathscr O)$. Transitions conjugate the residue, so these products glue to $\cL_{\mathscr O}$ and every leaf stays in one orbit reduction. Thus each connected reduction has the full tangent distribution and hence is one leaf. The orbit reductions cover $\cP$.

Finally, smoothness of $\mu$ and freeness of the action give
\[
 \dim\cL_{\mathscr O}
 =2\dim\cU-(\dim\GD-\dim\mathscr O)-\dim\GD
 =2\dim\cB+\dim\mathscr O.
\]
\end{proof}

\subsection{From sheets to loci of fixed Poisson rank}

The Poisson tensor on $\cP$ is denoted by $\Pi$. Its rank at $y$ is the rank of $\Pi_y^\#:T_y^*\cP\to T_y\cP$.

\begin{lemma}[Irreducibility from open fibers]\label{lem:openirreducible}
The map $f:X\to S$ is an open surjection of topological spaces. If $S$ and every fiber $f^{-1}(s)$ are irreducible, then $X$ is irreducible.
\end{lemma}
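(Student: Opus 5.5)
The argument is by contradiction, using the definition of irreducibility directly. Assume $X=Z_1\cup Z_2$ with $Z_1,Z_2$ closed. We will show that $X=Z_1$ or $X=Z_2$; nonemptiness of $X$ is immediate, because $S$ is nonempty and $f$ is surjective.

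First we reduce to a statement on $S$. For each $s\in S$ the fiber $f^{-1}(s)$ is irreducible and is covered by the two closed sets $Z_i\cap f^{-1}(s)$, so it lies in one of them. Set
\[
 S_i=\{s\in S: f^{-1}(s)\subset Z_i\},\qquad i=1,2.
\]
Then $S=S_1\cup S_2$. If $S_1$ and $S_2$ are closed, irreducibility of $S$ forces $S=S_i$ for some $i$, and then $X=f^{-1}(S)\subset Z_i$.

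The main step is to show that each $S_i$ is closed, and this is where openness of $f$ enters. The complement is
\[
 S\setminus S_i=\{s:f^{-1}(s)\not\subset Z_i\}=f(X\setminus Z_i).
\]
It is the image of the open set $X\setminus Z_i$ under the open map $f$, hence open, so $S_i$ is closed. This finishes the argument. The only point that needs care is the convention that irreducible spaces are nonempty. Surjectivity guarantees that every fiber is nonempty, so the dichotomy "the fiber lies in $Z_1$ or in $Z_2$" is genuinely forced. I do not expect any further obstacle.
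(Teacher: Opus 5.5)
Your proof is correct and is the paper's argument written with closed sets instead of open sets. The paper shows that two nonempty open sets $U,V\subset X$ meet by picking $s\in f(U)\cap f(V)$ and using irreducibility of $f^{-1}(s)$; you take $U=X\setminus Z_1$ and $V=X\setminus Z_2$, note that $f(U)=S\setminus S_1$ and $f(V)=S\setminus S_2$ are disjoint open sets, and use openness of $f$ and irreducibility of $S$ and of the fibers at the same points.
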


\begin{proof}
For nonempty open $U,V\subset X$, openness and irreducibility give $s\in f(U)\cap f(V)$. The nonempty open subsets $U\cap f^{-1}(s)$ and $V\cap f^{-1}(s)$ of the irreducible fiber meet. Thus $U\cap V\ne\varnothing$.
\end{proof}

\begin{theorem}[Sheet loci in the Poisson moduli]\label{thm:sheetstrata}
A sheet $S\subset(\gD^*)_{(d)}$ defines the locus
\[
 \cP_S=\mu^{-1}(S)/\GD\subset\cP.
\]
The nonempty loci $\cP_S$, with $S$ ranging over the sheets in $(\gD^*)_{(d)}$, are exactly the irreducible components of
\[
 \{y\in\cP:\operatorname{rank}\Pi_y=2\dim\cB+d\},
\]
with its reduced locally closed algebraic-space structure. The leaves in $\cP_S$ are $\cL_{\mathscr O}$ for the coadjoint orbits $\mathscr O\subset S$.

If $J=J_{\GD}(\lalg,\cO)$ is the dense Jordan class of $S$, there is a canonical smooth morphism
\begin{equation}\label{eq:globalJordanparameter}
 \kappa_J:\mu^{-1}(J)/\GD
 \longrightarrow\zM^{\reg}/Q_M.
\end{equation}
For $m\in\zM^{\reg}$ and $e\in\cO$, its scheme-theoretic fiber over $[m]$ is $\cL_{\GD\cdot(m+e)}$. These fibers are the leaves whose coresidue lies in $J$.
\end{theorem}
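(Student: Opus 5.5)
The approach is to transport everything to the local cotangent product charts of Example~\ref{ex:localcotangentmodel}, $\cP|_V\cong T^*V\times\gD^*$. These are Poisson isomorphisms onto the product of the cotangent structure with the Lie--Poisson structure, and the transitions act on the residue factor by coadjoint conjugation. In such a chart the Poisson tensor is the direct sum of the nondegenerate tensor on $T^*V$ and the Lie--Poisson tensor at $\xi$. The latter has rank $\dim\GD\cdot\xi$. Hence
\[
 \operatorname{rank}\Pi_y=2\dim\cB+\dim\GD\cdot\mu(y),
\]
which agrees with Theorem~\ref{thm:allleaves}. The rank locus is therefore, in each chart, $T^*V\times(\gD^*)_{(d)}$, and globally it is $\mu^{-1}((\gD^*)_{(d)})/\GD$. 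Its reduced locally closed structure pulls back from the reduced structure on $(\gD^*)_{(d)}$, because $\mu$ is smooth by Lemma~\ref{lem:musubmersion}.

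For the component statement I would argue in three steps.

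\emph{Decomposition.} The locus $(\gD^*)_{(d)}$ is the finite union of its sheets $S$. Hence the rank locus is the finite union of the closed subsets $\cP_S$. The $\cP_S$ are closed in the rank locus because $\mu$ and $q$ are open and $\mu^{-1}(S)$ is $\GD$-stable.

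\emph{Irreducibility.} The map $(p,\mu):\mu^{-1}(S)\to\cU\times S$ is smooth and surjective with affine-space fibers. It is therefore open with irreducible fibers. The target $\cU\times S$ is irreducible, since $\cU$ is smooth and connected and $S$ is irreducible. Lemma~\ref{lem:openirreducible} then makes $\mu^{-1}(S)$ irreducible, and its image $\cP_S$ under the open quotient map is irreducible.

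\emph{No containments.} If $\cP_S\subset\cP_{S'}$, then applying $\mu$ in a chart gives $S\subset S'$, so $S=S'$.

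Distinct sheets thus give distinct maximal irreducible pieces, and these are exactly the irreducible components. The leaves inside $\cP_S$ are the $\cL_{\mathscr O}$ with $\mathscr O\subset S$. This holds because $S$ is a union of orbits and the leaves are the orbit reductions by Theorem~\ref{thm:allleaves}.

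For $\kappa_J$, I would compose $\mu$ with the geometric quotient $\pi_J$ of Proposition~\ref{prop:jordanclassquotient}. The composite $\pi_J\circ\mu:\mu^{-1}(J)\to\zM^{\reg}/Q_M$ is $\GD$-invariant, using $\mu(p\cdot g)=\Ad^*_{g^{-1}}\mu(p)$. It therefore descends to the geometric quotient $\mu^{-1}(J)/\GD$, which exists by Proposition~\ref{prop:cotangentquotientcharts}. Smoothness can be checked locally. In a chart the map becomes
\[
 T^*V\times J\to J\to\zM^{\reg}/Q_M,
\]
a projection followed by the smooth map $\pi_J$, so it is smooth. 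Canonicity follows because the transitions conjugate the residue and $\pi_J$ is $\GD$-invariant.

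For the fiber statement, Proposition~\ref{prop:jordanclassquotient} says that the scheme-theoretic fiber of $\pi_J$ over $[m]$ is the reduced orbit $\GD\cdot(m+e)$. Base change along the smooth map $\mu$ and the principal bundle $q$ commutes with taking fibers. The fiber of $\kappa_J$ is therefore $\mu^{-1}(\GD\cdot(m+e))/\GD=\cL_{\GD\cdot(m+e)}$, which is smooth and hence reduced.

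The main obstacle I expect is the scheme-theoretic bookkeeping rather than the geometry. Three points need care: showing that the reduced structure on the rank locus is compatible with the charts, checking that geometric quotients commute with the base changes used, and showing that the fiber of $\kappa_J$ is reduced. I would handle all three through smoothness of $\mu$, which pulls back reducedness and fibers. I would also use descent along the \'etale chart cover together with the $\GD$-torsor $q$, which is faithfully flat and so reflects isomorphisms.
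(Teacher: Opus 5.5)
Your proposal is correct and follows essentially the same route as the paper. The product charts $T^*V\times\gD^*$ give the rank computation and the local form $T^*V\times S$ of $\cP_S$, Lemma~\ref{lem:openirreducible} gives irreducibility, chart-wise noncontainment identifies the components, and $\kappa_J$ is the descent of $\pi_J\circ\mu$, with fibers computed by base change along the torsor. The only variation is in the irreducibility step: you apply Lemma~\ref{lem:openirreducible} upstairs to the smooth surjection $(p,\mu)\colon\mu^{-1}(S)\to\cU\times S$ with affine-space fibers and then take the image under the quotient map, whereas the paper applies it directly to the flat projection $\cP_S\to\cB$, whose fibers are an affine space times $S$; both arguments are equally valid.
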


\begin{proof}
On an \'etale chart trivializing the level bundle, Proposition~\ref{prop:cotangentquotientcharts} gives
\[
 \cP|_V\simeq T^*V\times\gD^*,\qquad
 \cP_S|_V\simeq T^*V\times S.
\]
The cotangent factor has Poisson rank $2\dim\cB$, and the residue factor has rank $\dim\GD\cdot\xi$ at $\xi$. Hence the rank locus is $\mu^{-1}((\gD^*)_{(d)})/\GD$. Coadjoint-invariance of $S$ makes the products descend.

Note that the connected smooth base $\cB$ is irreducible, since its irreducible components are disjoint open subsets. The projection $\cP_S\to\cB$ is surjective, flat and locally of finite presentation by these charts, hence open. Its geometric fibers are affine spaces times the geometrically irreducible sheet $S$, so Lemma~\ref{lem:openirreducible} proves irreducibility.

Sheets are closed irreducible components of $(\gD^*)_{(d)}$. Their descended loci therefore cover the rank locus by closed irreducible subsets. Since locally they are the distinct, mutually noncontained products $T^*V\times S$, they are precisely its irreducible components. Theorem~\ref{thm:allleaves} identifies their leaves.

The invariant composition $\pi_J\circ\mu$, with $\pi_J$ the geometric orbit quotient of Proposition~\ref{prop:jordanclassquotient}, descends to $\kappa_J$. Locally it sends $(\alpha,\xi)\in T^*V\times J$ to $\pi_J(\xi)$, hence is smooth. The quotient charts commute with base change, giving
\[
 \kappa_J^{-1}([m])
 =\mu^{-1}\!\bigl(\pi_J^{-1}([m])\bigr)/\GD
 =\mu^{-1}\!\bigl(\GD\cdot(m+e)\bigr)/\GD
 =\cL_{\GD\cdot(m+e)}
\]
scheme-theoretically.
\end{proof}

\section{The Simultaneous Parabolic Reduction}\label{sec:simultaneous}

For this section, the parabolic $P_M=MN$ and nilpotent $M$-orbit $\cO$ remain fixed as in Section~\ref{sec:leviresidues}. The residue condition is $\mu\in m+\cO+\nalg$. We impose it in two stages: reduction by $N$ at zero places the residue in $\palg$, and the subsequent $M$-reduction fixes its Levi component to $m+\cO$.

\subsection{Parabolic reductions and their cotangent bundles}

A point of $\cU/P_M$ retains a $P_M$-reduction of the residue torsor, while a point of $\cU/N$ retains a frame modulo $N$. The corresponding spaces $\cBP=\cU/P_M$ and $\cV=\cU/N$ satisfy
\[
 \cV=\cU\times^{\GD}(\GD/N),\qquad
 \cBP=\cU\times^{\GD}(\GD/P_M),\qquad
 \cV/M=\cBP,
\]
and are smooth separated algebraic spaces of finite type: over an \'etale chart trivializing $\cU$, they are $W\times\GD/N$ and $W\times\GD/P_M$. Thus $\cBP$ parametrizes $P_M$-reductions of residue torsors, and $\cV\to\cBP$ is a principal $M=P_M/N$-bundle, with
\[
 \dim\cBP=\dim\cB+\dim N
 =\dim\cB+\frac12(\dim\gD-\dim\lalg).
\]

The restriction of $\mu$ to $\nalg$ is $\mu_N:T^*\cU\to\nalg^*$.
\begin{lemma}[Cotangent reduction by the unipotent radical]\label{lem:cotangentN}
There is a canonical symplectomorphism
\[
 \mu_N^{-1}(0)/N\cong T^*\cV.
\]
The induced $M=P_M/N$-action is the cotangent lift of the action on $\cV$. Its moment map $\mu_M:T^*\cV\to\lalg^*$ is the Levi component of the coresidue $\mu$.
\end{lemma}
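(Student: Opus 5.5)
This is the standard "reduction in stages" statement for cotangent bundles: for a free Hamiltonian action of a normal subgroup $N\subset\GD$ (here $N\trianglelefteq P_M$, and we only use the $P_M$-action on $\cU$ by restriction), $\mu_N^{-1}(0)/N\cong T^*(\cU/N)$. The plan is to prove it first on the local product charts of Proposition~\ref{prop:cotangentquotientcharts} and then to observe that the identification is canonical, so it glues.

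\textbf{Construction of the map.} Since $\cU\to\cV=\cU/N$ is a principal $N$-bundle (the $N$-action is free by Proposition~\ref{prop:levelbundle}), the tangent sequence
\[
 0\to \cU\times\nalg\xrightarrow{a} T\cU\xrightarrow{d\varpi}\varpi^*T\cV\to0,
 \qquad \varpi:\cU\to\cV,
\]
dualizes to $0\to\varpi^*T^*\cV\to T^*\cU\xrightarrow{\mu_N}\cU\times\nalg^*\to0$. Here $\mu_N$ is the restriction of the moment map, by its definition $\langle\mu,a\rangle=\alpha(a^\#)$ restricted to $a\in\nalg$. Thus $\mu_N^{-1}(0)=\varpi^*T^*\cV$ is a smooth subbundle, by Lemma~\ref{lem:musubmersion} applied to $N$. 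The $N$-action on it covers the action on $\cU$ and acts on fibers via $d\varpi$-compatible identifications, so we obtain $\mu_N^{-1}(0)/N\cong T^*\cV$. Existence of the quotient follows exactly as in the proof of Theorem~\ref{thm:allleaves}: freeness plus local triviality, using the charts $W\times\GD/N$.

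\textbf{Symplectic structure and residual action.} The pullback of the tautological form $\lambda_{\cU}$ to $\varpi^*T^*\cV$ equals the pullback of $\lambda_{\cV}$, since $\alpha=\varpi^*\beta$ gives $\alpha(dp(v))=\beta(d\varpi\, dp(v))$. Applying $-d$ identifies $i^*\omega_{LH}$ with the pullback of the canonical form on $T^*\cV$; hence the reduced form is canonical, which is the standard Marsden--Weinstein computation at the value $0$ (the $\mu^*\omega_{\mathscr O}$ term vanishes). Normality of $N$ in $P_M$ makes the $P_M$-action preserve $\mu_N^{-1}(0)$ and descend to an $M=P_M/N$-action. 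This action is the cotangent lift of the $M$-action on $\cV$, because the identification above is natural in $\varpi$ and $P_M$-equivariant. For $A\in\lalg$, choose the lift $A\in\palg$; the descended fundamental vector field on $\cV$ is $d\varpi(A^\#)$. Then $\langle\mu_M,A\rangle(\beta)=\beta(d\varpi A^\#)=\alpha(A^\#)=\langle\mu,A\rangle$ restricted to $\mu_N^{-1}(0)$.

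\textbf{Interpretation via $B_{\gD}$ and the main obstacle.} Finally, on $\mu_N^{-1}(0)$ the coresidue annihilates $\nalg$. Since $\nalg^\perp=\palg$ under $B_{\gD}$, the residue lies in $\palg$. Its pairing with $\lalg$ is then $B_{\gD}(\operatorname{pr}_{\lalg}\Res,-)|_{\lalg}$, because $B_{\gD}(\nalg,\lalg)=0$. So $\mu_M$ is the Levi component of the coresidue, and it is well defined modulo $N$ because $\operatorname{pr}_{\lalg}$ is $P_M$-equivariant with $N$ acting trivially on $\lalg=\palg/\nalg$. The main obstacle is making the quotient $\mu_N^{-1}(0)/N$ and its canonical identification rigorous in the category of algebraic spaces, rather than only pointwise. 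I would handle this by working on the \'etale charts $T^*W\times T^*(\GD/N)$, and, for the factor $T^*\GD$ reduced by $N$, by checking $T^*\GD\sslash_0 N\cong T^*(\GD/N)$ by left trivialization. The transition maps are $\GD$-translations, which commute with everything, so descent glues the local isomorphisms.
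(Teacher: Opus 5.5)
Your proposal is correct and follows the paper's proof essentially step for step. Like the paper, you dualize the tangent sequence of the $N$-torsor $\cU\to\cV$ to get $\mu_N^{-1}(0)\cong\cU\times_{\cV}T^*\cV$, compare tautological one-forms, compute $\mu_M$ on fundamental vector fields, and use $\nalg^\perp=\palg$ to identify it with the Levi residue. The chart-gluing step you flag as the main obstacle is unnecessary, because the tangent-sequence construction is already global and canonical: $\mu_N^{-1}(0)\to T^*\cV$ is just the base change of the $N$-torsor $\cU\to\cV$. (Your description of the cotangent transitions as plain $\GD$-translations also omits the shift $\alpha'=\alpha+\langle\zeta,(da)a^{-1}\rangle$.)
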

\begin{proof}
The $N$-torsor $r_N:\cU\to\cV$ has tangent sequence
\[
 0\longrightarrow\cU\times\nalg
 \longrightarrow T\cU
 \xrightarrow{\,dr_N\,}r_N^*T\cV
 \longrightarrow0.
\]
The moment map evaluates covectors on the tangent directions of the $N$-action. Thus $\mu_N=0$ precisely when, for $v=r_N(u)$,
\[
 \alpha_u=(dr_N)_u^*\beta_v,
 \qquad
 \mu_N^{-1}(0)\cong\cU\times_{\cV}T^*\cV.
\]
Quotienting this $N$-torsor therefore gives $T^*\cV$. For the inclusion $i$ of the zero level and its projection $r$ to $T^*\cV$, covector evaluation gives $i^*\lambda_{\cU}=r^*\lambda_{\cV}$; applying $-d$ then proves symplecticity.

The $P_M$-equivariant identification induces the cotangent lift of $(uN)\cdot l=(ul)N$. For $A\in\lalg$,
\[
 \langle\mu_M(v,\beta_v),A\rangle
 =\beta_v(A^\#(v))
 =\alpha_u(A^\#(u))
 =\langle\mu(u,\alpha_u),A\rangle.
\]
Since $\mu_N=0$ and $N$ acts trivially on $\palg/\nalg$, restriction of $\mu$ to $\palg$ descends to $(\palg/\nalg)^*=\lalg^*$.

The residue corresponding to the moment-map value is $y=B_{\gD}^{-1}\mu(u,\alpha_u)$. Since $\nalg^\perp=\palg$ and $B_{\gD}(\nalg,\palg)=0$, the zero condition gives $y=y_{\lalg}+y_{\nalg}$ and
\[
 \langle\mu_M(v,\beta_v),A\rangle
 =B_{\gD}(y_{\lalg},A).
\]
Thus nondegeneracy of $B_{\gD}|_{\lalg}$ identifies $\mu_M$ with the Levi residue $y_{\lalg}$.
\end{proof}

\subsection{The family over the center}

Note that the decomposition $\zM+\cO\cong\zM\times\cO$ has an $M$-invariant projection.
\begin{definition}[Simultaneous parabolic reduction]\label{def:simultaneous}
The \emph{simultaneous parabolic reduction} is defined by
\[
 \widetilde\cL=\mu_M^{-1}(\zM+\cO)/M.
\]
The projection $\operatorname{pr}_{\zM}\circ\mu_M$ then descends to $q:\widetilde\cL\to\zM$, whose fiber at $m$ is
\[
 \widetilde\cL_m=\mu_M^{-1}(m+\cO)/M.
\]
\end{definition}

\begin{proposition}[Existence of the smooth family]\label{prop:smoothfamily}
The space $\widetilde\cL$ is smooth, and $q:\widetilde\cL\to\zM$ is smooth and surjective. Each fiber is nonempty and holomorphic symplectic, with reduced form $\Omega_m$ and dimension
\begin{equation}\label{eq:simultaneousdimension}
 \dim\widetilde\cL_m
 =2\dim\cB+\dim\gD-\dim\lalg+\dim\cO.
\end{equation}
\end{proposition}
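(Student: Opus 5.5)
We prove the statement by copying the argument of Theorem~\ref{thm:allleaves} for the $M$-action on $T^*\cV$. In place of a single coadjoint orbit we use the $M$-stable smooth subvariety $\zM+\cO\cong\zM\times\cO$ of $\lalg\cong\lalg^*$.

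\emph{Step 1: local structure.} The $M$-action on $\cV$ is free, since $\cV\to\cBP$ is a principal $M$-bundle. Hence the argument of Lemma~\ref{lem:musubmersion}, applied to $\cV$ in place of $\cU$, shows that $(p,\mu_M):T^*\cV\to\cV\times\lalg^*$ is smooth and surjective, with affine fibers of dimension $\dim\cV-\dim M$. It follows that $\mu_M^{-1}(\zM+\cO)$ is an affine bundle over $\cV\times(\zM+\cO)$. It is therefore smooth and nonempty, and it is connected because $\cV$ is connected: $\cV$ is a $\GD/N$-bundle over the connected base $\cB$. Over an étale chart $W\to\cBP$ trivializing $\cV$, the quotient by the free $M$-action is $T^*W\times(\zM+\cO)$, exactly as in Example~\ref{ex:localcotangentmodel}. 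In these charts $q$ is the projection to $\zM$ through $\zM\times\cO$, so $q$ is smooth and surjective, and $\widetilde\cL$ is smooth. The transitions act on the residue by $M$, which fixes $\zM$ pointwise and preserves $\cO$, so $q$ is well defined. I would construct the quotient globally by the same descent as in Proposition~\ref{prop:cotangentquotientcharts}.

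\emph{Step 2: fibers as orbit reductions.} For each $m$, the set $m+\cO$ is a single $M$-orbit. Indeed, $M$ acts on $m+e$ by $\Ad(g)(m+e)=m+\Ad(g)e$, since $m$ is central in $\lalg$. Therefore $\widetilde\cL_m=\mu_M^{-1}(m+\cO)/M$ is precisely the Marsden--Weinstein orbit reduction of $T^*\cV$ at the coadjoint $M$-orbit corresponding to $m+\cO$ under $B_{\gD}|_{\lalg}$. The kernel computation in the proof of Theorem~\ref{thm:allleaves} uses only the moment identity, equivariance and freeness, so it transfers verbatim. It shows that $i^*\omega_{\cV}+\mu_M^*\omega_{m+\cO}$ descends to a nondegenerate closed form $\Omega_m$. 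Since $q$ is smooth, the fiber is a smooth closed subspace, and the scheme-theoretic fiber agrees with this reduction by the chart description $T^*W\times(m+\cO)$.

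\emph{Step 3: dimension.} We have $\dim\widetilde\cL_m=2\dim\cBP+\dim(m+\cO)=2\dim\cBP+\dim\cO$. Substituting $\dim\cBP=\dim\cB+\tfrac12(\dim\gD-\dim\lalg)$ gives \eqref{eq:simultaneousdimension}.

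The only delicate point is Step 1: the global existence of the quotient as a smooth algebraic space, together with compatibility of the chart description with base change to the fibers of $q$. I would handle this by invoking the appendix construction of cotangent quotients for a free action with an étale-locally trivial torsor ($\cV\to\cBP$). With that in hand, flatness and smoothness are local statements and are read off from the product charts.
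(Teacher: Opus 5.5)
Your proposal is correct and follows essentially the paper's own proof. The paper uses the same three ingredients: $(p,\mu_M)$ is a vector-bundle epimorphism; the cotangent quotient charts $T^*W\times\zM\times\cO$ of Proposition~\ref{prop:cotangentquotientcharts} give smoothness of $\widetilde\cL$ and smooth surjectivity of $q$; and the orbit-reduction argument of Theorem~\ref{thm:allleaves}, applied to the free $M$-action, gives the fiber forms $\Omega_m$ and the dimension $2\dim\cBP+\dim\cO$.
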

\begin{proof}
For the cotangent projection $p:T^*\cV\to\cV$, the map $(p,\mu_M):T^*\cV\to\cV\times\lalg^*$ is a vector-bundle epimorphism, so $\mu_M$ is smooth and surjective. Proposition~\ref{prop:cotangentquotientcharts} gives, on an \'etale chart trivializing $\cV\to\cBP$,
\[
 \widetilde\cL|_W\cong T^*W\times\zM\times\cO,
 \qquad q(\beta,m,e)=m.
\]
The product charts therefore prove smoothness and surjectivity. The orbit-reduction argument of Theorem~\ref{thm:allleaves}, applied to the free $M$-action, gives the fiber symplectic forms and
\[
 \dim\widetilde\cL_m=2\dim\cBP+\dim\cO
 =2\dim\cB+\dim\gD-\dim\lalg+\dim\cO.
\]
\end{proof}

\begin{lemma}[The space with a parabolic reduction]\label{lem:comparisonspaces}
For every $m\in\zM$,
\[
 \widetilde\cL_m
 \cong\mu^{-1}(m+\cO+\nalg)/P_M
 \cong\bigl(\cX\times_{\gD^*}
           \widetilde\cJ_m\bigr)/\GD.
\]
In the fiber product, a point is a leveled Higgs field together with a compatible parabolic reduction of its residue.
\end{lemma}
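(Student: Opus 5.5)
The plan is to prove the two isomorphisms in turn: first the reduction in stages identifying $\widetilde\cL_m$ with $\mu^{-1}(m+\cO+\nalg)/P_M$, then the associated-bundle identification with the fiber product. Throughout, $B_{\gD}$ identifies residues and coresidues, and every quotient is computed on the étale charts of Proposition~\ref{prop:cotangentquotientcharts}. This lets each isomorphism be checked chartwise and then glued by canonicity.

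\emph{First isomorphism.} By Lemma~\ref{lem:cotangentN}, $\mu_N^{-1}(0)/N\cong T^*\cV$, and $\mu_M$ is the Levi component of $\mu$ on this locus. Since $\nalg^\perp=\palg$, the condition $\mu_N=0$ says exactly that the residue $y=B_{\gD}^{-1}\mu$ lies in $\palg$. Adding the condition $\mu_M\in m+\cO$ then says $\operatorname{pr}_{\lalg}(y)\in m+\cO$, so
\[
 \mu^{-1}(m+\cO+\nalg)/N=\mu_M^{-1}(m+\cO)\subset T^*\cV.
\]
Here I regard $\mu^{-1}(m+\cO+\nalg)$ as a closed $P_M$-stable subspace of $\mu_N^{-1}(0)$. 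It is $P_M$-stable because $P_M$ fixes $m$, preserves $\cO+\nalg$, and $\nalg$ is an ideal of $\palg$. Since $N$ is normal in $P_M$, quotienting first by $N$ and then by $M=P_M/N$ gives
\[
 \mu^{-1}(m+\cO+\nalg)/P_M\cong\mu_M^{-1}(m+\cO)/M=\widetilde\cL_m.
\]
Freeness of the $P_M$-action, inherited from freeness of $\GD$ on $\cU$, ensures these quotients are principal bundles. They can therefore be formed in stages.

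\emph{Second isomorphism.} The morphism $\nu_m$ embeds $\widetilde\cJ_m=\GD\times^{P_M}(m+\cO+\nalg)$ into $\GD/P_M\times\gD$. Hence a point of the fiber product $\cX\times_{\gD^*}\widetilde\cJ_m$ is a leveled Higgs field $x$ together with a coset $hP_M$ such that $\Ad(h^{-1})\mu(x)\in m+\cO+\nalg$; this is the "compatible parabolic reduction" of the statement. I would define
\[
 \mu^{-1}(m+\cO+\nalg)\times^{P_M}\GD\longrightarrow\cX\times_{\gD^*}\widetilde\cJ_m,\qquad [u,g]\longmapsto\bigl(u\cdot g,\,[g^{-1};\mu(u)]\bigr).
\]
Equivariance, $\mu(u\cdot g)=\Ad_{g^{-1}}\mu(u)$, shows that this map lands in the fiber product. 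Its inverse sends $(x,[h;y])$ to $[x\cdot h^{-1},h]$. This is well defined modulo $P_M$ and independent of the representative $h$, by the same descent argument after the base change $\GD\to\GD/P_M$ used in Theorem~\ref{thm:allleaves}.

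The map is $\GD$-equivariant for right multiplication on the second factor, so taking $\GD$-quotients gives
\[
 \bigl(\cX\times_{\gD^*}\widetilde\cJ_m\bigr)/\GD\cong\mu^{-1}(m+\cO+\nalg)/P_M.
\]

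\emph{Main obstacle.} I expect the delicate step to be scheme-theoretic rather than set-theoretic. One must check that the fiber product is taken with the reduced structure on $\widetilde\cJ_m$, and that the associated-bundle map is an isomorphism of algebraic spaces, not merely a bijection on points. I would settle this in the local charts $T^*V\times\gD^*$. There both sides become $T^*V\times\widetilde\cJ_m$, and the map is the identity on the $\widetilde\cJ_m$ factor. That chartwise identity gives the isomorphism, and canonicity of the construction lets the charts glue.
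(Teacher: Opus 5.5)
Your proposal is correct and follows the paper's route: the first isomorphism is reduction in stages via Lemma~\ref{lem:cotangentN}, and for the second, your equivariant map from $\mu^{-1}(m+\cO+\nalg)\times^{P_M}\GD$ is the inverse of the paper's normalization $(x,[h;y])\mapsto(x\cdot h,[1;y])$, followed by the $\GD$-quotient. One slip to fix: with your conventions the inverse must be $(x,[h;y])\mapsto[x\cdot h,\,h^{-1}]$, not $[x\cdot h^{-1},\,h]$, because it is $\mu(x\cdot h)=\Ad(h^{-1})\mu(x)=y$ that puts the first entry in $\mu^{-1}(m+\cO+\nalg)$.
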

\begin{proof}
Lemma~\ref{lem:cotangentN} gives the first quotient: the zero $N$-moment condition says that the residue lies in $\palg$, and the $M$-moment condition fixes its Levi component. For the second identification, the action is $(x,j)\cdot h=(x\cdot h,h^{-1}j)$. A representative $(x,[h;y])$ is sent to $(x\cdot h,[1;y])$, with $\mu(x\cdot h)=y$. Two representatives with second coordinate $[1;y]$ differ precisely by $P_M$, giving the quotient $\mu^{-1}(m+\cO+\nalg)/P_M$.
\end{proof}

\begin{lemma}[Restriction of the orbit form]\label{lem:parabolicforms}
The residue orbits are $\mathscr O_m=\GD\cdot(m+e)$ for regular $m$, and $\mathscr O_0=\Ind_{\lalg}^{\gD}(\cO)$. The parabolic residue conditions define
\[
 S_m=m+\cO+\nalg\quad(m\in\zM^{\reg}),\qquad
 S_0=(\cO+\nalg)\cap\mathscr O_0.
\]
The map $\operatorname{pr}_{\cO}:S_m\to\cO$, $m+e'+n_y\mapsto e'$, satisfies
\[
 \omega_{\mathscr O_m}|_{S_m}=\operatorname{pr}_{\cO}^*\omega_{\cO}.
\]
\end{lemma}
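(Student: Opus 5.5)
The plan is to compare the two forms pointwise on tangent vectors coming from the infinitesimal $P_M$-action. This works because in both cases $S_m$ is a single $P_M$-orbit. For $m\in\zM^{\reg}$ this is Lemma~\ref{lem:unipotentorbit}: $S_m=P_M\cdot(m+e)$. For $m=0$ it is the observation after Lemma~\ref{lem:inducedorbitcover}, that $S_0$ is one $P_M$-orbit. Hence at $y=m+e'+n_y\in S_m$ every tangent vector has the form $[A,y]$ with $A\in\palg$. Moreover $S_m\subset\mathscr O_m$, so these are also fundamental vectors of the $\GD$-orbit. Throughout I use $B_{\gD}$ to transport the KKS form to adjoint orbits, so that
\[
 (\omega_{\mathscr O_m})_y([A,y],[B,y])=B_{\gD}(y,[A,B]),
\]
with the same sign convention as in the definition before Theorem~\ref{thm:allleaves}. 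The analogous formula holds for $\omega_{\cO}$ on the $M$-orbit $\cO\subset\lalg$, using $B_{\gD}|_{\lalg}$. Since the KKS form is well defined on tangent vectors, it does not matter which $A\in\palg$ represents a given vector.

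First, decompose $A=A_{\lalg}+A_{\nalg}$ and $B=B_{\lalg}+B_{\nalg}$ using $\palg=\lalg\oplus\nalg$. Because $\nalg$ is an ideal of $\palg$, we get $[A,B]\in\palg$ with Levi component $[A_{\lalg},B_{\lalg}]$. Now evaluate $B_{\gD}(y,[A,B])$ term by term, using $\nalg^\perp=\palg$ (so $B_{\gD}(\nalg,\palg)=0$) and $B_{\gD}(\lalg,\nalg)=0$:
\begin{itemize}
\item The $n_y$-term vanishes.
\item The $m$-term reduces to $B_{\gD}(m,[A_{\lalg},B_{\lalg}])=B_{\gD}([m,A_{\lalg}],B_{\lalg})=0$, since $m$ is central in $\lalg$.
\item The $e'$-term becomes $B_{\gD}(e',[A_{\lalg},B_{\lalg}])$.
\end{itemize}
This last expression is exactly $(\omega_{\cO})_{e'}([A_{\lalg},e'],[B_{\lalg},e'])$.

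Second, compute the differential of $\operatorname{pr}_{\cO}$. It is the restriction of the linear map $\palg\to[\lalg,\lalg]$ that takes the Levi component and drops the central part, so it sends $[A,y]$ to the $[\lalg,\lalg]$-part of $\operatorname{pr}_{\lalg}[A,y]$. The terms $[A_{\nalg},y]$ and $[A_{\lalg},n_y]$ lie in $\nalg$. The term $[A_{\lalg},m]$ vanishes. So the image is $[A_{\lalg},e']$, which lies in $T_{e'}\cO$. Comparing with the first step gives $\omega_{\mathscr O_m}|_{S_m}=\operatorname{pr}_{\cO}^*\omega_{\cO}$ pointwise.

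The main point needing care is the case $m=0$. There, $\operatorname{pr}_{\cO}$ is defined on $S_0$ only because $S_0\subset\cO+\nalg$. Also, the tangent vectors must be realized by $\palg$ rather than all of $\gD$; this is exactly why the single-$P_M$-orbit statement is needed, and it is already established. Beyond that, the only risk is an overall sign or transposition mismatch between the coadjoint convention $\langle\Ad_g^*\xi,a\rangle=\langle\xi,\Ad_{g^{-1}}a\rangle$ and the adjoint picture. Since the same convention is applied to both $\mathscr O_m$ and $\cO$, any such sign appears identically on both sides and cancels.
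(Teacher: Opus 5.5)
Your proposal is correct and follows essentially the same route as the paper. Both arguments use that $S_m$ is a single $P_M$-orbit, so that tangent vectors are $[A,y]$ with $A\in\palg$, and both reduce $B_{\gD}(y,[A,B])$ to $B_{\gD}(e',[A_{\lalg},B_{\lalg}])$ via $B_{\gD}(\palg,\nalg)=0$ and the centrality of $m$; your explicit computation $d\operatorname{pr}_{\cO}([A,y])=[A_{\lalg},e']$ simply spells out the paper's final equality.
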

\begin{proof}
Lemmas~\ref{lem:unipotentorbit} and~\ref{lem:inducedorbitcover} make $S_m$ a single $P_M$-orbit, so its tangent vectors are $[A,y]$, $A\in\palg$. The residue decomposes as $y=m+e'+n_y$, and $A_{\lalg}=\operatorname{pr}_{\lalg}(A)$ is the Levi component. For $A,B\in\palg$,
\[
 \begin{aligned}
 (\omega_{\mathscr O_m})_y([A,y],[B,y])
 &=B_{\gD}(y,[A,B])\\
 &=B_{\gD}(e',[A_{\lalg},B_{\lalg}])\\
 &=(\operatorname{pr}_{\cO}^*\omega_{\cO})_y([A,y],[B,y]).
 \end{aligned}
\]
The middle equality uses $B_{\gD}(\palg,\nalg)=0$ and the fact that $m$ annihilates $[\lalg,\lalg]$.
\end{proof}

\begin{theorem}[Comparison with the residue-orbit reductions]\label{thm:simultaneous}
For $m\in\zM^{\reg}$ and $e\in\cO$, forgetting the parabolic reduction gives a symplectomorphism
\[
 \widetilde\cL_m\xrightarrow{\sim}\cL_{\GD\cdot(m+e)}.
\]
This fiber is a symplectic leaf of $\cP$. At $m=0$, the induced orbit $\mathscr O_0=\Ind_{\lalg}^{\gD}(\cO)$ determines the open subset
\[
 \widetilde\cL_0^\circ
 =\mu^{-1}\bigl((\cO+\nalg)\cap\mathscr O_0\bigr)/P_M
 \subset\widetilde\cL_0.
\]
Forgetting the parabolic reduction gives a finite \'etale symplectic morphism
\[
 f_0:\widetilde\cL_0^\circ\longrightarrow\cL_{\mathscr O_0}
\]
of generalized Springer degree. It is an isomorphism when the degree is one.
\end{theorem}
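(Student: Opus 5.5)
The plan is to work through Lemma~\ref{lem:comparisonspaces}, which writes $\widetilde\cL_m\cong(\cX\times_{\gD^*}\widetilde\cJ_m)/\GD$. In this description, forgetting the parabolic reduction is the map induced by $\mathrm{id}\times\nu_m$ to $(\cX\times_{\gD^*}\nu_m(\widetilde\cJ_m))/\GD=\mu^{-1}(\nu_m(\widetilde\cJ_m))/\GD$.

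\textbf{Regular parameters.} For $m\in\zM^{\reg}$, Theorem~\ref{thm:parabolicfibers} says that $\nu_m:\widetilde\cJ_m\to\GD\cdot(m+e)$ is a $\GD$-equivariant isomorphism. Base change along $\mu$ therefore gives a $\GD$-equivariant isomorphism
\[
\cX\times_{\gD^*}\widetilde\cJ_m\cong\mu^{-1}(\GD\cdot(m+e)).
\]
Passing to geometric quotients, which exist by Proposition~\ref{prop:cotangentquotientcharts} and commute with this base change, gives an isomorphism $\widetilde\cL_m\cong\cL_{\GD\cdot(m+e)}$. Theorem~\ref{thm:allleaves} then shows that the target is a symplectic leaf.

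\textbf{Matching the forms.} Present $\widetilde\cL_m=\mu^{-1}(S_m)/P_M$ with $S_m=m+\cO+\nalg$. Combine Lemma~\ref{lem:cotangentN}, which gives $i^*\lambda_\cU=r^*\lambda_\cV$ on $\mu_N^{-1}(0)$, with the reduced-form formula \eqref{eq:reducedform} applied to the $M$-reduction at $m+\cO$. On $\mu^{-1}(S_m)$ the pullback of $\Omega_m$ is then
\[
i^*\omega_{LH}+\mu^*\operatorname{pr}_\cO^*\omega_\cO .
\]
On the other side, pulling back $\Omega_{\mathscr O_m}$ along $\mu^{-1}(S_m)\subset\mu^{-1}(\mathscr O_m)$ gives $i^*\omega_{LH}+\mu^*(\omega_{\mathscr O_m}|_{S_m})$. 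Lemma~\ref{lem:parabolicforms} identifies $\omega_{\mathscr O_m}|_{S_m}=\operatorname{pr}_\cO^*\omega_\cO$, so the two pulled-back forms agree. Because the quotient maps are submersions, the descended forms agree as well. The identical computation works at $m=0$ on $S_0$, since Lemma~\ref{lem:parabolicforms} covers that case too.

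\textbf{Zero parameter.} The subset $S_0$ is open in $\cO+\nalg$, so $\widetilde\cL_0^\circ$ is open in $\widetilde\cL_0$. Its description is
\[
\widetilde\cL_0^\circ\cong(\cX\times_{\gD^*}\widetilde{\mathscr O}_0)/\GD,
\]
and $f_0$ is induced by $\nu_0:\widetilde{\mathscr O}_0\to\mathscr O_0$. By Lemma~\ref{lem:inducedorbitcover}, $\nu_0$ is finite étale of generalized Springer degree. Finite étaleness and degree are preserved under base change along $\mu$, giving a finite étale map
\[
\cX\times_{\gD^*}\widetilde{\mathscr O}_0\to\mu^{-1}(\mathscr O_0)
\]
of the same degree.

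The main obstacle is descending this to the $\GD$-quotients. I would work on the étale charts $T^*V\times\gD^*$ of Example~\ref{ex:localcotangentmodel}. There the source quotient becomes $T^*V\times\widetilde{\mathscr O}_0$, the target becomes $T^*V\times\mathscr O_0$, and $f_0$ becomes $\mathrm{id}\times\nu_0$. This chart form makes finiteness, étaleness and the degree count evident locally.

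Compatibility of the charts is the point to check. The transitions act by $\GD$ on the residue, and $\nu_0$ is $\GD$-equivariant, so the local statements glue. Finally, when the degree is one, a finite étale map of degree one is an isomorphism, so $f_0$ is an isomorphism in that case.
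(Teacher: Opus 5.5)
Your proposal is correct and follows the paper's proof step for step. For regular $m$ you use Lemma~\ref{lem:comparisonspaces} with Theorem~\ref{thm:parabolicfibers} and base change along $\mu$, at zero you use Lemma~\ref{lem:inducedorbitcover} with base change, and you match the reduction forms before quotienting via Lemmas~\ref{lem:cotangentN} and~\ref{lem:parabolicforms}, exactly as the paper does.

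The only difference is in the descent step at zero. The paper pulls $f_0$ back along the $\GD$-torsor $\mu^{-1}(\mathscr O_0)\to\cL_{\mathscr O_0}$, where it becomes literally the base change of $\nu_0$. So the chart compatibility you single out as the main obstacle is unnecessary: $f_0$ is already globally defined, and finite \'etaleness is local on the target.
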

\begin{proof}
Lemma~\ref{lem:comparisonspaces} expresses the fibers as quotients of the local residue spaces. For regular $m$, Theorem~\ref{thm:parabolicfibers} identifies $\widetilde\cJ_m$ with $\GD\cdot(m+e)$; base change by $\mu$ and the free quotient therefore give the asserted isomorphism. At zero, the finite \'etale map $\nu_0:\widetilde{\mathscr O}_0\to\mathscr O_0$ of Lemma~\ref{lem:inducedorbitcover} gives
\[
 \widetilde\cL_0^\circ
 \cong\bigl(\cX\times_{\gD^*}\widetilde{\mathscr O}_0\bigr)/\GD,
 \qquad f_0:\widetilde\cL_0^\circ\to\cL_{\mathscr O_0}.
\]
Pulling $f_0$ back along $\mu^{-1}(\mathscr O_0)\to\cL_{\mathscr O_0}$ gives the projection $\mu^{-1}(\mathscr O_0)\times_{\mathscr O_0}\widetilde{\mathscr O}_0\to\mu^{-1}(\mathscr O_0)$. This is the base change of $\nu_0$, so it is finite \'etale of the same degree. These properties descend along the torsor.

Lemma~\ref{lem:parabolicforms} identifies the orbit term in the reduction form \eqref{eq:reducedform} with the Levi orbit term. The cotangent terms agree by Lemma~\ref{lem:cotangentN}. Both terms therefore agree before quotienting, giving the asserted symplectomorphism and $f_0^*\Omega_{\mathscr O_0}=\Omega_0|_{\widetilde\cL_0^\circ}$. Theorem~\ref{thm:allleaves} identifies the leaves.
\end{proof}

In particular, the orbit dimension formula of Proposition~\ref{prop:induceddimension} and Theorem~\ref{thm:allleaves} give, for $m\in\zM^{\reg}$,
\begin{equation}\label{eq:globaldimensioncondition}
 \dim\cL_{\GD\cdot(m+e)}
 =2\dim\cB+\dim\gD-\dim\lalg+\dim\cO
 =\dim\cL_{\Ind_{\lalg}^{\gD}(\cO)}.
\end{equation}
\begin{example}[The zero fiber for a torus Levi]\label{ex:SL2globalfamily}
In this example, $\gD=\mathfrak{sl}_2$, $M$ is the diagonal torus, and $\cO=0$. Then $\cBP$ parametrizes residue flags and
\[
 \widetilde\cL_0=T^*\cBP.
\]
On a level chart, $\cBP|_W=W\times\mathbb P^1$, so
\[
 \widetilde\cL_0|_W=T^*W\times T^*\mathbb P^1.
\]
The Springer map sends $(\ell,x)\in T^*\mathbb P^1$, with $x(\C^2)\subset\ell\subset\ker x$, to $x$. For $x\ne0$, $\ell=\operatorname{im}x=\ker x$ is unique; over $x=0$ its fiber is the zero section $\mathbb P^1$. Thus $T^*W\times(T^*\mathbb P^1\setminus\mathbb P^1)$ maps isomorphically to the nonzero nilpotent orbit reduction.
\end{example}

\subsection{Orbit closures and the forgetful morphism}\label{subsec:orbitclosures}

The reduced orbit closure $\overline\cO$ defines the family
\[
 \overline{\widetilde\cL}
 :=\mu_M^{-1}(\zM+\overline\cO)/M,
 \qquad
 \overline q:\overline{\widetilde\cL}\longrightarrow\zM.
\]
Here $\overline q$ is the projection, and $\widetilde\cL$ is the open subset with Levi nilpotent residue in $\cO$.

\begin{proposition}[The flat family with orbit-closure fibers]\label{prop:orbitclosurefamily}
The morphism $\overline q$ is flat, surjective, and of finite presentation. Its scheme-theoretic fiber at $m\in\zM$ is
\[
 \overline{\widetilde\cL}_m
 =\mu_M^{-1}(m+\overline\cO)/M.
\]
Every geometric fiber is reduced and has dimension
\[
 2\dim\cB+\dim\gD-\dim\lalg+\dim\cO.
\]
The total space has a Poisson bracket for which the central coordinates are \emph{Casimirs}, meaning that their brackets with all functions vanish. This bracket restricts to every fiber.
\end{proposition}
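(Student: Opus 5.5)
The argument is local over $\cBP$ and reduces everything to the product $\zM\times\overline\cO$. Following the proof of Proposition~\ref{prop:smoothfamily}, we take an \'etale chart $W\to\cBP$ trivializing the $M$-bundle $\cV\to\cBP$, so that $\cV|_W\cong W\times M$. Proposition~\ref{prop:cotangentquotientcharts}, applied to the free $M$-action, then gives $T^*\cV|_W/M\cong T^*W\times\lalg^*$, with $\mu_M$ becoming the second projection. Restricting to the $M$-stable closed subset $\zM+\overline\cO$ (identified with a subset of $\lalg^*$ by $B_{\gD}$) yields
\[
 \overline{\widetilde\cL}|_W\cong T^*W\times\zM\times\overline\cO,
 \qquad \overline q(\beta,m,e)=m .
\]
This uses the $M$-invariant decomposition $\zM+\overline\cO\cong\zM\times\overline\cO$ with its reduced structure, which holds because $\zM$ and $[\lalg,\lalg]$ are complementary and $M$ acts trivially on $\zM$. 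Since the quotient charts commute with base change, the scheme-theoretic fiber over $m$ is $T^*W\times\{m\}\times\overline\cO$, which is the local form of $\mu_M^{-1}(m+\overline\cO)/M$.

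From these charts, flatness, finite presentation and surjectivity of $\overline q$ follow by standard base change. The map is locally the projection of $T^*W\times\overline\cO$ times $\zM$ to $\zM$, and a projection $\zM\times Z\to\zM$ with $Z$ of finite type over $\C$ is flat and of finite presentation. Both properties are \'etale local on the source, so they descend. Every geometric fiber is locally $T^*W\times\overline\cO$, which is reduced because $\overline\cO$ is reduced and $T^*W$ is smooth. Its dimension is
\[
 2\dim\cBP+\dim\cO=2\dim\cB+\dim\gD-\dim\lalg+\dim\cO,
\]
using $\dim\cBP=\dim\cB+\dim N$ and $2\dim N=\dim\gD-\dim\lalg$.

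For the Poisson structure, I would view $\overline{\widetilde\cL}$ as a closed subspace of the Poisson quotient $T^*\cV/M$. In the charts this quotient is the product of the cotangent structure on $T^*W$ and the Lie--Poisson structure on $\lalg^*$; this is the $M$-analogue of Example~\ref{ex:localcotangentmodel} and the remark following it. The subset $\zM\times\overline\cO\subset\lalg^*$ is a union of coadjoint orbits, and its ideal is Poisson, since a Poisson ideal of the Lie--Poisson structure is the same as an $M$-stable (i.e.\ $\ad$-stable) ideal. Hence the bracket restricts to $\overline{\widetilde\cL}$, and the chart transitions preserve it because they preserve the global quotient bracket.

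The central coordinates $\langle\,\cdot\,,z\rangle$ for $z\in\zM$ are Casimirs for the Lie--Poisson bracket, since $\{\langle\zeta,z\rangle,\langle\zeta,B\rangle\}=\langle\zeta,[z,B]\rangle=0$. They also Poisson-commute with the cotangent factor because mixed brackets vanish. Consequently the ideal generated by $m_i-c_i$ is Poisson, and the bracket restricts to every fiber.

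I expect the main obstacle to be justifying the charts scheme-theoretically. This means checking that Proposition~\ref{prop:cotangentquotientcharts} applies to the closed, generally singular, $M$-stable subset $\zM+\overline\cO$ and not only to smooth orbit loci. I would handle this by performing the quotient of $\mu_M^{-1}(\zM+\overline\cO)$ as a base change of the free quotient $T^*\cV\to T^*\cV/M$. Since that quotient is a principal $M$-bundle, flat descent guarantees that closed $M$-stable subschemes with their reduced structures correspond exactly.
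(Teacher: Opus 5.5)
Your proposal is correct and follows essentially the same route as the paper. The chart $\overline{\widetilde\cL}|_W\cong T^*W\times\zM\times\overline\cO$ from Proposition~\ref{prop:cotangentquotientcharts} gives flatness, finite presentation, surjectivity, the fiber formula, reducedness and the dimension, and $M$-stability of the ideal of $\zM+\overline\cO\subset\lalg^*$ makes it Poisson with Casimir central coordinates. The obstacle you flag is already handled by that proposition, which is stated for arbitrary $H$-stable locally closed $Y\subset\mathfrak h^*$ and not only smooth ones, so your descent detour is not needed.
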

\begin{proof}
Since $\zM+\overline\cO=\zM\times\overline\cO$ is closed, Proposition~\ref{prop:cotangentquotientcharts} gives, on an \'etale chart trivializing $\cV\to\cBP$,
\[
 \overline{\widetilde\cL}|_W
 \cong T^*W\times\zM\times\overline\cO,
 \qquad \overline q(\beta,m,e)=m.
\]
These charts commute with base change and therefore prove flatness, surjectivity, finite presentation and the fiber formula. The geometric fibers are reduced because $\overline\cO$ is geometrically reduced over $\C$ and $T^*W$ is smooth; their dimension is $2\dim\cBP+\dim\cO$.

The $M$-invariant ideal of $\overline\cO$ is preserved by brackets with linear functions, which differentiate the adjoint action, and hence with all polynomials. Thus $\zM+\overline\cO\subset\lalg^*$ is Poisson, with Casimir coordinates. The chart bracket is the product of the cotangent, zero, and restricted Lie--Poisson brackets. These restrictions of the bracket on $T^*\cV/M$ agree on overlaps, and every fiber ideal is Poisson.
\end{proof}

\begin{proposition}[The proper forgetful morphism]\label{prop:propercomparison}
Forgetting the parabolic reduction defines a proper Poisson morphism
\begin{equation}\label{eq:properclosurecomparison}
 \overline f:\overline{\widetilde\cL}
 \longrightarrow\cP\times\zM.
\end{equation}
\end{proposition}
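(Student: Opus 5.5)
First I define $\overline f$ at the level of residue spaces, then check properness and the Poisson property. By Lemma~\ref{lem:comparisonspaces}, applied with $\overline\cO$ in place of $\cO$, there is an identification
\[
 \overline{\widetilde\cL}\cong\bigl(\cX\times_{\gD^*}\overline{\widetilde\cJ}\bigr)/\GD,
 \qquad
 \overline{\widetilde\cJ}:=\GD\times^{P_M}\{(m,y)\in\zM\times\palg:\operatorname{pr}_{\lalg}(y)\in m+\overline\cO\}.
\]
The morphism $\nu$ of \eqref{eq:forgetparabolic}, extended to $\overline{\widetilde\cJ}$, sends $[h;(m,y)]\mapsto(m,\Ad(h)y)$. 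I would define $\overline f$ as the $\GD$-quotient of
\[
 \mathrm{id}_{\cX}\times\nu:\ \cX\times_{\gD^*}\overline{\widetilde\cJ}\longrightarrow\cX\times_{\gD^*}(\gD^*\times\zM)=\cX\times\zM,
\]
followed by $\ell\times\mathrm{id}$. This map is $\GD$-equivariant, so it descends to the quotients. Concretely, $\overline f$ sends a leveled Higgs field with a compatible parabolic reduction to the pair $\bigl((\mathcal E,\varphi),m\bigr)$.

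For properness, the key input is that $\overline{\widetilde\cJ}$ embeds as a closed subvariety of $\GD/P_M\times\zM\times\gD$, as in the discussion before Lemma~\ref{lem:inducedorbitcover}. It is cut out by the closed, $P_M$-stable condition $\operatorname{pr}_{\lalg}(y)-m\in\overline\cO$, $y\in\palg$. Since $\GD/P_M$ is projective, $\nu:\overline{\widetilde\cJ}\to\zM\times\gD$ is proper. Base change along $\mu\times\mathrm{id}$ gives that $\cX\times_{\gD^*}\overline{\widetilde\cJ}\to\cX\times\zM$ is proper. Properness then descends along the $\GD$-torsors $\cX\to\cP$ and $\cX\times_{\gD^*}\overline{\widetilde\cJ}\to\overline{\widetilde\cL}$, because properness is fpqc-local on the target and the square is cartesian after quotienting by a free action. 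More simply, on an étale chart $V$ trivializing the level bundle, $\overline f$ becomes
\[
 T^*V\times\bigl(\GD\times^{P_M}(\zM+\overline\cO+\nalg)\bigr)\longrightarrow T^*V\times\gD^*\times\zM,
\]
which is $\mathrm{id}\times\nu$ and hence proper.

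For the Poisson property, I would work in the local charts. The source chart is $T^*W\times\zM\times\overline\cO$ over $\cV\to\cBP$, from Proposition~\ref{prop:orbitclosurefamily}, and the target chart is $T^*V\times\gD^*\times\zM$ from Example~\ref{ex:localcotangentmodel}, with $\zM$ carrying the zero bracket. Directly comparing brackets in these coordinates is awkward because the two charts trivialize different bundles. The cleaner route is to go upstairs. The bracket on $\overline{\widetilde\cL}$ is induced from $T^*\cV$ by $M$-invariant functions, and $T^*\cV=\mu_N^{-1}(0)/N$ by Lemma~\ref{lem:cotangentN}. The bracket on $\cP$ is induced from $T^*\cU$ by $\GD$-invariant functions. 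Given a $\GD$-invariant function $F$ on $\cX$, its restriction to $\mu_N^{-1}(0)$ is $P_M$-invariant, so it descends to an $M$-invariant function on $T^*\cV$. Standard reduction in stages shows that the bracket of such restrictions is the restriction of the bracket: Hamiltonian vector fields of $N$-invariant functions are tangent to $\mu_N^{-1}(0)$, and the symplectic form on $T^*\cV$ pulls back to $i^*\omega_{LH}$. The pullbacks of the central coordinates $m$ are Casimirs on both sides. So $\overline f^*$ preserves brackets on functions of the form $\ell^*F$ and on functions of $m$, and these generate locally. The last step is compatibility with closures: restricting to the closed Poisson subspace $\mu_M^{-1}(\zM+\overline\cO)$ is legitimate by Proposition~\ref{prop:orbitclosurefamily}.

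I expect the main obstacle to be the descent of properness and of the Poisson identity to algebraic-space quotients, together with the check that invariant functions pulled back from $\cP$ generate enough to determine the bracket. I would handle both étale-locally via Proposition~\ref{prop:cotangentquotientcharts}, where everything becomes explicit products.
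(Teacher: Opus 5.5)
Your proposal is correct and follows essentially the same route as the paper. Properness comes from the closed embedding of $\overline{\widetilde\cJ}$ into $\GD/P_M\times\zM\times\gD$, base change along the moment map, and descent along the $\GD$-torsor. The Poisson property comes from lifting functions on $\cP$ to $\GD$-invariant functions on $\cX$, descending them through the reduction by $N$ and then by $M$, and using that the central coordinates are Casimirs.

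One small precision: inside the symplectic $T^*\cV$ the subset $\mu_M^{-1}(\zM+\overline\cO)$ is not itself a Poisson subspace. The Poisson subspace you restrict to is its quotient $\mu_M^{-1}(\zM+\overline\cO)/M\subset T^*\cV/M$, which is what Proposition~\ref{prop:orbitclosurefamily} provides.
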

\begin{proof}
Replacing $\cO$ by $\overline\cO$ gives the space of residues with a parabolic reduction:
\[
 \overline{\widetilde\cJ}
 :=\GD\times^{P_M}
 \{(m,y)\in\zM\times\palg:
     \operatorname{pr}_{\lalg}(y)\in m+\overline\cO\}.
\]
Its closed embedding $[h;(m,y)]\mapsto(hP_M,m,\Ad(h)y)$ into $\GD/P_M\times\zM\times\gD$ and projectivity of $\GD/P_M$ make
\[
 \overline\nu:\overline{\widetilde\cJ}
 \longrightarrow\zM\times\gD
\]
proper. The proof of Lemma~\ref{lem:comparisonspaces}, with $\overline\cO$ in place of $\cO$, then gives
\[
 \overline{\widetilde\cL}
 \cong
 \bigl((\cX\times\zM)
       \times_{\gD^*\times\zM}\overline{\widetilde\cJ}\bigr)/\GD.
\]
The invariant form identifies $\gD$ with $\gD^*$. After exchanging the target coordinates, base change of $\overline\nu$ and descent along the $\GD$-torsor $\cX\times\zM\to\cP\times\zM$ give the proper morphism $\overline f$.

A function from $\cP$ lifts to a $\GD$-invariant function on $\cX$, commuting with $\mu_N$. Its restriction descends through cotangent reduction by $N$, then through the $M$-quotient and the Poisson condition $\zM+\overline\cO$, preserving brackets. The central coordinates pull back to Casimirs, so $\overline f$ is Poisson.
\end{proof}

\begin{proposition}[Regular and zero orbit-closure fibers]\label{prop:closurecomparison}
For $m\in\zM^{\reg}$, $\overline f$ identifies its fiber scheme-theoretically with the reduced closed subspace
\begin{equation}\label{eq:regularclosurecomparison}
 \overline{\widetilde\cL}_m
 \xrightarrow{\sim}
 \mu^{-1}\bigl(\GD\cdot(m+\overline\cO)\bigr)/\GD.
\end{equation}
At zero it factors through a proper surjection
\begin{equation}\label{eq:zeroclosurecomparison}
 \overline f_0:\overline{\widetilde\cL}_0
 \longrightarrow\mu^{-1}(\overline{\mathscr O_0})/\GD,
 \qquad \mathscr O_0=\Ind_{\lalg}^{\gD}(\cO),
\end{equation}
of generalized Springer degree over the dense orbit reduction. Its inverse image of $\cL_{\mathscr O_0}$ is $\widetilde\cL_0^\circ$, and the restriction is the finite \'etale symplectic morphism $f_0$ of Theorem~\ref{thm:simultaneous}.
\end{proposition}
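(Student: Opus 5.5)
The plan is to reduce everything to the local residue spaces through the fiber-product description in the proof of Proposition~\ref{prop:propercomparison}. Write
\[
 \overline{\widetilde\cL}\cong\bigl((\cX\times\zM)\times_{\gD^*\times\zM}\overline{\widetilde\cJ}\bigr)/\GD .
\]
Base change along $\{m\}\hookrightarrow\zM$ then gives $\overline{\widetilde\cL}_m\cong(\cX\times_{\gD^*}\overline{\widetilde\cJ}_m)/\GD$, where $\overline{\widetilde\cJ}_m=\GD\times^{P_M}(m+\overline\cO+\nalg)$. The morphism $\overline f$ restricted to this fiber is the descent of $\mathrm{id}_\cX\times_{\gD^*}\overline\nu_m$ along the $\GD$-torsor $\cX\to\cP$. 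Every scheme-theoretic statement about $\overline f_m$ therefore reduces to the corresponding statement about $\overline\nu_m$, pulled back by the smooth surjective $\mu$ (Lemma~\ref{lem:musubmersion}) and descended. Properness, isomorphism, surjectivity and finite \'etaleness are all fpqc-local on the target and stable under base change.

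For $m\in\zM^{\reg}$, I would invoke the proof of Proposition~\ref{prop:residualsingularities}. There $\GD\times^{P_M}(m+\overline\cO+\nalg)\to\gD$ was shown to be an isomorphism onto the reduced closed subvariety $C_m=\GD\cdot(m+\overline\cO)$. The inverse is built from the regular Jordan projection $y\mapsto y_s=P(y)$ composed with $\GD/M\to\GD/P_M$, together with the uniform filtered inversion $N\times\overline\cO\cong m+\overline\cO+\nalg$. Pulling back by $\mu$ gives $\cX\times_{\gD^*}\overline{\widetilde\cJ}_m\cong\mu^{-1}(C_m)$ scheme-theoretically. Since $\mu$ is smooth, $\mu^{-1}(C_m)$ is reduced, and the isomorphism descends to \eqref{eq:regularclosurecomparison}.

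At $m=0$, the map $\overline\nu_0$ of the subsection on the finite cover is proper with image $\overline{\mathscr O_0}$. The first step is to check that its image is $\GD\cdot(\overline\cO+\nalg)=\overline{\mathscr O_0}$: it is closed by properness, irreducible, and contains the dense orbit $\mathscr O_0$. Base change and descent then give the proper surjection $\overline f_0$ onto $\mu^{-1}(\overline{\mathscr O_0})/\GD$; surjectivity is preserved because $\mu$ is surjective onto each fiber. The proof of Lemma~\ref{lem:inducedorbitcover} showed that $\overline\nu_0^{-1}(\mathscr O_0)=\widetilde{\mathscr O}_0$, since the boundary strata $\GD\times^{P_M}(\cO'+\nalg)$ have too small dimension to meet $\mathscr O_0$. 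Pulling back, $\overline f_0^{-1}(\cL_{\mathscr O_0})=(\cX\times_{\gD^*}\widetilde{\mathscr O}_0)/\GD=\widetilde\cL_0^\circ$. The restriction coincides with $f_0$ by the construction in Theorem~\ref{thm:simultaneous}, which gives the finite \'etale symplectic property and the generalized Springer degree over the dense orbit reduction.

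The main obstacle is the \emph{scheme-theoretic} part of the regular case. One must know that the proper morphism is a closed immersion onto the \emph{reduced} orbit closure, and not merely a bijection onto a possibly nonreduced image. I would handle this by exhibiting the explicit regular inverse on $C_m$ with its reduced structure, as in Proposition~\ref{prop:residualsingularities}. Reducedness of $\overline{\widetilde\cJ}_m\cong\GD\times^M\overline\cO$ then implies that the two reduced structures agree. Finally I would check that the formation of $\mu^{-1}(C_m)$ commutes with the base change used in the quotient charts of Proposition~\ref{prop:cotangentquotientcharts}.
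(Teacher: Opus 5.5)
Your proposal is correct and follows essentially the same route as the paper. In the regular case you use the isomorphism of Proposition~\ref{prop:residualsingularities} onto the reduced closed subvariety $\GD\cdot(m+\overline\cO)$, then smooth base change along $\mu$ and torsor descent; in the zero case you base change the proper generalized Springer morphism and use the dimension count from Lemma~\ref{lem:inducedorbitcover}.

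The only difference is cosmetic. The paper identifies the zero-fiber target as a scheme-theoretic image, using that such images commute with flat base change. You instead get the factorization directly from the reducedness of $\GD\times^{P_M}(\overline\cO+\nalg)$, which you should state explicitly in the zero case as you did in the regular case.
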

\begin{proof}
For regular $m$, Proposition~\ref{prop:residualsingularities} identifies $\overline{\widetilde\cJ}_m$ with its reduced closed image $\GD\cdot(m+\overline\cO)$. Thus smooth base change by $\mu$ and torsor descent give \eqref{eq:regularclosurecomparison}.

At $m=0$, the finite-dimensional map is the proper generalized Springer morphism
\[
 \GD\times^{P_M}(\overline\cO+\nalg)
 \longrightarrow\overline{\mathscr O_0}.
\]
The scheme-theoretic image is the smallest closed subscheme through which the morphism factors. Here it is $\overline{\mathscr O_0}$ with reduced structure: the source is reduced and its image contains $\mathscr O_0$. Scheme-theoretic images commute with flat base change \cite[Tag~081I]{Stacks}. Thus smooth base change by $\mu$ and torsor descent identify the target of \eqref{eq:zeroclosurecomparison} with the scheme-theoretic image of this fiber map. Its cotangent quotient charts show it is reduced.

By the dimension inequality of Lemma~\ref{lem:inducedorbitcover}, smaller Levi orbits cannot map into $\mathscr O_0$, whose inverse image is $\GD\times^{P_M}((\cO+\nalg)\cap\mathscr O_0)$. Consequently, base change and descent give $\widetilde\cL_0^\circ$ and its finite \'etale map of generalized Springer degree. Symplecticity follows from Theorem~\ref{thm:simultaneous}.
\end{proof}

The scheme-theoretic image $\mathcal D\subset\cP\times\zM$ is reduced because the source is reduced, and Poisson because $\overline f$ preserves brackets. Properness identifies its fiber supports with the fiber images; hence
\begin{equation}\label{eq:reducedimagezero}
 (\mathcal D_0)_{\mathrm{red}}
 =\mu^{-1}(\overline{\mathscr O_0})/\GD.
\end{equation}
\section{Variation of the Symplectic Class}\label{sec:periods}

In this section, we study the periods of the local residue family. In general, a connection on $\cV\to\cBP$ identifies the fibers of $q$ with a common differentiable manifold. Under this identification, the difference of their symplectic forms is minus the pullback of the curvature paired with the change of central parameter and its cohomology class is the Duistermaat--Heckman variation \cite{DH}.

Recall that the form $B_{\gD}$ restricted on $\lalg$ identifies $\lalg$ with $\lalg^*$.

\subsection{Covectors and a common differentiable manifold}

The principal $M$-bundle has projection $\pi_{P_M}:\cV\to\cBP$. Dividing the tangent sequence by $M$ gives
\[
 0\longrightarrow\cV\times_M\lalg
 \longrightarrow T\cV/M
 \longrightarrow T\cBP\longrightarrow0.
\]
The first term consists of tangent directions along the $M$-fibers. Since $\cBP$ is separated and of finite type, its analytic manifold admits partitions of unity. These split the sequence smoothly and complex-linearly. Pulling the splitting back to $\cV$ gives an $M$-equivariant connection
\[
 A\in\Omega^1_{C^\infty}(\cV)\otimes\lalg,
 \qquad A(X^\#)=X,
\]
for $X\in\lalg$, where $X^\#$ denotes the right fundamental vector field. For the complex structure $J$, it is $A(Jw)=iA(w)$. The horizontal bundle $\ker A$ complements the vertical bundle $\ker d\pi_{P_M}$.

The horizontal, equivariant curvature $F_A=dA+\tfrac12[A,A]$ descends to
\[
 F_A\in\Omega^2\bigl(\cBP,\cV\times_M\lalg\bigr).
\]
Its pairing with the invariant covector $B_{\gD}(m,-)$ is a complex two-form on $\cBP$.

A covector at $v\in\cV$ has a horizontal part $\beta\in T^*_{\pi_{P_M}(v)}\cBP$ and a vertical part $\xi\in\lalg^*$. Restriction to these directions therefore gives an $M$-equivariant smooth complex vector-bundle isomorphism
\begin{equation}\label{eq:cotangentsplitting}
 T^*\cV
 \cong
 \cV\times_{\cBP}T^*\cBP\times\lalg^*,
\end{equation}
with inverse
\begin{equation}\label{eq:inversecovector}
 \alpha_v(w)=\beta(d\pi_{P_M}(w))+\langle\xi,A(w)\rangle.
\end{equation}

The cotangent action is
\[
 (v,\beta,\xi)\cdot l
 =(vl,\beta,\Ad^*_{l^{-1}}\xi),\qquad l\in M.
\]
Lastly, evaluation on $X^\#$ identifies $\mu_M$ with $\xi$.

The condition $\mu_M\in m+\cO$ fixes $\xi=B_{\gD}(m+e,-)$, for $e\in\cO$, while $\beta$ remains free. Subtracting $m$ from the vertical coordinate, leaves the same set of triples $(v,\beta,e)$ for every parameter.

\begin{proposition}[Differentiable identification of the fibers]\label{prop:commonmodel}
The connection identifies every simultaneous reduction $\widetilde\cL_m$ with the same $C^\infty$ manifold
\begin{equation}\label{eq:Ymodel}
 Y
 =T^*\cBP\times_{\cBP}(\cV\times_M\cO).
\end{equation}
Under these identifications the family $\widetilde\cL\to\zM$ is globally $C^\infty$ trivial.
\end{proposition}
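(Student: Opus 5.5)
Using the connection $A$ fixed above, I will apply the $M$-equivariant smooth splitting \eqref{eq:cotangentsplitting} to describe the zero level $\mu_M^{-1}(m+\cO)$ explicitly, and then quotient by $M$. Since evaluation on fundamental vector fields identifies $\mu_M$ with the vertical coordinate $\xi$, the splitting carries $\mu_M^{-1}(m+\cO)$ onto the set of triples $(v,\beta,\xi)$ with $v\in\cV$, $\beta\in T^*_{\pi_{P_M}(v)}\cBP$, and $\xi=B_{\gD}(m+e,-)$ for some $e\in\cO$. I will define
\[
 \Phi_m:\mu_M^{-1}(m+\cO)\longrightarrow \cV\times_{\cBP}T^*\cBP\times\cO,
 \qquad (v,\beta,B_{\gD}(m+e,-))\longmapsto(v,\beta,e).
\]
Its inverse sends $(v,\beta,e)$ to the covector given by \eqref{eq:inversecovector} with $\xi=B_{\gD}(m+e,-)$. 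Both maps are smooth, because the splitting is a smooth bundle isomorphism and $e\mapsto m+e$ is a translation.

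Next I check $M$-equivariance. The cotangent action is $(v,\beta,\xi)\cdot l=(vl,\beta,\Ad^*_{l^{-1}}\xi)$. The element $m\in\zM$ is $\Ad(M)$-fixed, so $B_{\gD}(m,-)$ is $\Ad^*(M)$-fixed. Invariance of $B_{\gD}$ then gives $\Ad^*_{l^{-1}}B_{\gD}(m+e,-)=B_{\gD}(m+\Ad_{l^{-1}}e,-)$. Hence $\Phi_m$ intertwines the given action with $(v,\beta,e)\cdot l=(vl,\beta,\Ad_{l^{-1}}e)$, and this target action does not depend on $m$. The $M$-action on $\cV$ is free and proper, so it is free and proper on $\cV\times_{\cBP}T^*\cBP\times\cO$. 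Its smooth quotient is $T^*\cBP\times_{\cBP}(\cV\times_M\cO)=Y$, where $\beta$ descends because it is pulled back from $\cBP$. Passing to quotients, $\Phi_m$ induces a diffeomorphism $\widetilde\cL_m\cong Y$. The point to verify is that the $C^\infty$ structure on $\widetilde\cL_m$, which comes from the algebraic quotient of Proposition~\ref{prop:smoothfamily}, agrees with the quotient manifold structure. This holds because both are quotients of the same smooth free proper action; in the charts $T^*W\times\zM\times\cO$ the quotient map is a submersion with the $M$-orbits as fibers.

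For global triviality I run the same construction with $m$ varying. The map
\[
 \mu_M^{-1}(\zM+\cO)\to \zM\times\cV\times_{\cBP}T^*\cBP\times\cO,\qquad (v,\beta,B_{\gD}(m+e,-))\mapsto (m,v,\beta,e),
\]
is an $M$-equivariant diffeomorphism, with $M$ acting trivially on $\zM$. It uses $\zM+\cO\cong\zM\times\cO$, and this decomposition is $M$-invariant because $\zM\cap[\lalg,\lalg]=0$. Taking quotients gives a diffeomorphism $\widetilde\cL\cong\zM\times Y$ over $\zM$, which is the asserted $C^\infty$ trivialization, and its restriction to each fiber is $\Phi_m$.

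The main obstacle I anticipate is bookkeeping rather than substance. First, the splitting is only smooth, so I must use the smooth (not algebraic) quotient. Second, I must confirm that subtracting $m$ is compatible with the coadjoint action, which is exactly where centrality of $m$ enters.
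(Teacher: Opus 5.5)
Your proposal is correct and follows the paper's own argument. You split covectors via the connection, subtract $m$ from the vertical coordinate (equivariant because $M$ fixes $\zM$), and pass to $M$-quotients uniformly in $m$ to obtain $Y\times\zM\cong\widetilde\cL$ over $\zM$. Your added checks, the explicit equivariance computation and the agreement of the algebraic and $C^\infty$ quotient structures, are details the paper leaves implicit.
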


\begin{proof}
For $\beta\in T^*_{\pi_{P_M}(v)}\cBP$ and $e\in\cO$, formula~\eqref{eq:inversecovector} defines $\alpha_v$ with vertical covector $B_{\gD}(m+e,-)$. The map $[v,\beta,e]\mapsto[v,\alpha_v]$ is therefore a map from $Y$ to $\widetilde\cL_m$. Its inverse splits a covector as in \eqref{eq:cotangentsplitting} and subtracts $m$ from its vertical coordinate. Both constructions are $M$-equivariant and smooth in $m$, so they give $Y\times\zM\cong\widetilde\cL$ over $\zM$.
\end{proof}

In the sequel, the pulled-back reduced form on $Y$ will be denoted by $\Omega_m$, and $\pi_Y:Y\to\cBP$ is the projection.

\subsection{The reduced form and exactness at zero}

Covector evaluation in \eqref{eq:inversecovector} gives, on the splitting space,
\begin{equation}\label{eq:liouvillesplitting}
 \lambda_{\cV}=\lambda_{\cBP}+\langle\xi,A\rangle.
\end{equation}
Since $\omega=-d\lambda$,
\begin{equation}\label{eq:canonicalsplitting}
 \omega_{T^*\cV}
 =\omega_{T^*\cBP}-d\langle\xi,A\rangle.
\end{equation}

Translation by $m$ preserves the KKS form $\omega_{\cO}$ because $B_{\gD}(m,[X_1,X_2])=0$, for $X_i\in\lalg$. Thus, the form defining reduction in \eqref{eq:reducedform}, before the $M$-quotient, is
\begin{equation}\label{eq:minimalcoupling}
 \omega_{T^*\cBP}
 -d\langle B_{\gD}(m+e,-),A\rangle
 +\omega_{\cO}(e),\qquad e\in\cO.
\end{equation}
The moment-map and KKS identities make this form \emph{basic}, that is, it is invariant and vanishes whenever one argument is tangent to an $M$-orbit. It therefore descends to $\Omega_m$ on $Y$.

\begin{lemma}[Exactness at a nilpotent value]\label{lem:exactnilpotent}
The reduced symplectic forms on $\widetilde\cL_0$ and on $\cL_{\mathscr O_0}$, where $\mathscr O_0=\Ind_{\lalg}^{\gD}(\cO)$, are exact. In particular, $[\Omega_0]=0$ on the common manifold $Y$.
\end{lemma}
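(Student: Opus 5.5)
The plan is to exhibit an explicit primitive for each reduced form. The natural candidate is the tautological one-form corrected by a term built from the nilpotent residue, using the $\C^*$-scaling of nilpotent orbits.

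\textbf{Primitive on the Levi side.} On $\mu_M^{-1}(\cO)\subset T^*\cV$, which is the level set at $m=0$, the form before the $M$-quotient is, by \eqref{eq:reducedform} and Lemma~\ref{lem:cotangentN},
\[
 \beta_0=i^*\omega_{T^*\cV}+\mu_M^*\omega_{\cO}.
\]
The first term equals $-d(i^*\lambda_{\cV})$. For the second, the Euler vector field $E_e=e$ on the cone $\cO$ (tangent because $te\in\cO$ for $t\neq0$) gives
\[
 \omega_{\cO}=\tfrac12\,d\bigl(\iota_{E}\omega_{\cO}\bigr),
\]
via Cartan's formula, since $\mathcal L_E\omega_{\cO}=2\omega_{\cO}$ by the homogeneity $\omega_{t\cO}=t\,\omega_{\cO}$ under the scaling identification. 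This is where I would take the most care. The KKS form $B(e,[X,Y])$ under $e\mapsto te$, $X^\#\mapsto tX^\#$ scales by $t^2/t=t$; so one gets $\mathcal L_E\omega=\omega$ and hence $\omega_{\cO}=d(\iota_E\omega_{\cO})$. I would fix the constant by this direct computation. So $\beta_0=d\gamma$, where
\[
 \gamma=-i^*\lambda_{\cV}+c\,\mu_M^*\iota_E\omega_{\cO}
\]
for the appropriate constant $c$.

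\textbf{Descent of the primitive.} Next I would check that $\gamma$ is basic for the free $M$-action. Invariance is clear: $\lambda_{\cV}$ is invariant under cotangent lifts, and $E$ and $\omega_{\cO}$ are $M$-invariant. For horizontality at $X^\#$, compute $\lambda_{\cV}(X^\#)=\langle\mu_M,X\rangle=B(e,X)$, while $\iota_{X^\#}\mu_M^*\iota_E\omega_{\cO}=\omega_{\cO}(E,X^\#_{\cO})=\pm B(e,[\,\cdot\,,X])$ evaluated on the Euler direction. Writing $E_e=[Z,e]$ locally is not possible in general, so instead I would use the KKS identity $\iota_{X^\#}\omega_{\cO}=\pm d\langle\cdot,X\rangle$. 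This gives $\omega_{\cO}(X^\#,E)=\pm E\langle e,X\rangle=\pm B(e,X)$ by linearity. The two contributions then cancel for the correct constant $c$, consistent with the constant forced above. Hence $\gamma$ descends to a holomorphic primitive of $\Omega_0$ on $\widetilde\cL_0$.

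\textbf{The orbit reduction and the transfer to $Y$.} For $\cL_{\mathscr O_0}$ the same argument works verbatim with $\cU$, $\GD$ and the conical orbit $\mathscr O_0\subset\gD^*$, giving an exact form. Alternatively, exactness pulls back along $f_0$ by Theorem~\ref{thm:simultaneous}, but the direct argument also gives exactness downstairs. Finally, the identification of Proposition~\ref{prop:commonmodel} at $m=0$ is a diffeomorphism $Y\cong\widetilde\cL_0$, so the pulled-back $\Omega_0$ is $d$ of a smooth one-form and $[\Omega_0]=0$ in $H^2(Y,\C)$.

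I expect the only real obstacle to be the sign and constant bookkeeping in the Euler-field identity and its horizontality cancellation. Everything else is formal.
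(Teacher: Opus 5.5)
Your proposal is correct and is essentially the paper's argument carried out before the $M$-quotient rather than after it. With $c=1$, your primitive $-i^*\lambda_{\cV}+\mu_M^*\iota_E\omega_{\cO}$ is exactly $\iota_{\widetilde E}\beta_0$ for the fibrewise Euler field $\widetilde E$ of the dilation $(v,\alpha)\mapsto(v,t\alpha)$, i.e.\ the lift of the paper's primitive $\iota_E\Omega_0$; so your horizontality check is automatic from the basicness of $\beta_0$. (Your aside that $E_e=[Z,e]$ is impossible is wrong but unused: Jacobson--Morozov gives $e=[\tfrac12h,e]$, which is exactly how the conicity $te\in\cO$ you assume is justified.)
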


\begin{proof}
For a nilpotent $e$, the Jacobson--Morozov theorem gives a homomorphism $\lambda:\C^*\to M$ with $\Ad(\lambda(t))e=t^2e$ \cite{CM}. Every $t\in\C^*$ is a square, so $te$ lies in the same $M$-orbit as $e$. Thus every nilpotent orbit is preserved by $\C^*$-scaling. Consequently, the dilation $(v,\alpha)\mapsto(v,t\alpha)$ preserves $\mu_M^{-1}(\cO)$ and descends to $\widetilde\cL_0$. It multiplies the cotangent form by $t$, and the KKS form scales by the same factor because
\[
 B_{\gD}(te,[X_1,X_2])
 =tB_{\gD}(e,[X_1,X_2]).
\]
Hence, $t^*\Omega_0=t\Omega_0$. For the infinitesimal generator $E$ of this scaling action, Cartan's formula gives
\[
 \Omega_0=\mathcal L_E\Omega_0=d(\iota_E\Omega_0).
\]
The same scaling argument applies to the nilpotent orbit reduction $\cL_{\mathscr O_0}$.
\end{proof}

\subsection{Curvature and the period formula}

\begin{theorem}[Period variation]\label{thm:periodvariation}
Under the identifications of Proposition~\ref{prop:commonmodel},
\begin{equation}\label{eq:formvariation}
 \Omega_m-\Omega_{m_0}
 =-\pi_Y^*\langle B_{\gD}(m-m_0,-),F_A\rangle
\end{equation}
as closed complex-valued smooth two-forms on $Y$. Consequently,
\begin{equation}\label{eq:periodvariation}
 [\Omega_m]-[\Omega_{m_0}]
 =\langle m-m_0,c\rangle,
 \qquad m,m_0\in\zM,
\end{equation}
where $c\in H^2(Y,\C)\otimes\zM^*$ is the linear map characterized by
\[
 \langle m,c\rangle
 =-[\pi_Y^*\langle B_{\gD}(m,-),F_A\rangle].
\]
The class $c$ is independent of the connection, and $[\Omega_m]=\langle m,c\rangle$.
\end{theorem}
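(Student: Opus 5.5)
The plan is to compute the $m$-dependence of the pre-quotient form \eqref{eq:minimalcoupling} directly, then pass to cohomology. Under Proposition~\ref{prop:commonmodel}, the fiber $\widetilde\cL_m$ is identified with $Y$ by subtracting $m$ from the vertical coordinate $\xi$. The form \eqref{eq:minimalcoupling} is therefore the pullback along this identification of
\[
 \omega_{T^*\cBP}-d\langle B_{\gD}(e,-),A\rangle+\omega_{\cO}(e)-d\langle B_{\gD}(m,-),A\rangle,
\]
where only the last term depends on $m$. Since $m\in\zM$ is $\Ad(M)$-invariant, the function $\langle B_{\gD}(m,-),A\rangle$ is an $M$-invariant one-form on $\cV$. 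I would expand
\[
 d\langle B_{\gD}(m,-),A\rangle=\langle B_{\gD}(m,-),F_A\rangle-\tfrac12\langle B_{\gD}(m,-),[A,A]\rangle,
\]
and note that the bracket term vanishes because $B_{\gD}(m,[X_1,X_2])=0$ for $X_i\in\lalg$. The pairing $\langle B_{\gD}(m,-),F_A\rangle$ is horizontal and invariant, hence basic, and it descends to $\cBP$. Subtracting the expressions for $m$ and $m_0$ on the common pre-quotient space $\cV\times_{\cBP}T^*\cBP\times\cO$, and then descending, gives \eqref{eq:formvariation}. Both sides are basic and the quotient map is a submersion, so equality before the quotient implies equality on $Y$.

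For the cohomological statement, I would first observe that the right-hand side of \eqref{eq:formvariation} is closed. The form $\langle B_{\gD}(m,-),F_A\rangle=d\langle B_{\gD}(m,-),A\rangle$ is closed on $\cV$, and it descends, so it is closed on $\cBP$. The dependence on $m$ is linear, which defines $c$ and yields \eqref{eq:periodvariation}.

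Connection independence is the Chern--Weil step. For two connections $A,A'$, the difference $A'-A$ is a tensorial $\lalg$-valued one-form. Pairing it with the invariant covector $B_{\gD}(m,-)$ gives a basic one-form $\eta$ on $\cV$, and $\langle B_{\gD}(m,-),F_{A'}-F_A\rangle=d\eta$ on $\cBP$, again using that $m$ kills brackets in $\lalg$. The class of $\langle B_{\gD}(m,-),F_A\rangle$ is then a characteristic class of $\cV\times_M(M/[M,M])$, since the pairing factors through $\lalg\to\lalg/[\lalg,\lalg]\cong\zM$.

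There is one subtlety I expect to be the main obstacle: the identification $Y\cong\widetilde\cL_m$ itself depends on $A$. I would handle it by comparing the two identifications via the diffeomorphism $\Phi$ of $Y$ that shifts $\beta$ by the descent of $\langle B_{\gD}(m+e,-),A'-A\rangle$. This map covers the identity on $\cBP$ and is fiberwise an affine shift, so it is homotopic to the identity. Consequently the classes agree in $H^2(Y,\C)$ regardless of the choice of $A$.

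Finally, Lemma~\ref{lem:exactnilpotent} gives $[\Omega_0]=0$. Setting $m_0=0$ in \eqref{eq:periodvariation} then yields $[\Omega_m]=\langle m,c\rangle$.
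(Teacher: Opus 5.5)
Your proposal is correct and follows essentially the same route as the paper. The $m$-dependent part of \eqref{eq:minimalcoupling} is $-d\langle\delta,A\rangle=-\langle\delta,F_A\rangle$, because $\delta=B_{\gD}(m-m_0,-)$ kills $[\lalg,\lalg]$; this form is basic and descends to $\cBP$, Chern--Weil gives connection independence, and Lemma~\ref{lem:exactnilpotent} supplies $[\Omega_0]=0$. Your explicit fiberwise translation $\Phi$, comparing the two identifications of $Y$ with $\widetilde\cL_m$, is a concrete version of the paper's isotopy along the path $(1-s)A+sA'$.
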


\begin{proof}
Subtracting \eqref{eq:minimalcoupling} at $m_0$ from its value at $m$ gives $-d\langle\delta,A\rangle$, where $\delta=B_{\gD}(m-m_0,-)|_{\lalg}$.

We observe that invariance of $\delta$ and equivariance of $A$ give
\[
 \iota_{X^\#}d\langle\delta,A\rangle
 =\mathcal L_{X^\#}\langle\delta,A\rangle
 -d\langle\delta,X\rangle=0.
\]
Thus $d\langle\delta,A\rangle$ descends to $\cBP$.

Since $\delta([\lalg,\lalg])=0$, then 
\[
 \langle\delta,F_A\rangle
 =\langle\delta,dA+\tfrac12[A,A]\rangle
 =d\langle\delta,A\rangle.
\]
For the quotient map $r$ from the space of triples $(v,\beta,e)$ to $Y$,
\[
 r^*(\Omega_m-\Omega_{m_0})
 =-d\langle\delta,A\rangle
 =-r^*\pi_Y^*\langle\delta,F_A\rangle.
\]
Since $r$ is a surjective submersion, pullback is injective on forms, giving \eqref{eq:formvariation}. Taking cohomology classes and using $[\Omega_0]=0$ from Lemma~\ref{lem:exactnilpotent} gives the period formulas.

For another connection $A'$, the difference $a=A'-A$ is horizontal and equivariant, and
\[
 \langle\delta,F_{A'}-F_A\rangle=d\langle\delta,a\rangle.
\]
The invariant form $\langle\delta,a\rangle$ descends to $\cBP$, so the curvature classes differ by an exact form there. The path $(1-s)A+sA'$ gives an isotopy of the fiber identifications, hence the same identification in cohomology.
\end{proof}

\begin{example}[A one-parameter cotangent family]
\label{ex:oneparameterperiods}
For $\cO=0$ and $\zM=\C z$, $Y=T^*\cBP$ and $\Omega_0$ is canonical. Setting $m=az$ and $m_0=a_0z$ gives
\[
 \Omega_a-\Omega_{a_0}
 =-(a-a_0)\,\pi^*\langle B_{\gD}(z,-),F_A\rangle,
 \qquad \pi:T^*\cBP\to\cBP.
\]
For a two-cycle $\Sigma$ in the zero section,
\[
 \int_\Sigma(\Omega_a-\Omega_{a_0})
 =-(a-a_0)\int_\Sigma\langle B_{\gD}(z,-),F_A\rangle.
\]
In Example~\ref{ex:SL4running}, the central generator is $z=\operatorname{diag}(I_2,-I_2)$. For $a\ne0$ these are orbit reductions for regular parameters; at zero, the open subset of Theorem~\ref{thm:simultaneous} is the degree-one cover of the $[2^2]$ reduction.
\end{example}

\subsection{The characteristic class of the central torus}

The connected derived subgroup $M_{\der}=[M,M]$ has torus quotient
\[
 T_M=M/M_{\der}.
\]
The quotient map $Z(M)^\circ\to T_M$ is surjective with finite kernel, so it identifies $\Lie(T_M)$ with $\zM$. The form $B_{\gD}$ identifies $\zM$ with its dual and makes it orthogonal to $[\lalg,\lalg]$.

Quotienting $\cV\times\cO$ by $M_{\der}$ and then by $T_M$ gives
\[
 \cV\times_{M_{\der}}\cO
 \longrightarrow\cV\times_M\cO.
\]
The residual action is free: the fact that $vl=vd$, for $d\in M_{\der}$, forces $l=d$. Pullback to $Y$ therefore gives the principal $T_M$-bundle
\[
 \mathscr P
 =T^*\cBP\times_{\cBP}
 (\cV\times_{M_{\der}}\cO)
 \longrightarrow Y.
\]
Projection of $A$ to $\Lie(T_M)$ induces a connection with curvature $\pi_Y^*\overline F_A$, the projected curvature of $A$.

The integral characteristic class in $H^2(Y,X_*(T_M))$ pairs with each character $\chi:T_M\to\mathbb G_m$ to give the first Chern class $c_1(L_\chi)$ of
\[
 L_\chi=(\mathscr P\times\C)\big/
 \bigl((p,z)\sim(pt,\chi(t)z)\bigr).
\]
The character has differential $d\chi:\Lie(T_M)\to\C$. The induced connection on $L_\chi$ has curvature $-d\chi(\pi_Y^*\overline F_A)$, so
\[
 c_1(L_\chi)
 =\left[\frac{d\chi(\pi_Y^*\overline F_A)}{2\pi i}\right].
\]
Under $X_*(T_M)\otimes\C\cong\Lie(T_M)$, its complexification is
\[
 c(\mathscr P)
 \in H^2(Y,\C)\otimes\Lie(T_M),
 \qquad
 c(\mathscr P)=\left[\frac{\pi_Y^*\overline F_A}{2\pi i}\right].
\]
Thus pairing with $B_{\gD}(m,-)$ gives
\begin{equation}\label{eq:periodchern}
 [\Omega_m]-[\Omega_{m_0}]
 =-2\pi i\,
 \bigl\langle B_{\gD}(m-m_0,-),c(\mathscr P)\bigr\rangle.
\end{equation}

\subsection{Periods of the local residue family}\label{subsec:localperiods}

Starting from $T^*\GD$, reduction by $N$ at zero gives $T^*(\GD/N)$. The principal $M$-bundle $\GD/N\to\GD/P_M$ then gives the reduction at $m+\cO$, which is $\widetilde\cJ_m$. A connection identifies these fibers with
\[
 Y_{\mathrm{loc}}=T^*(\GD/P_M)
 \times_{\GD/P_M}(\GD\times_{P_M}\cO).
\]
The $P_M$-action on $\cO$ factors through $M$. The principal $T_M$-bundle $(\GD/N\times\cO)/[M,M]\to\GD\times_{P_M}\cO$ pulls back to $Y_{\mathrm{loc}}$; its complexified characteristic class is denoted by $c_{\mathrm{loc}}$.

\begin{corollary}[Local periods]\label{cor:localperiods}
The residue fibers have holomorphic symplectic forms $\omega_m$ satisfying
\begin{equation}\label{eq:localperiodvariation}
 [\omega_m]=-2\pi i\,
 \langle B_{\gD}(m,-),c_{\mathrm{loc}}\rangle.
\end{equation}
For regular $m$, $\nu_m:\widetilde\cJ_m\to\GD\cdot(m+e)$ is symplectic.
\end{corollary}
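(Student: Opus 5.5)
The plan is to rerun the global argument of this section with the torsor base $\cB$ collapsed to a point, so that $\cU$ becomes $\GD$ with its right action. Since $T^*\GD$ carries the moment map $(h,\alpha)\mapsto\alpha(h\,\cdot)$, the cotangent reduction is as before. First, reduce by $N$ at zero as in Lemma~\ref{lem:cotangentN} to get $T^*(\GD/N)$ with Levi moment map $\mu_M$. Then reduce by $M$ at $m+\cO$. Lemma~\ref{lem:comparisonspaces}, read with a point in place of $\cB$, identifies this reduction with $\widetilde\cJ_m=\GD\times^{P_M}(m+\cO+\nalg)$. The orbit-reduction argument of Theorem~\ref{thm:allleaves}, applied to the free $M$-action on $\GD/N$, gives its holomorphic symplectic form $\omega_m$.

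For the period formula, I will choose an $M$-equivariant connection $A$ on $\GD/N\to\GD/P_M$. A holomorphic invariant choice exists, since $\GD/P_M$ is compact and we can use the $\ad(\lalg)$-stable complement $\nalg^-\oplus\nalg$ compatible with $\gD=\lalg\oplus(\nalg\oplus\nalg^-)$; a smooth partition-of-unity choice also suffices. With $A$ fixed, Proposition~\ref{prop:commonmodel} identifies every $\widetilde\cJ_m$ with $Y_{\mathrm{loc}}$. The minimal coupling formula \eqref{eq:minimalcoupling} and the curvature argument of Theorem~\ref{thm:periodvariation} then give
\[
 [\omega_m]-[\omega_{m_0}]=-2\pi i\,\langle B_{\gD}(m-m_0,-),c_{\mathrm{loc}}\rangle,
\]
exactly as in \eqref{eq:periodchern}, with the principal $T_M$-bundle over $Y_{\mathrm{loc}}$ in place of $\mathscr P$. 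To remove the constant term, I will apply Lemma~\ref{lem:exactnilpotent}: fiberwise dilation on $T^*(\GD/N)$ preserves $\mu_M^{-1}(\cO)$ because $\cO$ is conical, so $\omega_0$ is exact. Setting $m_0=0$ then yields \eqref{eq:localperiodvariation}.

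For the last assertion, take $m$ regular. Theorem~\ref{thm:parabolicfibers} shows that $\nu_m$ is an isomorphism onto $\GD\cdot(m+e)$. To show it is symplectic, I will compare $\omega_m$ with the pullback of the KKS form. Reducing $T^*\GD$ at the orbit $\GD\cdot(m+e)$ returns that orbit with its KKS form, as in the standard identity $T^*\GD/\!\!/_{\mathscr O}\GD\cong\mathscr O$, the case $\cB=\mathrm{pt}$ of \eqref{eq:reducedform}. Lemma~\ref{lem:parabolicforms} shows that the KKS form restricted to $S_m=m+\cO+\nalg$ is $\operatorname{pr}_\cO^*\omega_\cO$, and this is exactly the Levi orbit term used in reducing by $M$. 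The cotangent terms agree by the local version of Lemma~\ref{lem:cotangentN}. So the two reduced forms coincide before quotienting, and the argument of Theorem~\ref{thm:simultaneous} carries over verbatim.

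The main obstacle is the bookkeeping in this last comparison: one has to check that reduction in stages ($N$ at $0$, then $M$ at $m+\cO$) agrees with reduction of $T^*\GD$ by $\GD$ at the full orbit. The existence of the identification follows from Lemma~\ref{lem:unipotentorbit}, so the issue is the forms. I would verify this directly at $h=1$ using the description of tangent vectors to $S_m$ as $[A,y]$ with $A\in\palg$, as in Lemma~\ref{lem:parabolicforms}. $\GD$-equivariance then extends the equality of forms everywhere.
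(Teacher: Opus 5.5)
Your proposal is correct and is essentially the paper's proof. Like the paper, you run the covector-splitting, curvature, and scaling arguments of Section~\ref{sec:periods} for the principal $M$-bundle $\GD/N\to\GD/P_M$ over the trivial torsor $\GD\to\Spec\C$, and then repeat the comparison from Theorem~\ref{thm:simultaneous} to conclude that $\nu_m$ is symplectic.

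The only slip is your parenthetical claim that a holomorphic $\GD$-invariant connection exists. First, the projection $\gD/\nalg\to\lalg$ along $\nalg^-$ is not $\Ad(N)$-invariant; for example, $\Ad(\exp sE)F\equiv F+sH \pmod{\nalg}$ in $\mathfrak{sl}_2$. Second, a holomorphic connection would make $\langle B_{\gD}(m,-),F_A\rangle$ a holomorphic two-form on the flag variety $\GD/P_M$, hence zero. That would force $[\omega_m]=[\omega_0]=0$ for all $m$, which is already false for a regular semisimple orbit in $\mathfrak{sl}_2$. So you should rely only on the smooth partition-of-unity connection, as the paper does.
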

\begin{proof}
The covector splitting and reduction-form calculation of this section apply to any principal $M$-bundle. Thus they give the smooth symplectic fibers and \eqref{eq:periodchern}; the scaling argument of Lemma~\ref{lem:exactnilpotent} gives $[\omega_0]=0$.

The symplectic comparison in the proof of Theorem~\ref{thm:simultaneous} applies to the principal $\GD$-bundle $\GD\to\Spec\C$. For regular $m$, it identifies the reduced form with the KKS form on $\GD\cdot(m+e)$, so $\nu_m$ is symplectic.
\end{proof}

\subsection{Comparison with the induced-orbit reduction}

The differentiable identifications $\psi_m:Y\to\widetilde\cL_m$ determine the open subset
\[
 U=\psi_0^{-1}(\widetilde\cL_0^\circ)\subset Y
\]
which carries the algebraic structure transported from $\widetilde\cL_0^\circ$ to $U$. The composition of $\psi_0|_U$ with the zero-fiber map of Theorem~\ref{thm:simultaneous} gives $f_0:U\to\cL_{\mathscr O_0}$. For regular $m$, the composition of $\psi_m|_U$ with the regular-fiber isomorphism gives $j_m$.

\begin{corollary}[Comparison with the induced-orbit reduction]
\label{cor:inducedperiod}
For every $m\in\zM^{\reg}$ and $e\in\cO$, there is a differentiable open embedding
\[
 j_m:U\longrightarrow\cL_{\GD\cdot(m+e)}
\]
and, in $H^2(U,\C)$,
\begin{equation}\label{eq:inducedperiod}
 [j_m^*\Omega_{\GD\cdot(m+e)}]
 -f_0^*[\Omega_{\mathscr O_0}]
 =-\left[\pi_Y^*\langle B_{\gD}(m,-),F_A\rangle|_U\right].
\end{equation}
The map $f_0$ is finite \'etale and symplectic, of generalized Springer degree. In degree one it identifies $U$ with the induced-orbit reduction.
\end{corollary}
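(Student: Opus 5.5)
The plan is to assemble the corollary from Theorem~\ref{thm:simultaneous}, Proposition~\ref{prop:commonmodel} and Theorem~\ref{thm:periodvariation}, with no new geometric input. First, for $m\in\zM^{\reg}$, define $j_m$ as the composite of $\psi_m|_U:U\to\widetilde\cL_m$ with the symplectomorphism $\widetilde\cL_m\xrightarrow{\sim}\cL_{\GD\cdot(m+e)}$ of Theorem~\ref{thm:simultaneous}. The map $\psi_m$ is a diffeomorphism $Y\to\widetilde\cL_m$, by the global $C^\infty$ triviality in Proposition~\ref{prop:commonmodel}. The set $U$ is open in $Y$ because $\widetilde\cL_0^\circ$ is Zariski open in $\widetilde\cL_0$ and $\psi_0$ is a homeomorphism. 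Hence $j_m$ is a differentiable open embedding. By construction, $j_m^*\Omega_{\GD\cdot(m+e)}=\Omega_m|_U$, where $\Omega_m$ denotes the transported reduced form on $Y$.

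Second, the zero fiber. Theorem~\ref{thm:simultaneous} gives $f_0^*\Omega_{\mathscr O_0}=\Omega_0|_{\widetilde\cL_0^\circ}$, and we transport this along $\psi_0$. We then restrict formula \eqref{eq:formvariation} with $m_0=0$ to $U$:
\[
 \Omega_m|_U-\Omega_0|_U=-\pi_Y^*\langle B_{\gD}(m,-),F_A\rangle|_U .
\]
This is an identity of closed forms, so passing to $H^2(U,\C)$ yields \eqref{eq:inducedperiod} directly. We should not invoke $[\Omega_0]=0$ here, because the statement keeps $f_0^*[\Omega_{\mathscr O_0}]$ explicitly. It is consistent nonetheless: Lemma~\ref{lem:exactnilpotent} shows this class vanishes, since $f_0^*$ of an exact form is exact.

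Third, the properties of $f_0$ (finite étale, symplectic, generalized Springer degree, isomorphism in degree one) are exactly those in Theorem~\ref{thm:simultaneous}. The only work is to note that the algebraic structure on $U$ was defined by transport through $\psi_0$. Composing with $\psi_0|_U$ therefore preserves these properties tautologically.

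The one point needing care is bookkeeping rather than a real obstacle. We must check that the $\Omega_m$ appearing in Theorem~\ref{thm:periodvariation} is the pullback of the reduced form on $\widetilde\cL_m$ under the same $\psi_m$ used to define $j_m$. Both are defined through the covector splitting \eqref{eq:inversecovector} with the fixed connection $A$, so they agree. We must also confirm that the restriction $H^2(Y,\C)\to H^2(U,\C)$ commutes with the pullbacks involved, which is just naturality of de Rham cohomology.
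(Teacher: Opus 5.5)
Your proposal is correct and is essentially the paper's argument. Like the paper, you transport the two symplecticity identities $f_0^*\Omega_{\mathscr O_0}=\Omega_0|_U$ and $j_m^*\Omega_{\GD\cdot(m+e)}=\Omega_m|_U$ from Theorem~\ref{thm:simultaneous} to $U$ via $\psi_0$ and $\psi_m$, subtract them, apply \eqref{eq:formvariation} at $m_0=0$, and pass to classes in $H^2(U,\C)$. Your extra bookkeeping (openness of $U$, consistency of $\psi_m$ with the $\Omega_m$ of Theorem~\ref{thm:periodvariation}, and not needing $[\Omega_0]=0$) only makes explicit what the paper leaves implicit.
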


\begin{proof}
Symplecticity in Theorem~\ref{thm:simultaneous} gives
\[
 f_0^*\Omega_{\mathscr O_0}=\Omega_0|_U
\]
and, for regular $m$,
\[
 j_m^*\Omega_{\GD\cdot(m+e)}=\Omega_m|_U.
\]
Subtracting these identities and applying \eqref{eq:formvariation} at $m_0=0$ therefore gives the stated equality after taking classes on $U$.
\end{proof}

\section{Residue Families and Examples}\label{sec:centralexamples}\label{sec:examples}

In this final section, we present more explicitly residue families in certain cases. In the examples that follow, there is always considered one hyperspecial marked point, therefore it is $\gD=\g$. Moreover, $\cB$, $(P_M,\cO)$, and $\mathscr O_0=\Ind_{\lalg}^{\gD}(\cO)$ will be as in the preceding sections. The case of several marked points is specially treated in Section~\ref{rem:multipoint}.

\subsection{Dimension and period conditions}

\begin{corollary}[Dimension and periods]\label{thm:massconditions}
For $m\in\zM^{\reg}$ and $e\in\cO$, the residue orbit is $\mathscr O_m=\GD\cdot(m+e)$. Then
\[
 \dim\mathscr O_m=\dim\mathscr O_0
 =\dim\gD-\dim\lalg+\dim\cO,
 \qquad
 \dim\widetilde\cL_m=2\dim\cB+\dim\mathscr O_0.
\]
On the manifold $Y$ of \eqref{eq:Ymodel}, we have
\[
 [\Omega_m]=-2\pi i\,
 \langle B_{\gD}(m,-),c(\mathscr P)\rangle.
\]
Thus the family of Theorem~\ref{thm:simultaneous} is mass-like in the sense of Definition~\ref{def:linearperiods}. Its comparison with $\cL_{\mathscr O_0}$ is given by the finite cover $f_0$ and the pullback formula \eqref{eq:inducedperiod}.
\end{corollary}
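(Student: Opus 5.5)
The corollary assembles results already established, so the plan is to cite them in order and check that the pieces match Definition~\ref{def:linearperiods}.

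\emph{Orbit dimensions.} For $m\in\zM^{\reg}$, Proposition~\ref{prop:centralizer} gives $\dim\mathscr O_m=\dim\gD-\dim\lalg+\dim\cO$. Proposition~\ref{prop:induceddimension} gives the same value for $\mathscr O_0=\Ind_{\lalg}^{\gD}(\cO)$. Equation~\eqref{eq:simultaneousdimension} of Proposition~\ref{prop:smoothfamily} then yields $\dim\widetilde\cL_m=2\dim\cB+\dim\mathscr O_0$ for every $m\in\zM$, not only regular $m$.

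\emph{Period formula.} Proposition~\ref{prop:commonmodel} identifies all fibers with the common manifold $Y$. Theorem~\ref{thm:periodvariation} gives $[\Omega_m]-[\Omega_{m_0}]=\langle m-m_0,c\rangle$, and Lemma~\ref{lem:exactnilpotent} gives $[\Omega_0]=0$. Setting $m_0=0$ in \eqref{eq:periodchern} gives $[\Omega_m]=-2\pi i\,\langle B_{\gD}(m,-),c(\mathscr P)\rangle$. This step rests on the identification of $\langle B_{\gD}(m,-),F_A\rangle$ with the projected curvature $\overline F_A$. That identification holds because $B_{\gD}(m,-)$ kills $[\lalg,\lalg]$ for $m\in\zM$, so pairing with $F_A$ factors through the projection $\lalg\to\zM=\Lie(T_M)$.

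\emph{Mass-like condition.} The right-hand side is complex-linear in $m$, so $m\mapsto[\Omega_m]-[\Omega_0]$ is linear under the $C^\infty$ trivialization $Y\times\zM\cong\widetilde\cL$. The family is smooth with symplectic fibers by Proposition~\ref{prop:smoothfamily}. Together with the dimension equality above, this verifies both clauses of Definition~\ref{def:linearperiods}.

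\emph{Comparison with $\cL_{\mathscr O_0}$.} The statement about the zero fiber is Theorem~\ref{thm:simultaneous}, which supplies the finite \'etale symplectic map $f_0$, together with Corollary~\ref{cor:inducedperiod}, which supplies \eqref{eq:inducedperiod}.

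The only point needing real care is the bookkeeping of the sign and the factor $2\pi i$ between $F_A$ and $c(\mathscr P)$, together with the claim that the projection to $\zM$ loses no information. I would recheck the curvature convention for $L_\chi$ used just before \eqref{eq:periodchern}, and confirm that $B_{\gD}$ restricts nondegenerately to $\zM$ with $\zM\perp[\lalg,\lalg]$. Everything else is direct citation.
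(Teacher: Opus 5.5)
Your proposal is correct and matches the paper's proof: the dimensions come from Propositions~\ref{prop:centralizer}, \ref{prop:induceddimension} and~\ref{prop:smoothfamily}, the period formula is \eqref{eq:periodchern} at $m_0=0$ using Lemma~\ref{lem:exactnilpotent}, and the comparison comes from Corollary~\ref{cor:inducedperiod}. Your remark that $B_{\gD}(m,-)$ annihilates $[\lalg,\lalg]$, so that pairing with $F_A$ only sees the projected curvature $\overline F_A$, is the step the paper uses implicitly when passing from \eqref{eq:periodvariation} to \eqref{eq:periodchern}.
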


\begin{proof}
The dimensions follow from Propositions~\ref{prop:centralizer}, \ref{prop:induceddimension}, and~\ref{prop:smoothfamily}. The period formula is \eqref{eq:periodchern} with $m_0=0$, using Lemma~\ref{lem:exactnilpotent}. Corollary~\ref{cor:inducedperiod} finally gives the comparison.
\end{proof}

\subsection{Orbit and leaf dimensions}

For a regular residue $m+e$ associated with $(\lalg,\cO)$, the two orbit dimensions in \cite[Section~6]{BD} are
\begin{align}
 d_{\rm ss}&:=\dim\GD\cdot m=\dim\gD-\dim\lalg,
 \label{eq:BDssdimension}\\
 d&:=\dim\GD\cdot(m+e)=d_{\rm ss}+\dim\cO
 =\dim\Ind_{\lalg}^{\gD}(\cO).
 \label{eq:BDtotaldimension}
\end{align}
For a sheet with this dense Jordan class, its dimension as a variety is
\begin{equation}\label{eq:sheetversusorbitdimension}
 \dim S=d+\dim\zM,
\end{equation}
because $\zM^{\reg}/Q_M$ parametrizes the orbits. Globally, the corresponding leaf dimension is
\begin{equation}\label{eq:BDgloballeafdimension}
 \dim\cL_{\mathscr O_m}=2\dim\cB+d.
\end{equation}
In the table that follows, $r$ is the sheet rank and the Levi type is the type of $[\lalg,\lalg]$. The orbit $\cO_{\min}^{D_4}=\mathcal O_{[2^2,1^4]}$ is the nonzero nilpotent orbit of smallest dimension in $\mathfrak{so}_8$.

\vspace{3mm}

\begin{center}
\small
\renewcommand{\arraystretch}{1.15}
\begin{tabular}{c|c|c|c|c|c|c}
type & Levi type & $\cO$ & $\mathscr O_0$ & $r$ & $d_{\rm ss}$ & $d$ \\\hline
$A_6$ & $A_5$ & $0$ & $[2,1^5]$ & $1$ & $12$ & $12$ \\
$A_6$ & $A_2+2A_1$ & $0$ & $[3^2,1]$ & $2$ & $32$ & $32$ \\
$D_5$ & $D_4$ & $[2^2,1^4]$ & $[3^2,1^4]$ & $1$ & $16$ & $26$ \\
$D_5$ & $A_1+D_3$ & $0$ & $[3^2,1^4]$ & $1$ & $26$ & $26$ \\
$E_8$ & $A_3+A_2+A_1$ & $0$ & $E_8(b_6)$ & $2$ & $220$ & $220$ \\
$E_8$ & $D_4+A_2$ & $\cO_{\min}^{D_4}\times0$ & $E_8(b_6)$ & $2$ & $210$ & $220$
\end{tabular}
\end{center}

\vspace{3mm}

For a curve $X$ with genus $g_X$, the tangent space at $\mathcal E$ is $H^1(X,\mathcal E(\g))$, while regular stability gives $H^0=0$, and $\deg\mathcal E(\g)=0$. Riemann--Roch therefore gives
\begin{equation}\label{eq:hyperspecialbasedimension}
 \dim\cB=(g_X-1)\dim G.
\end{equation}

\subsection{Type \texorpdfstring{$A$}{A}: block sizes and periods}

For $\gD=\mathfrak{sl}_n$ and $n_1+\cdots+n_r=n$, the block Levi in the $\mathrm{SL}_n$ cover of $\mathrm{PGL}_n$ is
\[
 M(n_1,\ldots,n_r)=S(\mathrm{GL}_{n_1}\times\cdots\times \mathrm{GL}_{n_r})
 =\{(A_i):\textstyle\prod_i\det A_i=1\}.
\]
Its Lie algebra satisfies $\sum_i\operatorname{tr}X_i=0$, has dimension $\sum_i n_i^2-1$, and has center dimension $r-1$.

Every sheet in type $A$ is Dixmier, so its nilpotent orbit in the Levi is zero. A central parameter is the block-scalar matrix
\[
 m=\operatorname{diag}(a_1I_{n_1},\ldots,a_rI_{n_r}),
 \qquad\sum_i n_i a_i=0.
\]
Regularity means that the $a_i$ are pairwise distinct. Then $d=n^2-\sum_i n_i^2$, and induction gives $\mathcal O_{\nu^{\mathsf t}}$, where $\nu$ is the decreasing rearrangement of $(n_i)$. The group $Q_M$ permutes equal-sized blocks.

In $\mathfrak{sl}_7$, block sizes $(6,1)$ and $(3,2,2)$ give the induced partitions $[2,1^5]$ and $[3^2,1]$, with orbit dimensions $49-37=12$ and $49-17=32$. Their center dimensions are one and two. The first datum has $Q_M=1$; for the second, $Q_M\simeq\mathfrak S_2$ exchanges the two size-two blocks.

For the $(2,2)$ Levi in $\mathrm{SL}_4$, Example~\ref{ex:SL4running} gives $d=8$ and one central parameter. The sheet dimension is equal to 9, while the global leaf dimension is $30(g_X-1)+8$. More generally, the leaf dimension for the block sizes $(n_i)$ is 
$$2(g_X-1)(n^2-1)+n^2-\sum_i n_i^2.$$

Since the nilpotent orbit is zero, the manifold $Y$ in \eqref{eq:Ymodel} is $T^*\cBP$. The variation formula becomes
\[
 \Omega_m=\omega_{\mathrm{can}}
 -\pi^*\langle B_{\gD}(m,-),F_A\rangle,
\]
where $\pi:T^*\cBP\to\cBP$ is the cotangent projection, $\omega_{\mathrm{can}}$ is its canonical symplectic form, and $F_A$ is the curvature of the connection used in Section~\ref{sec:periods}.

\subsection{Two type-\texorpdfstring{$D_5$}{D5} families with the same induced orbit}
\label{subsec:D5mass}

Here $\gD=\mathfrak{so}_{10}$ has dimension $45$, and the orbit notation refers to $\mathrm{SO}_{10}$. A subspace is \emph{isotropic} if the defining symmetric form vanishes on it.

The first family uses the parabolic stabilizing an isotropic line and the datum
\[
 \lalg=\mathfrak{gl}_1\oplus\mathfrak{so}_8,
 \qquad\cO=\mathcal O_{[2^2,1^4]}\subset\mathfrak{so}_8.
\]
Here $\dim\lalg=29$. Using the transpose $[6,2]$ and four odd parts in \eqref{eq:typeDorbitdimension} gives 
$$\dim\cO=28-\tfrac12(6^2+2^2-4)=10,$$ so $d_{\rm ss}=16$ and $d=26$.

For this Levi, orthogonal induction adds two to the largest part and then takes $D$-collapse:
\[
 [2,2,1^4]\longmapsto[4,2,1^4]\longmapsto[3,3,1^4].
\]
The induced partition has transpose $[6,2,2]$ and six odd parts, giving dimension 
$$45-\tfrac12(6^2+2^2+2^2-6)=26.$$

The central coordinate $a$ has $Q_M\simeq\mathbb Z/2$, acting by $a\mapsto-a$. The involution is realized in $\mathrm{SO}_{10}$ by exchanging the two isotropic lines in their nondegenerate two-dimensional span and applying an orthogonal reflection on its eight-dimensional complement. The two determinant signs cancel. The resulting conjugation preserves the unique $\mathrm{SO}_8$-orbit with partition $[2^2,1^4]$. A normalizer can only preserve or exchange the two nonzero eigenspaces, so these are all parameter conjugacies.

The second family uses the parabolic stabilizing an isotropic two-plane and the datum
\[
 \lalg'=\mathfrak{gl}_2\oplus\mathfrak{so}_6,
 \qquad\cO'=0.
\]
The Levi dimension is $4+15=19$, so $d_{\rm ss}=d=45-19=26$. Induction gives the same Richardson orbit $\mathcal O_{[3^2,1^4]}$. Exchanging the two isotropic two-planes gives $Q_{M'}\simeq\mathbb Z/2$ on its one-dimensional center.

The central coordinates $a,b$ are normalized so that the first semisimple element has eigenvalues $(a,0^8,-a)$ and the second has eigenvalues $(b,b,0^6,-b,-b)$. The nilpotent element of the first family acts on its eight-dimensional zero-eigenspace. Hence their characteristic polynomials are respectively
\[
 t^8(t^2-a^2),\qquad t^6(t^2-b^2)^2.
\]
If these are equal, cancellation of $t^6$ and comparison of constant coefficients give $b^4=0$. Thus $b=0$, and comparison of the remaining coefficients gives $a=0$. Consequently, their images in the adjoint quotient $\gD\sslash\GD:=\Spec\C[\gD]^{\GD}$ are distinct curves intersecting exactly at the value of zero.

\subsubsection*{Counting compatible parabolic reductions}

The vector space $V$ has dimension 10 over $\C$ and a nondegenerate symmetric form $(\ ,\ )$, and $x\in\mathfrak{so}(V)$ has Jordan type $[3^2,1^4]$. The two size-three blocks give $x^3=0$, $\operatorname{rank}x=4$, and $\operatorname{rank}x^2=2$. We write $E=\operatorname{im}x^2$. Skew-adjointness of $x$ makes $x^2$ self-adjoint, and $x^4=0$ shows that $E$ is an isotropic two-plane for the original form.

A second symmetric form on $E$ is given by
\[
 b(x^2v,x^2w)=(v,x^2w).
\]
It is well-defined because $\ker x^2=E^\perp$. It is nondegenerate: a vector $x^2v$ pairing to zero with every $x^2w$ satisfies $v\in E^\perp=\ker x^2$, so $x^2v=0$. The notation $\perp_b$ denotes orthogonality in $E$ with respect to $b$.

\begin{lemma}[Compatible lines]\label{lem:D5lines}
The reductions for the isotropic-line parabolic are exactly the two $b$-isotropic lines in $E$.
\end{lemma}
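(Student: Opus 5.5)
The parabolic stabilizing an isotropic line $\ell\subset V$ has Levi $\mathfrak{gl}_1\oplus\mathfrak{so}(\ell^\perp/\ell)$ and nilradical $\nalg$. The plan is to translate compatibility into linear algebra in three steps: list the conditions a compatible line must satisfy, show they force $\ell$ to be a $b$-isotropic line of $E$, and then check the converse.

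\textbf{Conditions for compatibility.} A residue $x$ admits a compatible reduction to $\ell$ exactly when the following hold:
\begin{itemize}
\item $x\in\palg$, i.e.\ $x\ell\subset\ell$;
\item the $\mathfrak{gl}_1$-component of $x$ vanishes, which is the eigenvalue of $x$ on $\ell$ (this is automatic since $x$ is nilpotent), so $x\ell=0$;
\item the endomorphism $\bar x$ induced by $x$ on the eight-dimensional space $\ell^\perp/\ell$ lies in $\overline{\cO}=\overline{\mathcal O_{[2^2,1^4]}}$.
\end{itemize}
Because $x$ lies in the induced orbit, Lemma~\ref{lem:inducedorbitcover} shows that the Levi part of $x$ lies in $\cO$ itself. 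So $\bar x$ has Jordan type exactly $[2^2,1^4]$, hence $\bar x^2=0$ and $\operatorname{rank}\bar x=2$.

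\textbf{Necessity: compatible lines are $b$-isotropic lines of $E$.} Take $\ell=\langle u\rangle$ compatible. From $xu=0$ we get $u\in\ker x=(\operatorname{im}x)^\perp$. The condition $\bar x^2=0$ gives $x^2(\ell^\perp)\subset\ell$. Since $\ell^\perp$ has codimension one and $\operatorname{rank}x^2=2$, the image $x^2(\ell^\perp)$ is nonzero. Granting that, we get $x^2(\ell^\perp)=\ell$, so $\ell\subset E$. Writing $u=x^2w$ with $w\in\ell^\perp$, we have $b(u,u)=(w,x^2w)=(w,u)=0$, so $\ell$ is $b$-isotropic.

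It remains to justify $x^2(\ell^\perp)\neq0$. If $x^2(\ell^\perp)=0$, then $\ell^\perp\subset\ker x^2=E^\perp$. This gives $E\subset\ell$, which contradicts $\dim E=2$.

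\textbf{Sufficiency: each $b$-isotropic line of $E$ is compatible.} Let $\ell=\langle x^2w\rangle$ be $b$-isotropic. Then:
\begin{itemize}
\item $x\ell=x^3w=0$, so $x\ell=0$;
\item $\ell\subset E$ is isotropic for $(\ ,\ )$, since $E$ is isotropic;
\item for $v\in\ell^\perp$ we have $(v,x^2w)=0$, i.e.\ $b(x^2v,x^2w)=0$, so $x^2v\in\ell^{\perp_b}=\ell$ (as $\ell$ is $b$-isotropic in the two-dimensional $(E,b)$). Hence $\bar x^2=0$.
\end{itemize}
For the rank of $\bar x$, I would compute $\operatorname{rank}\bar x=\dim x(\ell^\perp)-\dim\bigl(x(\ell^\perp)\cap\ell\bigr)$. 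Here $x(\ell^\perp)$ has dimension $3$ or $4$, because $\ell^\perp$ has codimension one in $V$ and $\operatorname{rank}x=4$. The dimension formula for induction then forces the Levi part to be a nilpotent element of $\mathfrak{so}_8$ of dimension at least $\dim\cO$. Combined with $\bar x^2=0$ and the $D$-type constraints, this leaves exactly $[2^2,1^4]$, since the other square-zero types ($[2^4]$, with two orbits) have larger dimension and would not be compatible with the orbit count. Alternatively, it suffices to exhibit $\bar x\neq0$, provided rank $4$ is excluded.

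The main obstacle I expect is this last rank verification. The cleanest route is probably a normal form: choose a basis adapted to the two $3$-blocks of $x$, together with a hyperbolic pairing between them. Then $E$ is spanned by the two chain tops, $b$ is a hyperbolic form on $E$, and the two $b$-isotropic lines are the two chain tops $\langle e_1\rangle$ and $\langle f_1\rangle$. In each case a direct computation shows that $\bar x$ on $\ell^\perp/\ell$ has Jordan type $[2^2,1^4]$. Finally, $(E,b)$ is a nondegenerate quadratic plane over $\C$, so it has exactly two isotropic lines. This yields the statement, and the generalized Springer degree $2$.
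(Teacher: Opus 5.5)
Your necessity direction is correct. Your $b$-isotropy step (writing $u=x^2w$ with $w\in\ell^\perp$, so $b(u,u)=(w,u)=0$) is a slightly more direct variant of the paper's step, which instead shows $x^2(\ell^\perp)=\ell^{\perp_b}$ by a dimension count. In the converse, your checks of $x\ell=0$, isotropy of $\ell$, and $\bar x^2=0$ are also fine.

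The gap is the last step of the converse: showing that $\bar x$ has type $[2^2,1^4]$ rather than $[2^4]$. Your dimension argument does not do this. Induction only gives $\dim\cO'\geq\dim\cO=10$ for the orbit $\cO'$ of $\bar x$ in $\mathfrak{so}_8$. Both orbits of type $[2^4]$ have dimension $12$, so they pass this test, and the appeal to an ``orbit count'' adds no further constraint. In fact $\mathcal O_{[3^2,1^4]}$ lies in the closure of $\mathcal O_{[3^2,2^2]}$, the orbit induced from $[2^4]$. So neither closure nor dimension considerations rule out a Levi part of type $[2^4]$ a priori. The normal-form route you mention does work: for $\ell=\langle e_1\rangle$, the quotient $\ell^\perp/\ell$ is spanned by the images of $e_2,e_3,f_1,f_2$ and the four one-blocks, and $\bar x$ has chains $e_3\mapsto e_2$ and $f_2\mapsto f_1$. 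As written, however, this is only asserted, and it needs the hyperbolic normal form for orthogonal nilpotents as input.

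The missing idea is short and fits directly into your own formula $\operatorname{rank}\bar x=\dim x(\ell^\perp)-\dim\bigl(x(\ell^\perp)\cap\ell\bigr)$.
First, $\ker x\subset\ker x^2=E^\perp\subset\ell^\perp$, so $\dim x(\ell^\perp)=9-6=3$; this already excludes rank $4$.
Second, on each size-three block $x$ maps $\ker x^2$ onto $\operatorname{im}x^2$, so $x(\ker x^2)=E\supset\ell$. Since $\ker x^2\subset\ell^\perp$, this gives $\ell\subset x(\ell^\perp)$.
Hence $\operatorname{rank}\bar x=3-1=2$. A square-zero endomorphism of rank two on the eight-dimensional space $\ell^\perp/\ell$ has type $[2^2,1^4]$. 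This is exactly how the paper completes the converse; with it inserted, your proof is complete.
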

\begin{proof}
For the first parabolic, a reduction above $x$ is an isotropic line $\ell$ preserved by $x$, with zero residue on $\ell$, such that the induced endomorphism of $\ell^\perp/\ell$ has Jordan type $[2^2,1^4]$. Thus $x\ell=0$, and the induced endomorphism must have square zero. The latter condition is $x^2(\ell^\perp)\subset\ell$. A rank-two linear map restricted to a hyperplane has rank at least one; therefore this inclusion forces $\ell\subset E$.

For $\ell\subset E$, the identity $b(x^2v,w)=(v,w)$ for $w\in E$ gives $x^2(\ell^\perp)\subset\ell^{\perp_b}$. Since $\ker x^2=E^\perp\subset\ell^\perp$, the left side has dimension $9-8=1$, and the right side is a line. Hence they are equal. Thus the square-zero condition is equivalent to $\ell=\ell^{\perp_b}$, or $b$-isotropy.

Conversely, a $b$-isotropic line in $E$ satisfies $x\ell=0$. Since $\ker x\subset E^\perp\subset\ell^\perp$, the restriction of $x$ to $\ell^\perp$ has rank $9-6=3$. On each size-three Jordan block, $x$ maps $\ker x^2$ onto $\operatorname{im}x^2$; the size-one blocks contribute zero. Hence $x(\ker x^2)=E$. Since $\ker x^2\subset\ell^\perp$, we have $\ell\subset x(\ell^\perp)$. Quotienting by $\ell$ lowers the rank to two. A square-zero endomorphism of the eight-dimensional space $\ell^\perp/\ell$ with rank two has two size-two Jordan blocks and four size-one blocks, giving type $[2^2,1^4]$.

A nondegenerate symmetric form on a two-dimensional complex space has exactly two isotropic lines. There are therefore two reductions.
\end{proof}

\begin{lemma}[The compatible two-plane]\label{lem:D5plane}
The isotropic-two-plane parabolic has exactly one compatible reduction, $E=\operatorname{im}x^2$.
\end{lemma}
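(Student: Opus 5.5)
We have to show two things: $E=\operatorname{im}x^2$ is a compatible reduction for the isotropic-two-plane parabolic, and it is the only one. A compatible reduction for the datum $(\lalg',0)$ is an isotropic two-plane $W$ with three properties: $xW\subset W$, $x|_W=0$, and the induced endomorphism of $W^\perp/W$ is zero. In other words, $x$ must lie in the nilradical of the stabilizer of $W$. Since $\cO'=0$, the Levi residue vanishes on both $\mathfrak{gl}_2$ and $\mathfrak{so}_6$. These conditions are equivalent to
\[
 xW=0,\qquad x(W^\perp)\subset W,\qquad x(V)\subset W^\perp.
\]
The last condition follows from the first by skew-adjointness, because $(xv,w)=-(v,xw)=0$ for $w\in W$. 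So the conditions reduce to $W\subset\ker x$ and $x(W^\perp)\subset W$.

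\textbf{Uniqueness.} Suppose $W$ is compatible. Then $x^2(W^\perp)\subset xW=0$, so $W^\perp\subset\ker x^2=E^\perp$. Taking orthogonal complements gives $E\subset W$. Both are two-planes, so $W=E$.

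\textbf{Existence.} We check that $E$ satisfies both conditions.
\begin{itemize}
\item $E$ is isotropic: this was shown before Lemma~\ref{lem:D5lines}.
\item $xE=x^3V=0$, so $E\subset\ker x$.
\item For $x(E^\perp)\subset E$, write $E^\perp=\ker x^2$. On each size-three Jordan block, $x$ maps $\ker x^2$ into $\operatorname{im}x^2$, and the size-one blocks are killed by $x$. Hence $x(\ker x^2)=E$, exactly as observed in the proof of Lemma~\ref{lem:D5lines}.
\end{itemize}
Therefore $E$ is a compatible reduction, and there is exactly one.

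The only step needing care is translating "compatible reduction" into the three linear conditions. This follows Lemma~\ref{lem:comparisonspaces}: the residue must lie in $\cO'+\nalg'=\nalg'$, the nilradical of the stabilizer of $W$. The rest is linear algebra. As a consistency check, the generalized Springer degree for $(\lalg',0)$ is one, so by Theorem~\ref{thm:simultaneous} the zero-fiber map $f_0$ is an isomorphism for this family. By contrast, Lemma~\ref{lem:D5lines} gives degree two for the line family.
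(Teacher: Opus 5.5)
Your proof is correct and follows the paper's argument: the same reduction to $xW=0$ and $x(W^\perp)\subset W$, and the same existence check via $xE=0$, $E^\perp=\ker x^2$, $x(\ker x^2)=E$. The only difference is in uniqueness: you obtain $E\subset W$ dually, from $x^2(W^\perp)=0$ and $\ker x^2=E^\perp$, whereas the paper uses $xV\subset W^\perp$ to get $x^2V\subset x(W^\perp)\subset W$ directly.
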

\begin{proof}
For the second parabolic, a reduction is an isotropic two-plane $W$ with zero residue on $W$ and on $W^\perp/W$. These conditions are $xW=0$ and $x(W^\perp)\subset W$. The first, together with skew-adjointness, implies $xV\subset W^\perp$. Applying $x$ once more gives $E=x^2V\subset W$, so $W=E$. This plane does satisfy the conditions: $xE=0$, $E^\perp=\ker x^2$, and $x(\ker x^2)=E$. Thus there is exactly one reduction.
\end{proof}

\begin{proposition}[The two type-$D_5$ degrees]\label{prop:D5degrees}
For the parabolic stabilizing an isotropic line, with Levi $\mathfrak{gl}_1\oplus\mathfrak{so}_8$ and nilpotent orbit $\mathcal O_{[2^2,1^4]}$, the generalized Springer degree is two. For the parabolic stabilizing an isotropic two-plane, with Levi $\mathfrak{gl}_2\oplus\mathfrak{so}_6$ and zero nilpotent orbit, it is one. Thus the first open zero fiber gives a double cover of $\cL_{\mathcal O_{[3^2,1^4]}}$, and the second gives an isomorphism onto that leaf.
\end{proposition}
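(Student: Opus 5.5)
The plan is to read off the degrees from the fiber counts of Lemmas~\ref{lem:D5lines} and~\ref{lem:D5plane}, using the description of the generalized Springer degree in Lemma~\ref{lem:inducedorbitcover}. That lemma says $\nu_0:\widetilde{\mathscr O}_0\to\mathscr O_0$ is a finite \'etale surjection from a homogeneous $\GD$-space. By equivariance, every fiber has the same cardinality, namely the degree. The fiber over a point $x$ consists of the points $[h;y]\in\GD\times^{P_M}S_0$ with $\Ad(h)y=x$. Such a point is the same thing as a parabolic $hP_Mh^{-1}$ containing $x$ for which the Levi component of $x$, read in that parabolic, lies in the prescribed $M$-orbit $\cO$ (up to the $\nalg$-part). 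So the first step is to translate, for each of the two parabolics in $\mathrm{SO}_{10}$, a conjugate of $P_M$ into the flag it stabilizes. For the first parabolic this is an isotropic line $\ell$. For the second it is an isotropic two-plane $W$. We check that the residue conditions become exactly those used in the two lemmas. For the line: $x\ell\subset\ell$, the residue on $\ell$ is $0$ (the $\mathfrak{gl}_1$-component must vanish, since $m=0$), and the induced endomorphism of $\ell^\perp/\ell$ has type $[2^2,1^4]$. For the plane: the $\mathfrak{gl}_2$- and $\mathfrak{so}_6$-components both vanish. Here one should remark that the quotient $V/\ell^\perp\cong\ell^*$ automatically carries the dual residue, so nothing more is imposed.

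The second step is to check that $\mathcal O_{[3^2,1^4]}$ really is $\mathscr O_0$ in both cases. This was computed in \S\ref{subsec:D5mass} via the induction $[2^2,1^4]\mapsto[3^2,1^4]$ and the Richardson computation for $\mathfrak{gl}_2\oplus\mathfrak{so}_6$. It guarantees that $x$ of type $[3^2,1^4]$ lies in $\mathscr O_0$, so its full fiber is counted. Lemma~\ref{lem:D5lines} then gives exactly two compatible reductions (the two $b$-isotropic lines of $E=\operatorname{im}x^2$), and Lemma~\ref{lem:D5plane} gives exactly one. Hence the degrees are $2$ and $1$.

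The main subtlety I expect is precisely the identification of ``compatible reduction'' with the flag conditions. In particular, there is the parity issue in type $D$. The orbit $[2^2,1^4]$ in $\mathfrak{so}_8$ is a single orbit, not a very even pair, so ``type $[2^2,1^4]$'' suffices. Likewise $[3^2,1^4]$ is a single $\mathrm{SO}_{10}$-orbit. I also need the isotropic two-planes to form one $\mathrm{SO}_{10}$-orbit, which holds since only isotropic $5$-planes split into two families. Once these points are checked, the count is exact and not merely generic.

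Finally, the global statement follows from Theorem~\ref{thm:simultaneous} and Proposition~\ref{prop:closurecomparison}. There, $f_0:\widetilde\cL_0^\circ\to\cL_{\mathcal O_{[3^2,1^4]}}$ is finite \'etale symplectic of generalized Springer degree. So it is a double cover in the first case, and in the second case, being of degree one, it is an isomorphism onto the leaf.
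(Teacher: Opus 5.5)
Your proposal is correct and follows the same route as the paper. The degrees come from the reduction counts of Lemmas~\ref{lem:D5lines} and~\ref{lem:D5plane}; Lemma~\ref{lem:inducedorbitcover} makes the maps over $\mathcal O_{[3^2,1^4]}$ finite \'etale, so these counts are the degrees; and Theorem~\ref{thm:simultaneous} transports them to $f_0$. Your additional checks are details the paper leaves implicit in the lemma statements: that $\GD/P_M$ exhausts all isotropic lines or two-planes, that $[2^2,1^4]$ and $[3^2,1^4]$ are single orbits, and that the flag conditions match the Levi-residue conditions.
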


\begin{proof}
Lemmas~\ref{lem:D5lines} and~\ref{lem:D5plane} count the compatible reductions. Lemma~\ref{lem:inducedorbitcover} makes both maps over the induced orbit finite \'etale. The two counts therefore give their degrees. Theorem~\ref{thm:simultaneous} transports them to the global reductions.
\end{proof}

\subsection{Two families through \texorpdfstring{$E_8(b_6)$}{E8(b6)}}\label{subsec:E8families}

In $\mathfrak e_8$, of dimension $248$, the orbit $E_8(b_6)$ has dimension $220$. This orbit is induced by the following two Levi data, also listed in \cite[Section~6.3.3]{BD}.

The first datum consists of a Levi $\lalg_1$ of derived type $A_3+A_2+A_1$ and $\cO_1=0$. Its center has dimension two and $\dim\lalg_1=15+8+3+2=28$, so $d_{\rm ss}=d=220$.

The second datum has a Levi $\lalg_2$ of derived type $D_4+A_2$ and nilpotent orbit
\[
 \cO_2=\mathcal O_{\min}^{D_4}\times\{0\}_{A_2},
 \qquad\mathcal O_{\min}^{D_4}=\mathcal O_{[2^2,1^4]}.
\]
The minimal $D_4$ orbit is rigid of dimension $10$. The center again has dimension two, and $\dim\lalg_2=28+8+2=38$, giving $d_{\rm ss}=210$ and $d=220$.

The exceptional induction table from \cite[Section~4, Table ``Induced orbits in type $E_8$,'' row $E_8(b_6)$]{dGE} gives
\[
 \Ind_{\lalg_1}^{\mathfrak e_8}(0)
 =\Ind_{\lalg_2}^{\mathfrak e_8}
 (\mathcal O_{\min}^{D_4}\times0)=E_8(b_6).
\]
Consequently, both families have global leaf dimension $496(g_X-1)+220$ and a two-dimensional central parameter space. Their regular orbit closures are locally products with $\{0\}$ and $\overline{\mathcal O}_{\min}^{D_4}$, respectively, by Proposition~\ref{prop:residualsingularities}.

\appendix

\section{Parahoric Lattices and Several Marked Points}\label{sec:parahoric-formulas}
\subsection{The local parahoric group and its reductive quotient}

As reviewed in Section \ref{sec:level-leaves}, the local construction of a parahoric group scheme involves a smooth complex projective curve $X$, a divisor $D=\{x\}$, and a connected complex semisimple group $G$ with Lie algebra $\g$. The parahoric data consist of a maximal torus $T\subset G$, with Lie algebra $\talg=\Lie(T)$, and a rational weight $\theta\in X_*(T)\otimes_{\mathbb Z}\mathbb Q$, where $X_*(T)=\operatorname{Hom}(\mathbb G_m,T)$ is the cocharacter lattice. Here $\alpha(\theta)$ is the root--cocharacter pairing. The affine root hyperplanes $\alpha(\theta)=k$, $k\in\mathbb Z$, cut the real cocharacter space into \emph{facets}, including their faces. The facet containing $\theta$ determines a parahoric group scheme $\mathcal G_\theta$, constant with fiber $G$ away from $x$ \cite[Section~2.1]{KSZ}.

For a coordinate $z$ at $x$, its group on the formal disc is $G_\theta\subset G(\C((z)))$. The root system is denoted by $\Phi$, with root spaces $\g_\alpha$. An $\alpha$-root coefficient $z^k$ is allowed when $k+\alpha(\theta)\geq0$. Thus the Lie lattice is
\begin{equation}\label{eq:parahoricroots}
 \mathfrak g_\theta:=\Lie(G_\theta)
 =\talg[[z]]\oplus
 \bigoplus_{\alpha\in\Phi}
 z^{\lceil-\alpha(\theta)\rceil}\g_\alpha[[z]].
\end{equation}
This \emph{parahoric lattice} is a free $\C[[z]]$-module of rank $\dim G$ spanning $\g((z))$.

The pro-unipotent radical $G_\theta^+$ is the kernel of reduction to the connected reductive quotient
\[
 L_\theta=G_\theta/G_\theta^+.
\]
The parahoric root exponent is $n_\alpha=\lceil-\alpha(\theta)\rceil$. The root exponent in $\mathfrak g_\theta^+:=\Lie(G_\theta^+)$ is
\[
 n_\alpha^+=
 \begin{cases}
 n_\alpha+1,&\alpha(\theta)\in\mathbb Z,\\
 n_\alpha,&\alpha(\theta)\notin\mathbb Z,
 \end{cases}
\]
and its toral summand is $z\talg[[z]]$. Thus the quotient Lie algebra is the pseudo-Levi algebra
\[
 \Lie(L_\theta)
 \cong\Lie Z_G\bigl(\exp(2\pi i\theta)\bigr)\subset\g.
\]
Its roots satisfy $\alpha(\theta)\in\mathbb Z$. A \emph{pseudo-Levi algebra} is the Lie algebra centralizer of a semisimple group element.

Since $G$ is assumed semisimple, $Z(G)$ is finite. The effective group of level changes is
\[
 \GD=L_\theta/Z(G),\qquad
 \gD=\Lie(\GD)=\Lie(L_\theta).
\]
This connected reductive \emph{level group} has the same adjoint and coadjoint actions as $L_\theta$. The reduction map
\[
 \rho_\theta:\mathfrak g_\theta\longrightarrow\gD
\]
has kernel $\mathfrak g_\theta^+$ and is equivariant for $G_\theta\to\GD$.

We fix a nondegenerate $G$-invariant symmetric form $B_\g$. Its restriction $B_{\gD}$ identifies $\gD\cong\gD^*$: it pairs the toral summand nondegenerately with itself and each root space with its opposite. The notation
\[
 \mathfrak c_D=\gD^*\sslash\GD,
 \qquad\chi_D:\gD^*\longrightarrow\mathfrak c_D
\]
denotes the affine quotient by invariant polynomial functions.

When $|\alpha(\theta)|<1$ for every root, the roots with $\alpha(\theta)\geq0$ define a parabolic $Q_0=M_0U_0$ containing $T$. Its Levi factor $M_0$ has roots with $\alpha(\theta)=0$, and its unipotent radical $U_0$ has roots with $\alpha(\theta)>0$. Their Lie algebras are $\mathfrak q_0$, $\mathfrak m_0$, and $\mathfrak u_0$. The root exponents in \eqref{eq:parahoricroots} are zero or one, giving
\begin{equation}\label{eq:nearorigin}
 \mathfrak g_\theta=\mathfrak q_0+z\g[[z]],
 \qquad
 \mathfrak g_\theta^+=\mathfrak u_0+z\g[[z]].
\end{equation}
Thus $\rho_\theta(A_0+zA_1+\cdots)$ is the image of $A_0$ in $\mathfrak q_0/\mathfrak u_0\cong\gD$. For arbitrary weights, reduction uses the shifted root coefficients in \eqref{eq:parahoricroots}.

\begin{example}[A block calculation of the Levi residue]\label{ex:SL3leviresidue}
For the standard $(2,1)$ parabolic of $\mathrm{SL}_3$, an allowed constant coefficient in \eqref{eq:nearorigin} is
\[
 A_0=\begin{pmatrix}A&b\\0&c\end{pmatrix},
 \qquad A\in\operatorname{Mat}_{2\times2}(\C),\quad b\in\C^2,
 \quad\operatorname{tr}(A)+c=0.
\]
Here $\mathfrak u_0$ consists of the upper-right blocks, $\mathfrak m_0=\mathfrak s(\mathfrak{gl}_2\oplus\mathfrak{gl}_1)$, and
\[
 \rho_\theta(A_0+zA_1+\cdots)
 =\begin{pmatrix}A&0\\0&c\end{pmatrix}.
\]
The Higgs field $(A_0+zA_1+\cdots)\,dz/z$ has matrix residue $A_0$ and Levi residue $\operatorname{diag}(A,c)$.
\end{example}

\begin{lemma}[Duality of the parahoric lattices]
\label{lem:parahoricpairing}
The invariant form gives a perfect $\C[[z]]$-pairing
\[
 (u,v)\longmapsto z^{-1}B_\g(u,v):
 \mathfrak g_\theta^+\times\mathfrak g_\theta\longrightarrow\C[[z]].
\]
Consequently,
\begin{equation}\label{eq:leveldual}
 \mathcal E(\g)_\eta^\vee\otimes K_X
 \cong\mathcal E(\g)\otimes K_X(D).
\end{equation}
\end{lemma}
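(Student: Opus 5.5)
The plan is to reduce the pairing to a root-by-root check using the decomposition \eqref{eq:parahoricroots} together with the analogous description of $\mathfrak g_\theta^+$. The form $B_\g$ pairs $\talg$ nondegenerately with itself and $\g_\alpha$ nondegenerately with $\g_{-\alpha}$, and it is zero otherwise. The form $z^{-1}B_\g$, extended $\C((z))$-bilinearly, therefore splits as an orthogonal sum of pairings between the toral lattices and between the pairs of lattices $z^{n^+_\alpha}\g_\alpha[[z]]$ (from $\mathfrak g_\theta^+$) and $z^{n_{-\alpha}}\g_{-\alpha}[[z]]$ (from $\mathfrak g_\theta$). A pairing $z^a\C[[z]]\times z^b\C[[z]]\to\C((z))$, $(f,g)\mapsto z^{-1}fg$, lands in $\C[[z]]$ and is perfect exactly when $a+b=1$. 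The whole statement thus reduces to the exponent identity $n^+_\alpha+n_{-\alpha}=1$ for every root, together with $1+0=1$ on the torus, since the toral summand of $\mathfrak g_\theta^+$ is $z\talg[[z]]$.

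Next I verify the exponent identity. Put $c=\alpha(\theta)$, so that $n_\alpha=\lceil -c\rceil$ and $n_{-\alpha}=\lceil c\rceil$.
\begin{itemize}
\item If $c\in\mathbb Z$, then $n^+_\alpha=-c+1$ and $n_{-\alpha}=c$, so the sum is $1$.
\item If $c\notin\mathbb Z$, then $n^+_\alpha=\lceil -c\rceil=-\lfloor c\rfloor$ and $n_{-\alpha}=\lceil c\rceil=\lfloor c\rfloor+1$, so the sum is again $1$.
\end{itemize}
Each summand pairing is perfect, and the lattices are free of finite rank, so the direct sum gives a perfect $\C[[z]]$-pairing $\mathfrak g_\theta^+\times\mathfrak g_\theta\to\C[[z]]$.

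For the sheaf statement \eqref{eq:leveldual}, I globalize as follows. Away from $x$, both $\mathcal E(\g)_\eta$ and $\mathcal E(\g)$ equal the ordinary adjoint bundle, which is self-dual via $B_\g$, and $K_X(D)=K_X$ there. Near $x$, a frame compatible with $\eta$ identifies the completed stalks with $\mathfrak g_\theta^+$ and $\mathfrak g_\theta$. The local pairing shows that $\mathcal E(\g)^\vee_\eta$ is $\mathcal E(\g)$ twisted by $z^{-1}$, that is, $\mathcal E(\g)^\vee_\eta\cong\mathcal E(\g)\otimes\cO_X(D)$. The map is globally defined by $B_\g$ and is an isomorphism on completed stalks, so it is an isomorphism of locally free sheaves. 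Tensoring with $K_X$ gives \eqref{eq:leveldual}.

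The main obstacle is the bookkeeping of ceilings in the non-integral case and confirming that $\mathfrak g_\theta^+$ has exactly the stated root exponents. I take the latter from the appendix description rather than reproving it.
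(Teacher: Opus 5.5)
Your proposal is correct and follows essentially the same route as the paper's proof. Both reduce perfectness to the exponent identity $n_\alpha^+ + n_{-\alpha} = 1$ (with $1+0$ on the toral summand) using orthogonality of nonopposite root spaces, and both then identify $\mathcal E(\g)_\eta^\vee$ with $z^{-1}\mathcal E(\g)=\mathcal E(\g)\otimes\cO_X(D)$ at $x$ and use self-duality of the adjoint bundle away from $x$ before tensoring with $K_X$.
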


\begin{proof}
The exponents satisfy $n_\alpha^++n_{-\alpha}=1$: for integral $\alpha(\theta)$, $n_\alpha=-\alpha(\theta)$ and $n_\alpha^+=n_\alpha+1$; otherwise the two ceilings sum to one and $n_\alpha^+=n_\alpha$. Hence $z^{-1}B_\g$ pairs opposite root lattice generators by units. It also pairs $z\talg[[z]]$ perfectly with $\talg[[z]]$, while nonopposite root spaces pair to zero.

At $x$ this identifies the dual of the level-preserving sheaf with $z^{-1}\mathcal E(\g)$. Tensoring with $dz$ gives $\mathcal E(\g)\,dz/z$. Thus frame invariance and the usual adjoint duality away from $x$ give \eqref{eq:leveldual}.
\end{proof}

\subsection{The smaller parahoric subgroup}

\begin{remark}[Residues for the smaller parahoric]\label{rem:parahoricembedding}
The inverse image of $P_M$ under $G_\theta\to L_\theta\to\GD$ is the formal-disc group of a smaller parahoric $\mathcal G_{\theta'}$ \cite[Section~8.b]{PR}, with Lie lattice and effective reductive quotient
\[
 \mathfrak g_{\theta'}=\rho_\theta^{-1}(\palg)\subset\mathfrak g_\theta,
 \qquad M.
\]
Thus $\cBP=\cU/P_M$ parametrizes these parabolic reductions over $\cB$, and the fiber at $m$ has residue condition
\[
 \operatorname{pr}_{\lalg}\Res_x(\varphi)\in m+\cO,
 \quad\text{equivalently}\quad
 \Res_x(\varphi)\in m+\cO+\nalg.
\]
For a hyperspecial parahoric, the inclusion is $\palg+z\g[[z]]\subset\g[[z]]$.
\end{remark}
\subsection{Several marked points}\label{rem:multipoint}

A reduced divisor $D=\{x_1,\ldots,x_s\}$ carries fixed weights $\theta_i$ and levels, with local reductive Lie algebras $\mathfrak g_{D,i}=\Lie(L_{\theta_i})$. The levels on a fixed torsor $\mathcal E$ form a $\prod_iL_{\theta_i}$-torsor. Global central automorphisms act by the same element at every marked point. Therefore, for semisimple $G$, the effective level group and its Lie algebra are
\[
 \GD=\left(\prod_iL_{\theta_i}\right)/\Delta Z(G),
 \qquad
 \Lie(\GD)=\bigoplus_i\mathfrak g_{D,i},
\]
where $\Delta Z(G)$ is the diagonal image. Since $Z(G)$ is finite, it does not change the Lie algebra. The deformation sequence has quotient $\bigoplus_i i_{x_i*}\mathfrak g_{D,i}$; its dual gives the coresidue moment map
\[
 \boldsymbol\mu=(\mu_1,\ldots,\mu_s).
\]
For coadjoint orbits $\mathscr O_i\subset\mathfrak g_{D,i}^*$, the product orbit $\boldsymbol{\mathscr O}=\prod_i\mathscr O_i$ gives
\[
 \cL_{\boldsymbol{\mathscr O}}
 =\boldsymbol\mu^{-1}\!\left(\prod_i\mathscr O_i\right)/\GD,
 \qquad
 \dim\cL_{\boldsymbol{\mathscr O}}
 =2\dim\cB+\sum_i\dim\mathscr O_i.
\]
On a chart trivializing the level bundle,
\[
 \cP|_V\cong T^*V\times\bigoplus_i\mathfrak g_{D,i}^*,
 \qquad
 \cL_{\boldsymbol{\mathscr O}}|_V
 \cong T^*V\times\prod_i\mathscr O_i.
\]
Consequently, the components of total rank $2\dim\cB+d$ come from products of local sheets $S_i$ of orbit dimensions $d_i$ with $\sum_i d_i=d$.

For reductive $G$ with positive-dimensional center, $\Lie(\GD)=(\bigoplus_i\mathfrak g_{D,i})/\Delta\zalg(\g)$. Its dual consists of tuples annihilating the diagonal center:
\[
 \sum_i\xi_i|_{\zalg(\g)}=0.
\]
This is the residue theorem for the central Higgs component. Local sheet products are intersected with this subspace before reduction.

\begin{example}[Two hyperspecial vertices in $\mathrm{SL}_2$]\label{ex:twopointsl2}
For $G=\mathrm{SL}_2$, the example with two hyperspecial parahorics has the following local data: their local groups are $G[[z]]$ and reductive quotients are $G$. Thus $\Lie(\GD)=\mathfrak{sl}_2\oplus\mathfrak{sl}_2$. Every nonzero $\mathfrak{sl}_2$-orbit has dimension two, so $S_{\reg}=\mathfrak{sl}_2\setminus\{0\}$. The products $S_{\reg}\times\{0\}$ and $\{0\}\times S_{\reg}$ give distinct components of rank $2\dim\cB+2$, while $S_{\reg}\times S_{\reg}$ has rank $2\dim\cB+4$.

Their leaves on a chart trivializing the level bundle are
\[
 T^*V\times\mathscr O\times\{0\},\qquad
 T^*V\times\{0\}\times\mathscr O,\qquad
 T^*V\times\mathscr O_1\times\mathscr O_2.
\]
\end{example}
\subsection{Periods with several marked points}\label{rem:multipointperiod}
For semisimple $G$, the local data are parabolics $P_i=M_iN_i\subset L_{\theta_i}$ and nilpotent $M_i$-orbits $\cO_i$. Since each $M_i$ contains the image of $Z(G)$, the effective groups are
\[
 P_D=\left(\prod_iP_i\right)/\Delta Z(G),\qquad
 M_D=\left(\prod_iM_i\right)/\Delta Z(G),
\]
where $\Delta Z(G)$ is the diagonal image. The quotient spaces are $\cV_D=\cU/\prod_iN_i$ and $\mathcal B_{P_D}=\cU/P_D$; then $\cV_D\to\mathcal B_{P_D}$ is a principal $M_D$-bundle. For $\mathfrak z_i=\mathfrak z(\Lie(M_i))$ and $T_{M_i}=M_i/(M_i)_{\der}$, its abelian torus is
\[
 T_D=M_D/(M_D)_{\der}
 \cong\left(\prod_iT_{M_i}\right)/C_D,
 \qquad
 C_D=\operatorname{im}\left(\Delta Z(G)\longrightarrow\prod_iT_{M_i}\right).
\]
Here $(M_D)_{\der}$ is the image of $\prod_i(M_i)_{\der}$. With character lattice $X^*(T)=\operatorname{Hom}(T,\mathbb G_m)$, the integral lattices are
\[
 X^*(T_D)
 =\left\{\chi\in\bigoplus_iX^*(T_{M_i}):\chi|_{C_D}=1\right\},
 \qquad
 X_*(T_D)=\operatorname{Hom}_{\mathbb Z}(X^*(T_D),\mathbb Z).
\]
Under $\Lie(T_D)\cong\bigoplus_i\mathfrak z_i$, the lattice $\bigoplus_iX_*(T_{M_i})$ has index $|C_D|$ in $X_*(T_D)$.

The fibers are differentiably identified with
\[
 Y_D=T^*\mathcal B_{P_D}\times_{\mathcal B_{P_D}}
       \left(\cV_D\times_{M_D}\prod_i\cO_i\right).
\]
The quotient $(\cV_D\times\prod_i\cO_i)/(M_D)_{\der}$ pulls back to a $T_D$-bundle $\mathscr P_D\to Y_D$ with integral class in $H^2(Y_D,X_*(T_D))$. Complexifying and using the pairing in \eqref{eq:periodchern} gives classes $c_i$. For $\mathbf m=(m_i)$ and $\mathbf m_0=(m_{0,i})$,
\[
 [\Omega_{\mathbf m}]-[\Omega_{\mathbf m_0}]
 =\sum_i\langle m_i-m_{0,i},c_i\rangle,
 \qquad
 c_i\in H^2(Y_D,\C)\otimes\mathfrak z_i^*.
\]
For $\mathrm{SL}_2$ with two hyperspecial points and diagonal torus Levis, $T_D=(\mathbb G_m^2)/\{(1,1),(-1,-1)\}$, and
\[
 X^*(T_D)=\{(a,b)\in\mathbb Z^2:a+b\text{ is even}\},
 \qquad
 X_*(T_D)=\mathbb Z^2+\mathbb Z(\tfrac12,\tfrac12).
\]
Thus a local character pair defines an associated line bundle precisely when its exponents have even sum.

For reductive $G$, the quotient is by the full diagonal center. With a nondegenerate invariant form $B_\g$, the central parameters satisfy
\[
 \sum_i B_{\g}(m_i,-)|_{\zalg(\g)}=0.
\]
The descended tuple $B_\g(m_i,-)$ pairs with the effective torus class to give the period variation.
\section{The Hitchin Map with a Central Residue Parameter}
\label{sec:sheet-hitchin}

\subsection{Invariant polynomials and the leading polar coefficients}

The Hitchin map records invariant polynomials of the Higgs field. Their leading polar coefficients record invariant polynomials of the reductive residue.

The polynomials $p_1,\ldots,p_r$ are fixed homogeneous generators of $\C[\g]^G$, with $r=\operatorname{rank}G$ and $d_i=\deg p_i$. Invariance and homogeneity give a morphism
\[
 \cX=T^*\cU\longrightarrow\mathcal B_\theta
 =\bigoplus_{i=1}^r H^0\bigl(X,(K_X(D))^{\otimes d_i}\bigr),
 \qquad
 (\mathcal E,\varphi,\eta)\longmapsto
 (p_1(\varphi),\ldots,p_r(\varphi));
\]
local regularity follows from Lemma~\ref{lem:residualcompatibility} below. Its scheme-theoretic image is $\mathcal A_\theta\subset\mathcal B_\theta$. Since $\cX$ is smooth, $\mathcal A_\theta$ is reduced. The resulting morphism $\widetilde H_\theta$ descends along the level quotient $\ell:\cX\to\cP$ to
\[
 h_\theta:\cP\longrightarrow\mathcal A_\theta,
 \qquad \widetilde H_\theta=h_\theta\circ\ell.
\]

The residue trivialization $K_X(D)|_x\cong\C$ sends $dz/z$ to $1$. Thus $b_i=f_i(z)(dz/z)^{d_i}$ has leading polar coefficient $f_i(0)$, independent of the coordinate. Writing $\mathfrak c_G=\g\sslash G$, with coordinates $p_1,\ldots,p_r$, gives
\[
 \operatorname{ev}_x^G:\mathcal B_\theta\longrightarrow\mathfrak c_G,
 \qquad (b_1,\ldots,b_r)\longmapsto(f_1(0),\ldots,f_r(0)).
\]
The same notation denotes its restriction to $\mathcal A_\theta$.

The invariant form identifies $\gD$ and $\gD^*$, and their adjoint quotient is $\mathfrak c_D=\gD\sslash\GD$. Under the identification $\gD\simeq\Lie Z_G(\exp(2\pi i\theta))\subset\g$, it contains $\talg=\Lie(T)$. The groups $W_D$ and $W_G$ are generated by the root reflections of $\gD$ and $\g$ on $\talg$. These are their Weyl groups, with $W_D\subset W_G$. Chevalley restriction gives a finite morphism
\[
 \nu_{D,G}:\mathfrak c_D=\talg/W_D\longrightarrow
 \mathfrak c_G=\talg/W_G.
\]
Indeed, $\C[\talg]^{W_D}$ is an intermediate finite module over $\C[\talg]^{W_G}$.

\begin{lemma}[Invariant polynomials and residues]\label{lem:residualcompatibility}
The Hitchin morphism and the reductive-residue moment map satisfy
\[
 \operatorname{ev}_x^G\circ\widetilde H_\theta
 =\nu_{D,G}\circ\chi_D\circ\mu
 \quad\text{as morphisms }\cX\longrightarrow\mathfrak c_G.
\]
\end{lemma}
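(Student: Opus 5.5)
We will verify the identity pointwise on $\cX$ and then upgrade it to an equality of morphisms. Both sides are morphisms from the smooth, hence reduced, space $\cX$ to the affine variety $\mathfrak c_G$, so it suffices to compare the pullbacks of the coordinate functions $p_1,\ldots,p_r$ at every $\C$-point. Formally we would argue over an arbitrary base $S$ with a family $(\mathcal E,\varphi,\eta)$. Because $\cX$ is reduced, however, agreement at geometric points is enough.

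Fix a point $(\mathcal E,\varphi,\eta)$ and choose a local frame of $\mathcal E$ on the formal disc at $x$ that is compatible with the level $\eta$. In this frame $\varphi=A(z)\,dz/z$ with $A(z)\in\mathfrak g_\theta\subset\g((z))$. Invariance gives $p_i(\varphi)=p_i(A(z))\,(dz/z)^{d_i}$. The first step is to check that $p_i(A(z))$ lies in $\C[[z]]$. This is the local regularity claimed above, and it identifies $f_i(0)$ as the constant term of $p_i(A(z))$.

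Next we use the root decomposition \eqref{eq:parahoricroots}. Conjugating by $z^{\theta}$, which is allowed over a finite ramified extension $\C((z^{1/e}))$, sends $\mathfrak g_\theta$ into a lattice whose coefficients carry nonnegative rational exponents. Invariance of $p_i$ under this $G(\C((z^{1/e})))$-conjugation shows that $p_i(A(z))=p_i(\Ad(z^\theta)A(z))$ is a power series in $z^{1/e}$. It is also a Laurent series in $z$, so it lies in $\C[[z]]$. The constant term is obtained by setting $z^{1/e}=0$ in $\Ad(z^\theta)A(z)$. In the expansion, the root component $z^k a_{\alpha,k}$ becomes $z^{k+\alpha(\theta)}a_{\alpha,k}$, and this survives only when $k+\alpha(\theta)=0$. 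The same applies to $\talg$ at $k=0$. The surviving terms are exactly the components in the roots with $\alpha(\theta)\in\mathbb Z$, that is, they assemble to $\rho_\theta(A)=\Res_x(\varphi)\in\gD\subset\g$. Hence $f_i(0)=p_i(\Res_x\varphi)$.

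Finally, $\chi_D\circ\mu$ sends the point to the class of $\Res_x\varphi$ in $\mathfrak c_D$, once we use $B_{\gD}$ to pass between $\gD$ and $\gD^*$. The map $\nu_{D,G}$ is induced by the inclusion $\C[\g]^G\to\C[\gD]^{\GD}$ given by restricting along $\gD\subset\g$. It is compatible with Chevalley restriction to $\talg$, since $W_D\subset W_G$. Therefore $p_i\circ\nu_{D,G}\circ\chi_D\circ\mu=p_i(\Res_x\varphi)$, which matches the left side.

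The main obstacle is the constant-term step. The identification $\gD\cong\Lie Z_G(\exp 2\pi i\theta)\subset\g$ must be the specific embedding for which $\rho_\theta$ is the "$z^{\theta}$-degree zero part". We also need to confirm that this part is independent of the compatible frame, which follows from $G_\theta$-equivariance of $\rho_\theta$. In the hyperspecial case all of this is immediate, since $\rho_\theta$ is evaluation at $z=0$ and $p_i(A(0))$ is the constant term.
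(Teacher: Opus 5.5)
Your proposal is correct and follows essentially the paper's argument. You conjugate by $z^\theta$ over a ramified cover so that $\mathfrak g_\theta$ lands in nonnegative exponents, use $G$-invariance of the $p_i$ to get regularity, read off the constant term as $p_i(\rho_\theta(A))$, and identify $\nu_{D,G}$ with restriction of $W_G$-invariants to $\talg/W_D$. The differences are cosmetic: you compare $p_i(A(z))$ directly in $\C((z^{1/e}))$, which avoids the paper's $dz/z=d\,dt/t$ bookkeeping, and you handle families via reducedness of $\cX$ rather than noting that the polynomial identities hold over arbitrary bases.
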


\begin{proof}
The Higgs field has local expression $X(z)\,dz/z$, with $X(z)\in\mathfrak g_\theta$ and coresidue $\rho_\theta(X)$. An integer $d>0$ with $d\theta$ a cocharacter gives a cover $z=t^d$ and transformed coefficient $Y(t)=\Ad(t^{d\theta})X(t^d)$. An $\alpha$-root term $z^k$ has $k\geq\lceil-\alpha(\theta)\rceil$ by \eqref{eq:parahoricroots}, and becomes a term of degree $d(k+\alpha(\theta))\geq0$. Hence $Y(t)\in\g[[t]]$. Its constant terms are the constant $\talg$-coefficient and the root terms with $\alpha(\theta)\in\mathbb Z$ and $k=-\alpha(\theta)$; these represent $\rho_\theta(X)$ in $\gD$.

For a homogeneous invariant $f$ of degree $q$, invariance gives $f(X(t^d))=f(Y(t))$. Since $f(Y(t))$ has no negative powers of $t$, $f(X(z))$ has no negative powers of $z$. Its constant coefficient is $f(Y(0))=f(\rho_\theta(X))$. Under $dz/z=d\,dt/t$, both the pullback of $f(X(z))(dz/z)^q$ and its residue generator acquire $d^q$, so the leading coefficient remains $f(\rho_\theta(X))$.

Applying this calculation to the $p_i$ proves the identity, since restriction of the same $W_G$-invariants to $\talg/W_D$ is $\nu_{D,G}$. These polynomial identities in the local coefficients hold over nonreduced bases as well.
\end{proof}

\subsection{Retaining the central residue in the target}

The Levi $M\subset\GD$ is the one fixed in the simultaneous family. The Hitchin invariants retain the conjugacy invariants of $m$; the fiber product below also retains $m$ itself. We observe that, for $e'\in\cO$, the semisimple part of $m+e'$ is $m$, so $\chi_D(m+e')=\chi_D(m)$.

\begin{definition}[The Hitchin base with a central parameter]\label{def:enhancedbase}
The \emph{Hitchin base with a central residue parameter} is defined as
\[
 \mathcal A_{\theta,M}
 =\mathcal A_\theta\times_{\mathfrak c_G}\zM,
\]
formed using $\operatorname{ev}_x^G$ and $m\mapsto\nu_{D,G}(\chi_D(m))$. Its points are pairs $(a,m)$, where $a$ is a Hitchin value and $m$ a central residue, subject to $\operatorname{ev}_x^G(a)=\nu_{D,G}(\chi_D(m))$.
\end{definition}

\begin{example}[The residue parameter for $\mathrm{SL}_2$]\label{ex:SL2enhancedbase}
For $G=\mathrm{SL}_2$ and a parahoric with diagonal-torus reductive quotient, the central parameter is $m(a)=\operatorname{diag}(a,-a)$. Then $\mathfrak c_D=\mathbb A^1_a$, whereas $p(X)=\tfrac12\operatorname{tr}(X^2)$ identifies $\mathfrak c_G=\mathbb A^1_u$ and gives $\nu_{D,G}(a)=a^2$. Hence $\mathcal A_{\theta,T}$ is defined by $\operatorname{ev}_x^G(b)=a^2$.

For $a\ne0$, the residues $a$ and $-a$ define distinct torus coadjoint orbits and residue leaves with the same ambient invariant. Here $Q_M=1$, since $M=\GD=T$; the identification $a\sim-a$ comes from the ambient $\mathrm{SL}_2$-Weyl group. The scheme fiber over $u=0$ is defined by $a^2=0$.
\end{example}

The fibers of $\chi_D|_{\zM^{\reg}}$ are $N_{\GD}(M)/M$-orbits: equal reductive invariants give conjugate semisimple elements, and their common centralizer $M$ forces the conjugating element to normalize $M$. Preserving $\cO$ restricts this group to $Q_M=N_{\GD}(M,\cO)/M$.

A value $\tau\in\mathfrak c_D$ defines the fiber $\mathcal A_\theta[\tau]=\mathcal A_\theta\times_{\mathfrak c_G}\{\tau\}$, using $\nu_{D,G}(\tau)$. If $\mathscr O\subset\gD^*$ has invariant value $\tau$, Lemma~\ref{lem:residualcompatibility} places $h_\theta(\cL_{\mathscr O})$ in this fiber. The reduced closed image of this restriction is $\mathcal A_{\theta,\mathscr O}$:
\begin{equation}\label{eq:leafwiseHitchin}
 h_{\mathscr O}:\cL_{\mathscr O}
 \longrightarrow\mathcal A_{\theta,\mathscr O}
 \hookrightarrow\mathcal A_\theta[\tau].
\end{equation}

\subsection{The Hitchin morphism on the simultaneous family}

By Lemma~\ref{lem:cotangentN}, $\mu_N^{-1}(0)/N=T^*\cV$, where $\cV=\cU/N$. A point has a faithfully flat local lift whose reductive coresidue lies in $\palg=\lalg\oplus\nalg$, with Levi component $\mu_M$. The Hitchin invariants descend to $h_P:T^*\cV\to\mathcal A_\theta$. For $v\in\mu_M^{-1}(\zM+\cO)$, the Levi residue decomposes as $\mu_M(v)=m+e'$, with $m\in\zM$ and $e'\in\cO$.

\begin{theorem}[Hitchin map on the simultaneous reduction]\label{thm:sheethitchin}
There is a canonical morphism
\[
 \widetilde h:\widetilde\cL\longrightarrow\mathcal A_{\theta,M},
 \qquad [v]\longmapsto\bigl(h_P(v),\operatorname{pr}_{\zM}\mu_M(v)\bigr).
\]
Its second component is $q$.
\end{theorem}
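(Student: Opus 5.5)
The plan is to build $\widetilde h$ first on the $M$-stable locus $\mu_M^{-1}(\zM+\cO)\subset T^*\cV$ and then descend it along the free $M$-quotient defining $\widetilde\cL$ (Definition~\ref{def:simultaneous}).

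\emph{The first component.} The first component comes from $h_P:T^*\cV\to\mathcal A_\theta$. To see that $h_P$ is a well-defined morphism, I would argue as follows. On $\mu_N^{-1}(0)\subset\cX$ the Hitchin morphism $\widetilde H_\theta$ is $\GD$-invariant, in particular $N$-invariant. By Lemma~\ref{lem:cotangentN}, $\mu_N^{-1}(0)\to T^*\cV$ is an $N$-torsor, hence faithfully flat. An $N$-invariant morphism to the separated target $\mathcal A_\theta$ therefore descends uniquely by fpqc descent of morphisms, which gives $h_P$. Since $\widetilde H_\theta$ is in fact $\GD$-invariant, $h_P$ is $M$-invariant.

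\emph{The second component.} The second component $\operatorname{pr}_{\zM}\circ\mu_M$ is $M$-invariant, because $M$ acts trivially on $\zM$ and $\operatorname{pr}_{\zM}:\zM+\cO\to\zM$ is $M$-equivariant. The pair $(h_P,\operatorname{pr}_{\zM}\mu_M)$ is thus an $M$-invariant morphism into $\mathcal A_\theta\times\zM$. It descends through the $M$-torsor $\mu_M^{-1}(\zM+\cO)\to\widetilde\cL$ (free action, Proposition~\ref{prop:smoothfamily}), again by descent of morphisms. By construction its second component is $q$.

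\emph{Landing in the fiber product.} The remaining task is to show the image lies in $\mathcal A_{\theta,M}=\mathcal A_\theta\times_{\mathfrak c_G}\zM$. Equivalently, I must check the identity $\operatorname{ev}_x^G\circ h_P=\nu_{D,G}\circ\chi_D\circ\operatorname{pr}_{\zM}\circ\mu_M$ on $\mu_M^{-1}(\zM+\cO)$, and it is enough to check it after pulling back to the flat cover $\mu_N^{-1}(0)\cap\mu^{-1}(\zM+\cO+\nalg)$. There, Lemma~\ref{lem:residualcompatibility} gives $\operatorname{ev}_x^G\widetilde H_\theta=\nu_{D,G}\chi_D(y)$, where $y=m+e'+n_y$ is the residue. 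So the heart of the matter is the scheme-theoretic identity $\chi_D(m+e'+n_y)=\chi_D(m)$ on $\zM\times\cO\times\nalg$. Its pointwise version is already noted before Definition~\ref{def:enhancedbase}.

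\emph{Main obstacle.} This scheme-theoretic identity is the step I expect to be hardest, since the bases may be nonreduced and the semisimple part is not a polynomial function. I would avoid Jordan decomposition and argue with invariant polynomials directly. Take a cocharacter $\lambda$ of the center $Z(M)^\circ$ that is positive on the roots of $\nalg$. Then $\Ad\lambda(t)$ fixes $m+e'$ and scales $n_y$ toward $0$ as $t\to 0$. An invariant polynomial $f$ therefore satisfies $f(m+e'+n_y)=f(m+e'+\Ad\lambda(t)n_y)$ for all $t$, and letting $t\to 0$ shows that $f$ is independent of $n_y$ as a polynomial identity. It remains to show that $f(m+e')=f(m)$. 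Here I would restrict $f$ to $\lalg$, where it becomes $M$-invariant, and use the Jacobson--Morozov cocharacter in $M_{\der}$, which fixes $m$ and scales $e'$ to $0$. Since $\cO$ is an orbit, the scaling stays inside $m+\cO$, and the same limit argument applies. Both identities are polynomial identities on the affine space $\zM\times\lalg\times\nalg$ (the second on the closure of $\cO$). They therefore hold over arbitrary bases, and this is what produces the factorization through the fiber product.
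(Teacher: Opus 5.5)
Your proposal is correct and follows essentially the same route as the paper. Both arguments lift to $\mu_N^{-1}(0)$, use a cocharacter of $Z(M)^\circ$ positive on $\nalg$ to get $\chi_D(m+e'+n)=\chi_D(m+e')$, apply Lemma~\ref{lem:residualcompatibility}, and descend along the faithfully flat lifts and the $M$-torsor. The only real difference is the step $\chi_D(m+e')=\chi_D(m)$: you prove it by Jacobson--Morozov scaling together with reducedness of $\zM\times\overline\cO$, whereas the paper simply notes that $m$ is the semisimple part of $m+e'$.
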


\begin{proof}
A local lift of $v$ to $\mu_N^{-1}(0)$ has reductive coresidue $m+e'+n$, with $n\in\nalg$. There is a rational cocharacter with weight zero on the simple roots of $M$ and weight one on the omitted simple roots. Clearing denominators gives a cocharacter $\lambda:\mathbb G_m\to Z(M)^\circ$ with positive weights on $\nalg$. Conjugation by $\lambda(t)$ fixes $\lalg$ and contracts $\nalg$ to zero. Constancy of invariant polynomials along this conjugation therefore gives
\[
 \chi_D(m+e'+n)=\chi_D(m+e')=\chi_D(m).
\]
Lemma~\ref{lem:residualcompatibility} therefore yields
\[
 \operatorname{ev}_x^G\bigl(h_P(v)\bigr)
 =\nu_{D,G}(\chi_D(m)).
\]
The polynomial identities descend from the faithfully flat local lifts, giving the morphism to the fiber product. Both components are $M$-invariant, so they descend along $\mu_M^{-1}(\zM+\cO)\to\widetilde\cL$.
\end{proof}

\begin{corollary}[Regular fibers]\label{prop:regularfibercorestriction}
For $m\in\zM^{\reg}$ and any $e\in\cO$, the residue orbit is $\mathscr O_m=\GD\cdot(m+e)$. There are canonical identifications
\[
 \widetilde\cL_m\cong\cL_{\mathscr O_m},
 \qquad
 (\mathcal A_{\theta,M})_m\cong\mathcal A_\theta[\chi_D(m)].
\]
The restriction of $\widetilde h$ is $h_{\mathscr O_m}$ followed by the inclusion of its reduced image in the fixed-invariant fiber.
\end{corollary}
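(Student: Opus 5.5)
The first identification is already available: for $m\in\zM^{\reg}$, Theorem~\ref{thm:simultaneous} gives the symplectomorphism $\widetilde\cL_m\xrightarrow{\sim}\cL_{\mathscr O_m}$ obtained by forgetting the parabolic reduction. I will recall how it arises, via Lemma~\ref{lem:comparisonspaces} and Theorem~\ref{thm:parabolicfibers}: a point of $\widetilde\cL_m$ is a class in $\mu^{-1}(m+\cO+\nalg)/P_M$. Lemma~\ref{lem:unipotentorbit} shows that $m+\cO+\nalg$ is a single $P_M$-orbit inside $\GD\cdot(m+e)$, and the compatible reduction is unique. This is what makes the forgetful map an isomorphism. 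I will use this description to compare the Hitchin components.

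For the base, I take the fiber of $\mathcal A_{\theta,M}=\mathcal A_\theta\times_{\mathfrak c_G}\zM\to\zM$ over the point $m$. By the associativity of fiber products it equals $\mathcal A_\theta\times_{\mathfrak c_G}\{m\}$, with $\{m\}\to\mathfrak c_G$ given by $\nu_{D,G}\circ\chi_D$. By definition this is $\mathcal A_\theta[\chi_D(m)]$, as a scheme. The identification is therefore formal and canonical. The one thing to check is that the fiber is taken scheme-theoretically on both sides, which holds because $\{m\}\hookrightarrow\zM$ is a closed point.

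Next I compare the maps. By Theorem~\ref{thm:sheethitchin}, $\widetilde h=(h_P,q)$, and on $\widetilde\cL_m$ the second component is constantly $m$. So it suffices to show that $h_P$ restricted to $\widetilde\cL_m$ equals $h_\theta$ composed with the isomorphism to $\cL_{\mathscr O_m}\subset\cP$. Locally, lift a point to $\mu_N^{-1}(0)\subset\cX$. By Lemma~\ref{lem:cotangentN}, $h_P$ is computed from the Higgs field of this lift, namely $(p_i(\varphi))$. The forgetful map sends the same lift to its class $\ell(\mathcal E,\varphi,\eta)$ in $\cP$, and there $h_\theta\circ\ell=\widetilde H_\theta$ evaluates the same invariants $p_i(\varphi)$. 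Forgetting the level and the reduction does not change $\varphi$, so the two compositions agree on faithfully flat local lifts and therefore agree.

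Finally, $h_\theta|_{\cL_{\mathscr O_m}}=h_{\mathscr O_m}$ lands in the reduced image $\mathcal A_{\theta,\mathscr O_m}$, which lies in $\mathcal A_\theta[\chi_D(m)]$ by \eqref{eq:leafwiseHitchin}. The source is reduced because it is smooth, so the map factors through the reduced closed image. The main point to watch is scheme-theoretic, not set-theoretic: the factorization through $\mathcal A_\theta[\chi_D(m)]$ holds because Lemma~\ref{lem:residualcompatibility} is an identity of morphisms, valid over nonreduced bases. Passing to the reduced image then needs only reducedness of $\widetilde\cL_m$, which Proposition~\ref{prop:smoothfamily} supplies.
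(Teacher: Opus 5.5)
Your proposal is correct and follows the paper's own proof: the source identification is Theorem~\ref{thm:simultaneous}, the target identification is base change of the defining fiber product $\mathcal A_{\theta,M}=\mathcal A_\theta\times_{\mathfrak c_G}\zM$ to the point $m$, and the two Hitchin morphisms agree by descent because they apply the same $p_i$ to the same Higgs field on faithfully flat local lifts to $\cX$. Your added remarks on reducedness and on the scheme-theoretic factorization through $\mathcal A_\theta[\chi_D(m)]$ only make explicit what the paper leaves implicit.
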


\begin{proof}
The source identification is Theorem~\ref{thm:simultaneous}; the target identification follows by base change of the defining fiber product. Both morphisms apply the $p_i$ to the same Higgs field on local lifts to $\cX$, so they agree by descent.
\end{proof}

On $\widetilde\cL_0^\circ$, the zero-fiber comparison $f_0$ of Theorem~\ref{thm:simultaneous} preserves the Hitchin invariants:
\[
 \operatorname{pr}_{\mathcal A_\theta}\circ
 \widetilde h|_{\widetilde\cL_0^\circ}
 =h_\theta|_{\cL_{\mathscr O_0}}\circ f_0.
\]
\section{Local Residues and Parabolic Reductions on the Formal Disc}\label{sec:affine-sheets}

\subsection{Lattices and parabolic reductions over \texorpdfstring{$\C$}{C}}

A fixed coordinate $z$ at $x$ gives the formal Laurent series field $K_x=\C((z))$, with a chosen algebraic closure $\overline K_x$. In a fixed punctured-disc trivialization, the Higgs field has expression
\[
 \phi=\gamma\,\frac{dz}{z},\qquad \gamma\in\g(K_x).
\]
A coset $q_\theta=gG_\theta$ defines an extension with adjoint lattice $\Ad(g)\mathfrak g_\theta$. The Higgs field extends precisely when
\[
 \Ad(g^{-1})\gamma\in\mathfrak g_\theta,
\]
with reductive residue $\rho_\theta(\Ad(g^{-1})\gamma)$. This condition is independent of the representative $g$.

For $P_M=MN\subset\GD$ as in Section~\ref{sec:leviresidues}, its inverse image under $L_\theta\to\GD$ is $P_M^{L_\theta}$. The subgroup $\widehat P_M$ is the inverse image of $P_M^{L_\theta}$ under $G_\theta\to L_\theta$. Its Lie lattice is
\[
 \widehat\palg=\rho_\theta^{-1}(\palg).
\]
The coset $g\widehat P_M$ records a parahoric lattice and a $P_M$-reduction; for fixed $q_\theta$ the reductions form $\GD/P_M$. Preservation of the parabolic reduction and the central Levi residue condition are
\[
 \Ad(g^{-1})\gamma\in\widehat\palg,\qquad
 \operatorname{pr}_{\lalg}\rho_\theta(\Ad(g^{-1})\gamma)\in m+\cO,
 \qquad m\in\zM.
\]
The nilradical component and all coefficients in $\mathfrak g_\theta^+$ are unrestricted. An affine Springer fiber parametrizes parahoric lattices preserved by $\gamma$ \cite{KL,GKM}. The displayed conditions also prescribe the Levi residue \cite{Ngo}.

\subsection{The same conditions in families}

For a $\C$-algebra $R$, the loop functors are
\[
 LG(R)=G(R((z))),\qquad
 L\g(R)=\g(R((z))),\qquad
 L^+\mathcal G_\theta(R)=\mathcal G_\theta(R[[z]]),
\]
where $R((z))=R[[z]][z^{-1}]$. Reductive reduction
$\operatorname{red}_\theta:L^+\mathcal G_\theta\to L_\theta$ defines
\[
 \widehat P_M(R)
 =\operatorname{red}_\theta^{-1}\bigl(P_M^{L_\theta}(R)\bigr).
\]
Its complex points give the preceding subgroup. Since $P_M^{L_\theta}$ is parabolic, the facet--parabolic correspondence identifies $\widehat P_M$ with the positive-loop group of the corresponding parahoric \cite[\S8.b]{PR}.

An fpqc cover is a faithfully flat, quasi-compact change of base. The fpqc quotient glues cosets represented on such covers; a representative need not exist on the original base. The partial affine flag sheaf
\[
 LG/_{\mathrm{fpqc}}\widehat P_M
\]
is an ind-scheme, a filtered union of schemes along closed immersions \cite[Theorem~1.4]{PR}. For a positive-loop group $\mathcal H$ acting on a coefficient scheme $Z$, the associated fpqc sheaf is denoted by $LG\times^{\mathcal H}_{\mathrm{fpqc}}Z$.

The $k$-th jet retains the coefficient lattice modulo $z^k\mathfrak g_\theta$. These finite truncations give
\[
 \mathfrak g_\theta=\varprojlim_k\mathfrak g_{\theta,k},
 \qquad
 \mathfrak g_{\theta,k}=\mathfrak g_\theta/z^k\mathfrak g_\theta,
 \qquad
 \mathfrak g^+_{\theta,k}=\mathfrak g_\theta^+/z^k\mathfrak g_\theta,
 \qquad k\geq1.
\]
These jets are finite-dimensional affine spaces; the last quotient exists since $z\mathfrak g_\theta\subset\mathfrak g_\theta^+$. The residue factors through every jet. The root decomposition \eqref{eq:parahoricroots} gives a linear section of $\rho_\theta$, compatible with truncation. In particular, a sheet $S\subset\gD$ has inverse image $\rho_\theta^{-1}(S)\cong S\times\mathfrak g_\theta^+$.

\subsection{Local residues and their finite jets}

\begin{definition}[Local residues with a parabolic reduction]\label{def:localresiduefamily}
The coefficient scheme with prescribed Levi residue is
\[
 \widehat{\mathcal R}
 =\{(m,y)\in\zM\times\widehat\palg:
 \operatorname{pr}_{\lalg}\rho_\theta(y)\in m+\cO\}.
\]
Thus $\rho_\theta(y)\in m+\cO+\nalg$. The associated quotient sheaf is
\[
 \widetilde\cJ^{\mathrm{aff}}
 =LG\times^{\widehat P_M}_{\mathrm{fpqc}}
   \widehat{\mathcal R},
 \qquad
 (g;(m,y))\cdot p=(gp;(m,\Ad(p^{-1})y)).
\]
A point $[g;(m,y)]$ consists of the lattice with its parabolic reduction, the central parameter, and a coefficient $y$ in that lattice. Transporting $y$ to the fixed punctured-disc frame gives
\begin{equation}\label{eq:forgetlocalreduction}
 \nu^{\mathrm{aff}}:
 \widetilde\cJ^{\mathrm{aff}}
 \longrightarrow\zM\times L\g,
 \qquad
 [g;(m,y)]\longmapsto(m,\Ad(g)y).
\end{equation}
\end{definition}

The quotient $\widehat\palg_k=\widehat\palg/z^k\mathfrak g_\theta$ has induced residue map $\rho_{\theta,k}:\widehat\palg_k\to\palg$. The finite-jet scheme
\begin{equation}\label{eq:residuejets}
 \widehat{\mathcal R}_k
 =\{(m,y_k)\in\zM\times\widehat\palg_k:
   \operatorname{pr}_{\lalg}\rho_{\theta,k}(y_k)\in m+\cO\}
\end{equation}
is locally closed and of finite type, being the inverse image of the locally closed orbit $\cO$. Its transition morphisms are affine, and
\[
 \widehat{\mathcal R}
 =\varprojlim_k\widehat{\mathcal R}_k
\]
exists as a scheme, whose $R$-points are compatible allowed jets.

For the hyperspecial lattice $\mathfrak g_\theta=\g[[z]]$, the residue is the constant term and the quotient at $k=1$ is
\[
 \{(m,a)\in\zM\times\palg:
     \operatorname{pr}_{\lalg}(a)\in m+\cO\}
 =\{(m,a):a\in m+\cO+\nalg\}.
\]
Its $\GD\times^{P_M}$-space is $\widetilde\cJ$ from Section~\ref{sec:leviresidues}. For general parahorics, this condition holds in the reductive quotient, with all higher coefficients allowed.

\begin{lemma}[Complex points of the partial affine flag]
\label{lem:complexpartialflag}
Every fpqc $\widehat P_M$-torsor over $\Spec\C$ has a section. Consequently
\[
 \bigl(LG/_{\mathrm{fpqc}}\widehat P_M\bigr)(\C)
 =G(\C((z)))/\widehat P_M(\C).
\]
\end{lemma}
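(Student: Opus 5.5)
The plan is to exploit the structure of $\widehat P_M$ as an extension of a smooth connected affine algebraic group by a pro-unipotent group, and to trivialize torsors in two stages. First, the reduction map $\operatorname{red}_\theta:L^+\mathcal G_\theta\to L_\theta$ restricts to a surjection $\widehat P_M\to P_M^{L_\theta}$ with kernel the pro-unipotent radical $L^+\mathcal G_\theta^{+}$. Since $\widehat P_M$ is identified with the positive-loop group of a parahoric \cite[\S8.b]{PR}, I can write it as $\varprojlim_k \widehat P_{M,k}$. Here each jet group $\widehat P_{M,k}$ is a smooth connected affine group of finite type over $\C$. The transition maps $\widehat P_{M,k+1}\to\widehat P_{M,k}$ are surjective with vector-group kernels, namely the successive quotients of the congruence filtration.

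Given an fpqc $\widehat P_M$-torsor $\mathcal T$ over $\Spec\C$, I push it forward to each $\widehat P_{M,k}$. This gives a torsor under a smooth affine group of finite type, which is then representable by a smooth scheme of finite type over $\C$ via fpqc descent for affine morphisms. Such a scheme is nonempty, because the torsor is fpqc locally trivial and a torsor is nonempty if it becomes nonempty after faithfully flat base change. So it has a $\C$-point by the Nullstellensatz. I then construct a compatible system of sections inductively. Suppose $s_k\in\mathcal T_k(\C)$ is given. The fiber of $\mathcal T_{k+1}\to\mathcal T_k$ over $s_k$ is a torsor under the kernel vector group $V_k$. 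A $V_k$-torsor over $\Spec\C$ is trivial because $H^1_{\mathrm{fpqc}}(\Spec\C,\mathbb G_a^n)=H^1(\Spec\C,\mathcal O)^n=0$. Alternatively, it is again a nonempty finite-type scheme and so has a rational point. Either way, $s_k$ lifts to some $s_{k+1}$.

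The compatible family $(s_k)$ defines a point of $\varprojlim_k\mathcal T_k$, and I expect this to be the main obstacle. I need $\mathcal T\cong\varprojlim_k\mathcal T_k$, i.e.\ that the $\widehat P_M$-torsor is recovered from its jet pushforwards. The plan is to check this after the fpqc cover trivializing $\mathcal T$: there $\mathcal T$ becomes $\widehat P_M=\varprojlim_k\widehat P_{M,k}$, and limits commute with base change. Descent of the morphism $\mathcal T\to\varprojlim\mathcal T_k$, which is an isomorphism locally, then gives the claim. The point requiring care is that $\mathcal T$ is affine, being fpqc-locally the affine scheme $\widehat P_M$; effectivity of fpqc descent for affine schemes handles this.

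For the consequence, a $\C$-point of the fpqc quotient $LG/_{\mathrm{fpqc}}\widehat P_M$ is represented on some fpqc cover $S\to\Spec\C$ by an element of $LG(S)$. Its fiber in $LG$ is then an fpqc $\widehat P_M$-torsor over $\Spec\C$. By the first part this torsor has a section, which is a point $g\in G(\C((z)))$ mapping to the given coset. Two such points differ by $\widehat P_M(\C)$, which gives the bijection $G(\C((z)))/\widehat P_M(\C)$.
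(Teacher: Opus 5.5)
Your proposal is correct and follows the paper's proof essentially step by step. You use the jet truncations $\widehat P_{M,k}$ with vector-group kernels, take a $\C$-point of the first truncation and lift it inductively through the vector-group torsor fibers, prove $\mathcal T\cong\varprojlim_k\mathcal T_k$ after an fpqc trivialization and descend, and then read a quotient point as a $\widehat P_M$-torsor with an equivariant map to $LG$; your alternative lifting argument via $H^1_{\mathrm{fpqc}}(\Spec\C,\mathbb G_a^n)=0$ is an equally valid substitute for the paper's rational-point argument.
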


\begin{proof}
The group $\widehat P_{M,k}$ is the inverse image of $P_M^{L_\theta}$ under reductive reduction of the jet group $R\mapsto\mathcal G_\theta(R[z]/(z^k))$. These truncation quotients satisfy
\[
 \widehat P_M=\varprojlim_k\widehat P_{M,k}.
\]
The first quotient extends the connected parabolic $P_M^{L_\theta}$ by a connected unipotent group. Smoothness of the parahoric gives surjective transitions with kernels isomorphic to additive vector spaces, represented by the infinitesimal coefficients modulo $z^{k+1}$.

An fpqc torsor under the affine group $\widehat P_M$ is affine by descent. A torsor $\mathscr T$ has truncations $\mathscr T_k=\mathscr T\times^{\widehat P_M}\widehat P_{M,k}$. The nonempty finite-type scheme $\mathscr T_1$ has a complex point. Once $t_k\in\mathscr T_k(\C)$ is chosen, the fiber above it is a nonempty finite-type torsor under an additive vector group. It has a complex point $t_{k+1}$, so the points can be chosen compatibly. Moreover, $\mathscr T\cong\varprojlim_k\mathscr T_k$: this identity holds after fpqc trivialization and descends. Thus $(t_k)$ defines a section.

A quotient point is a torsor with an equivariant map to $LG$. A section gives $g\in G(\C((z)))$, unique up to right multiplication by $\widehat P_M(\C)$.
\end{proof}

\begin{proposition}[Finite-jet structure and fpqc fibers]
\label{prop:affinestructure}
The coefficient sheaf $\widehat{\mathcal R}$ is represented by the inverse-limit scheme of \eqref{eq:residuejets} and is $\widehat P_M$-stable. Each projection
\[
 \widehat{\mathcal R}_k\longrightarrow\zM
\]
is smooth, and the transition morphisms are smooth and affine. A linear section of $\rho_\theta$ identifies
\[
 \widehat{\mathcal R}
 \cong\zM\times\cO\times\nalg\times\mathfrak g_\theta^+.
\]
For $(m,\gamma)\in\zM(\C)\times L\g(\C)$, the fpqc fiber
\begin{equation}\label{eq:sheetASfunctor}
 \cF_{\gamma,m}
 =\widetilde\cJ^{\mathrm{aff}}
   \times_{\zM\times L\g,(m,\gamma)}\Spec\C
\end{equation}
is the subfunctor of $LG/_{\mathrm{fpqc}}\widehat P_M$ represented locally by $g\in LG(R)$ satisfying
\[
 \Ad(g^{-1})\gamma\in\widehat\palg(R),\qquad
 \operatorname{pr}_{\lalg}\rho_\theta(\Ad(g^{-1})\gamma)
 \in m+\cO(R).
\]
In particular,
\begin{equation}\label{eq:sheetAS}
 \cF_{\gamma,m}(\C)
 =\left\{
 g\widehat P_M\in G(K_x)/\widehat P_M:
 \begin{array}{l}
  \Ad(g^{-1})\gamma\in\widehat\palg,\\[-2pt]
  \operatorname{pr}_{\lalg}\rho_\theta(\Ad(g^{-1})\gamma)
     \in m+\cO
 \end{array}
 \right\}.
\end{equation}
If $\gamma$ is generically regular semisimple, so its centralizer in $G_{\overline K_x}$ is a maximal torus, this is an affine Springer fiber with the prescribed Levi residue.
\end{proposition}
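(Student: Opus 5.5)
We treat the three assertions in order: representability and structure of $\widehat{\mathcal R}$, smoothness of the jet projections and transitions, and the description of the fpqc fiber.

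\textbf{Structure of $\widehat{\mathcal R}$.} We begin with the coefficient scheme. The root decomposition \eqref{eq:parahoricroots} gives a $\C[[z]]$-linear splitting $\mathfrak g_\theta\cong\gD\oplus\mathfrak g_\theta^+$ of $\rho_\theta$, compatible with every truncation. Since $\widehat\palg=\rho_\theta^{-1}(\palg)$, this restricts to $\widehat\palg\cong\palg\oplus\mathfrak g_\theta^+$, and on jets to $\widehat\palg_k\cong\palg\oplus\mathfrak g^+_{\theta,k}$. Combining this with $\palg=\zM\oplus[\lalg,\lalg]\oplus\nalg$, the defining condition $\operatorname{pr}_{\lalg}\rho_{\theta,k}(y_k)\in m+\cO$ says exactly the following. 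The $\zM$-component of the residue equals $m$, the $[\lalg,\lalg]$-component lies in $\cO$, and the $\nalg$ and $\mathfrak g^+_{\theta,k}$ components are free. Hence
\[
 \widehat{\mathcal R}_k\cong\zM\times\cO\times\nalg\times\mathfrak g^+_{\theta,k}.
\]
The projection to $\zM$ is then the first factor. Since $\cO$ is a smooth homogeneous variety and the remaining factors are affine spaces, this projection is smooth. The transition maps are projections $\mathfrak g^+_{\theta,k+1}\to\mathfrak g^+_{\theta,k}$ with affine-space kernels $z^k\mathfrak g_\theta/z^{k+1}\mathfrak g_\theta$, so they are smooth and affine. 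Passing to the limit gives the product description of $\widehat{\mathcal R}$, with $\mathfrak g_\theta^+=\varprojlim\mathfrak g^+_{\theta,k}$. Its $R$-points are compatible jets, and for these the residue condition is checked at the first jet. This shows that the inverse limit represents the coefficient functor.

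\textbf{$\widehat P_M$-stability.} The map $\rho_\theta$ is equivariant for $\operatorname{red}_\theta$, so $\Ad(\widehat P_M)$ preserves $\widehat\palg$, acting on residues through $P_M$. It remains to see that $P_M$ preserves the condition $\operatorname{pr}_{\lalg}(y)\in m+\cO$, for $R$-valued points as well. The unipotent radical $N$ acts trivially on $\palg/\nalg\cong\lalg$. The Levi $M$ fixes $m\in\zM$ and preserves $\cO$. This is the same argument as for $\widetilde\cJ$ in Section~\ref{sec:leviresidues}.

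\textbf{The fpqc fiber.} Finally we identify the fiber. A point of $\widetilde\cJ^{\mathrm{aff}}$ over $R$ is given fpqc-locally by a representative $(g;(m',y))$. The condition $\nu^{\mathrm{aff}}=(m,\gamma)$ forces $m'=m$ and $y=\Ad(g^{-1})\gamma$. The coefficient is therefore determined by $g$, and the fiber is the subfunctor of $LG/_{\mathrm{fpqc}}\widehat P_M$ cut out by requiring $\Ad(g^{-1})\gamma\in\widehat{\mathcal R}_m(R)$. By the stability just proved, this condition is independent of the representative, so the subfunctor descends. Its complex points are computed with Lemma~\ref{lem:complexpartialflag}: every class has a representative $g\in G(K_x)$, which yields \eqref{eq:sheetAS}. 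When $\gamma$ is generically regular semisimple, the first condition is the usual affine Springer condition for the parahoric $\widehat P_M$, in the sense of \cite{KL,GKM}, and the second condition prescribes the Levi residue.

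\textbf{Main obstacle.} The delicate step is the representability and limit assertion for non-noetherian $R$. We must check that the condition ``the $[\lalg,\lalg]$-component lies in the locally closed $\cO$'' is defined functorially. We plan to handle this by interpreting $\cO(R)$ as morphisms into the reduced locally closed subscheme. Then the product decomposition makes every step a base change of the single smooth scheme $\cO$, so no further issues arise.
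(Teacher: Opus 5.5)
Your proposal is correct and follows essentially the same route as the paper: the root-decomposition splitting gives $\widehat{\mathcal R}_k\cong\zM\times\cO\times\nalg\times\mathfrak g^+_{\theta,k}$ with transitions given by projections, stability follows from $P_M$-stability of $m+\cO+\nalg$ via $\widehat P_M\to P_M$, and the fiber is read off from $y=\Ad(g^{-1})\gamma$ together with Lemma~\ref{lem:complexpartialflag}. One small correction: the section of $\rho_\theta$ is only $\C$-linear and compatible with truncation, not $\C[[z]]$-linear (a finite-dimensional subspace of the lattice cannot be a $\C[[z]]$-submodule), but your argument never uses more than this.
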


\begin{proof}
The homomorphism $\widehat P_M\to P_M^{L_\theta}\to P_M$ intertwines conjugation. Stability of $m+\cO+\nalg$ under $P_M$ therefore gives stability of the coefficient scheme and makes \eqref{eq:forgetlocalreduction} well defined.

The root decomposition gives a linear splitting $s:\palg\to\widehat\palg$ of
\[
 0\longrightarrow\mathfrak g_\theta^+
 \longrightarrow\widehat\palg
 \longrightarrow\palg\longrightarrow0.
\]
Since $\palg=\lalg\oplus\nalg$, each allowed coefficient is uniquely
\[
 y=s(m+e+u)+v,\qquad
 e\in\cO,\quad u\in\nalg,\quad v\in\mathfrak g_\theta^+.
\]
Consequently, truncation gives compatible isomorphisms
\[
 \widehat{\mathcal R}_k
 \cong\zM\times\cO\times\nalg\times\mathfrak g^+_{\theta,k}.
\]
The orbit and vector-space factors are smooth. Transitions are identity maps on the first three factors and linear projections on the fourth, with kernel $z^k\mathfrak g_\theta/z^{k+1}\mathfrak g_\theta$. Taking inverse limits gives the asserted splitting of $\widehat{\mathcal R}$.

In the fiber over $(m,\gamma)$, the coefficient is $y=\Ad(g^{-1})\gamma$. Membership in $\widehat{\mathcal R}$ gives the two residue conditions, which descend by $\widehat P_M$-stability. Lemma~\ref{lem:complexpartialflag} gives the complex-point description.
\end{proof}

\subsection{Forgetting the parabolic reduction}

The parahoric affine Springer subfunctor is
\[
 \cF^{G_\theta}_\gamma(R)
 =\{q\in(LG/_{\mathrm{fpqc}}L^+\mathcal G_\theta)(R):
   \text{fpqc-locally }q=g(L^+\mathcal G_\theta),\
   \Ad(g^{-1})\gamma\in\mathfrak g_\theta(R)\}.
\]
The forgetful map is induced by
\[
 LG/_{\mathrm{fpqc}}\widehat P_M
 \longrightarrow LG/_{\mathrm{fpqc}}L^+\mathcal G_\theta.
\]
For $m\in\zM^{\reg}$ and $e\in\cO$, Theorem~\ref{thm:parabolicfibers} gives
\[
 \GD\times^{P_M}(m+\cO+\nalg)\simeq\GD\cdot(m+e).
\]
For a representative $g$, compatible parabolic reductions form the fiber of $\nu_m$ over $\rho_\theta(\Ad(g^{-1})\gamma)$. The isomorphism therefore gives a unique reduction functorially, and descends to identify $\cF_{\gamma,m}$ with the subfunctor of $\cF^{G_\theta}_\gamma$ whose residue lies in $\GD\cdot(m+e)$.

At $m=0$, over lattices with residue in $\mathscr O_0=\Ind_{\lalg}^{\gD}(\cO)$, forgetting the parabolic reduction is the base change of the finite \'{e}tale cover of Lemma~\ref{lem:inducedorbitcover}, with generalized Springer degree.

\subsection{The formal-disc condition in a Hitchin fiber}

\begin{proposition}[The fiber with a fixed central parameter]
\label{prop:localsheetfiber}
For $(a,m)\in\mathcal A_{\theta,M}$, the Hitchin fiber with central parameter $m$ is
\begin{equation}\label{eq:sheetHitchinfiber}
 \left\{v\in T^*\cV:
 h_P(v)=a,\
 \mu_M(v)\in m+\cO\right\}/M.
\end{equation}
A punctured-disc Higgs field with a fixed trivialization has coefficient $\gamma$. Its compatible parahoric extensions with $P_M$-reduction are parametrized by $\cF_{\gamma,m}(\C)$. For a fixed underlying lattice $q_\theta$, the reductions are the complex points of
\[
 \cF_{\gamma,m}
 \times_{LG/_{\mathrm{fpqc}}L^+\mathcal G_\theta}\{q_\theta\}.
\]
\end{proposition}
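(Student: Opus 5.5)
The proof has two parts: the global Hitchin fiber \eqref{eq:sheetHitchinfiber}, and the formal-disc parametrization by $\cF_{\gamma,m}$.

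For the fiber, I would start from Theorem~\ref{thm:sheethitchin}. It gives $\widetilde h=(h_P,q)$ on $\widetilde\cL=\mu_M^{-1}(\zM+\cO)/M$, with $q$ induced by $\operatorname{pr}_{\zM}\circ\mu_M$. The fiber over $(a,m)\in\mathcal A_{\theta,M}$ therefore consists of the classes $[v]$ with $h_P(v)=a$ and $\operatorname{pr}_{\zM}\mu_M(v)=m$. Since $\zM+\cO\cong\zM\times\cO$ has an $M$-invariant projection, the second condition inside $\mu_M^{-1}(\zM+\cO)$ is equivalent to $\mu_M(v)\in m+\cO$. Both conditions are $M$-invariant, because $h_P$ is built from invariant polynomials and $\mu_M$ is equivariant while $M$ fixes $\zM$. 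So the fiber product $\widetilde h^{-1}(a,m)$ is the quotient by $M$ of the displayed invariant subspace of $T^*\cV$. Scheme-theoretically this uses that $\mu_M^{-1}(\zM+\cO)\to\widetilde\cL$ is an $M$-torsor, so fibers of $M$-invariant maps descend; here I would invoke the chart description of Proposition~\ref{prop:smoothfamily}.

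For the local statement, I would work at $x$ with a fixed punctured-disc trivialization, so that $\varphi=\gamma\,dz/z$. A parahoric extension is a coset $gG_\theta$, and $\varphi$ extends exactly when $\Ad(g^{-1})\gamma\in\mathfrak g_\theta$. A $P_M$-reduction of the residue torsor refines $gG_\theta$ to $g\widehat P_M$; by Remark~\ref{rem:parahoricembedding} this is the smaller parahoric with lattice $\widehat\palg=\rho_\theta^{-1}(\palg)$. The condition that the reduction is compatible with the residue (the vanishing of $\mu_N$, as in Lemma~\ref{lem:cotangentN}) becomes $\Ad(g^{-1})\gamma\in\widehat\palg$. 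The condition $\mu_M\in m+\cO$ becomes $\operatorname{pr}_{\lalg}\rho_\theta(\Ad(g^{-1})\gamma)\in m+\cO$. These are exactly the defining conditions of $\cF_{\gamma,m}(\C)$ in \eqref{eq:sheetAS}. Lemma~\ref{lem:complexpartialflag} guarantees that complex points of the fpqc quotient are genuine cosets. The conditions are independent of the representative by the $\widehat P_M$-stability in Proposition~\ref{prop:affinestructure}.

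Finally, fixing the underlying lattice $q_\theta=gG_\theta$ means taking the fiber of $LG/_{\mathrm{fpqc}}\widehat P_M\to LG/_{\mathrm{fpqc}}L^+\mathcal G_\theta$ over $q_\theta$. That fiber is $G_\theta/\widehat P_M\cong\GD/P_M$, the space of reductions. Intersecting it with $\cF_{\gamma,m}$ and taking complex points gives the compatible reductions, by Lemma~\ref{lem:complexpartialflag} applied to the fiber product.

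The main obstacle is the dictionary between the global moment-map conditions on $T^*\cV$ and the local lattice conditions. One has to check that the global $P_M$-reduction of the level torsor coincides with a $\widehat P_M$-refinement of the local parahoric coset, and that the reductive coresidue of Theorem~\ref{thm:coresidue} equals $\rho_\theta(\Ad(g^{-1})\gamma)$ in the induced level frame. I would handle this by Beauville--Laszlo-style gluing: the torsor restricted to the disc is determined by $g$, the level is a frame of $L_\theta$ modulo $Z(G)$, and $\Res_x$ is computed in that frame, so both conditions translate verbatim.
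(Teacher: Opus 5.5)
Your proposal is correct and follows the paper's proof. The global fiber comes from base change of the $M$-torsor $\mu_M^{-1}(\zM+\cO)\to\widetilde\cL$ over $(a,m)$, the local identification with $\cF_{\gamma,m}(\C)$ is the content of Proposition~\ref{prop:affinestructure} together with Lemma~\ref{lem:complexpartialflag}, and fixing $q_\theta$ imposes $gL^+\mathcal G_\theta=q_\theta$, which gives the fiber product; the one difference is that the paper settles your ``dictionary'' concern simply by noting that both descriptions are invariant under simultaneous changes of punctured-disc trivialization, rather than by an explicit Beauville--Laszlo gluing.
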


\begin{proof}
Base change of the $M$-torsor $\mu_M^{-1}(\zM+\cO)\to\widetilde\cL$ over $(a,m)$ imposes the two equations in \eqref{eq:sheetHitchinfiber}; quotienting gives the global fiber. Locally Proposition~\ref{prop:affinestructure} identifies the allowed extensions with $\cF_{\gamma,m}$. Fixing $q_\theta$ further imposes $gL^+\mathcal G_\theta=q_\theta$, giving the fiber product. Both descriptions are invariant under simultaneous changes of punctured-disc trivialization.
\end{proof}

\section{Quotients of Cotangent Bundles}\label{sec:cotangentquotients}

\begin{proposition}[Quotients of cotangent bundles]\label{prop:cotangentquotientcharts}
A connected complex algebraic group $H$, with $\mathfrak h=\Lie(H)$, acts on a right $H$-torsor $r:W\to Z$ in the \'{e}tale topology between smooth separated algebraic spaces of finite type. Its cotangent moment map is $J:T^*W\to\mathfrak h^*$. For every $H$-stable locally closed subspace $Y\subset\mathfrak h^*$, the geometric quotient $J^{-1}(Y)/H$ exists as a separated algebraic space of finite type. The quotient map is an $H$-torsor in the \'{e}tale topology. On every \'{e}tale trivialization $V\to Z$ of $W$ there is an isomorphism
\begin{equation}\label{eq:cotangentquotientchart}
 \bigl(J^{-1}(Y)/H\bigr)|_V\cong T^*V\times Y.
\end{equation}
In particular the quotient is smooth if $Y$ is smooth. If $f:Y\to T$ is an $H$-invariant smooth morphism and $H$ acts trivially on $T$, then $J^{-1}(Y)/H\to T$ is smooth as well.
\end{proposition}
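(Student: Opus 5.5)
The plan is to build the quotient locally from the trivialization and then glue. First I reduce to the case of a trivial torsor. Let $V\to Z$ be an \'etale map over which $W$ admits a section $s$. Then $W_V=W\times_ZV\cong V\times H$ via $(v,h)\mapsto s(v)h$. Under this isomorphism, $T^*W_V\cong T^*V\times T^*H$. Right-trivializing $T^*H\cong H\times\mathfrak h^*$ by $(h,\zeta)$ with covector $\langle\zeta,dh\,h^{-1}\rangle$, the cotangent moment map for the right action becomes $J(\alpha,h,\zeta)=\Ad^*_{h^{-1}}\zeta$ up to the sign convention. The action is $(\alpha,h,\zeta)\cdot g=(\alpha,hg,\zeta)$.

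The map
\[
 T^*V\times H\times\mathfrak h^*\longrightarrow T^*V\times H\times\mathfrak h^*,\qquad (\alpha,h,\zeta)\longmapsto(\alpha,h,\Ad^*_{h^{-1}}\zeta)
\]
is an $H$-equivariant isomorphism, provided the target carries the action $g\cdot$ on $h$ and the coadjoint action on the last factor. Under it, $J^{-1}(Y)|_V$ becomes $T^*V\times\{(h,\xi):\xi\in Y\}$. Since $Y$ is $H$-stable, the map $(h,\xi)\mapsto(h,\Ad^*_h\xi)$ identifies this set $H$-equivariantly with $T^*V\times H\times Y$, where $H$ acts only by right multiplication on the middle factor. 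This is a trivial $H$-torsor over $T^*V\times Y$, which gives the chart \eqref{eq:cotangentquotientchart} together with the torsor property over $V$. The same computation on the full $\mathfrak h^*$ verifies the Poisson description used in Example~\ref{ex:localcotangentmodel}.

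Next I glue. On overlaps $V_{12}=V_1\times_ZV_2$, the two sections differ by a map $g_{12}:V_{12}\to H$. The induced transition is $(\alpha,\xi)\mapsto(\alpha+g_{12}^*\langle\xi,dg\,g^{-1}\rangle\text{-type correction},\Ad^*_{g_{12}}\xi)$. This is an isomorphism of the charts, and it satisfies the cocycle condition because it is induced from the canonical $H$-invariant structure on $J^{-1}(Y)$. Consequently the quotient sheaf $J^{-1}(Y)/H$ is \'etale locally representable by $T^*V\times Y$. Since $H$ acts freely and $J^{-1}(Y)\to J^{-1}(Y)/H$ is \'etale-locally a product, the quotient exists as an algebraic space. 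Concretely, it is the fppf quotient of a free action that is locally trivial in the \'etale topology, presented by the \'etale atlas $\bigsqcup_i T^*V_i\times Y$ with the groupoid above. It is a geometric quotient because the charts are. Finite type follows from quasi-compactness of $Z$ by choosing finitely many $V_i$.

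I expect separatedness to be the main obstacle. I would argue it from the fact that $J^{-1}(Y)\to J^{-1}(Y)/H$ is an $H$-torsor, so the quotient is separated if and only if the map $J^{-1}(Y)\times H\to J^{-1}(Y)\times J^{-1}(Y)$, $(p,g)\mapsto(p,pg)$, is a closed immersion. This map is the restriction of the corresponding map for $T^*W$. For $T^*W$ itself, the corresponding map is the base change of $W\times H\to W\times_ZW\subset W\times W$ along the projection. That map is an isomorphism onto $W\times_ZW$, and $W\times_ZW$ is closed in $W\times W$ because $Z$ is separated. Restricting to the locally closed $H$-stable subspace $J^{-1}(Y)$ then preserves closedness inside $J^{-1}(Y)\times J^{-1}(Y)$.

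The remaining claims are then direct. Smoothness when $Y$ is smooth follows from the charts. For the final claim, the map $J^{-1}(Y)/H\to T$ is locally the composition $T^*V\times Y\to Y\xrightarrow{f}T$, which is smooth. Invariance of $f$ ensures that these local maps agree on overlaps, since the transitions act on $Y$ by the coadjoint action.
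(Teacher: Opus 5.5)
Your argument is the paper's proof: trivialize $W$ over an \'etale chart, use Maurer--Cartan coordinates so that $J^{-1}(Y)|_V\cong T^*V\times H\times Y$ with $H$ acting only on the middle factor, and glue these charts by \'etale descent. The paper computes the transitions explicitly: for $s'=sa$ they are $\zeta'=\Ad^*_{a^{-1}}\zeta$ and $\alpha'=\alpha+\langle\zeta,(da)a^{-1}\rangle$. You instead derive the cocycle condition from the fact that the charts come from a globally defined quotient, which is also fine.

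The one step that fails as written is your separatedness argument. The map $T^*W\times H\to T^*W\times T^*W$, $(p,g)\mapsto(p,pg)$, is \emph{not} the base change of $W\times H\to W\times W$ along the projection. That fiber product consists of triples $(p_1,p_2,g)$ in which $p_2$ is an arbitrary covector at $\pi(p_1)g$, not necessarily $p_1g$, so closedness does not follow from what you wrote.

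The conclusion is still true. Compose with $\mathrm{id}\times\pi\colon T^*W\times T^*W\to T^*W\times W$ to get $(p,g)\mapsto(p,\pi(p)g)$. This composite genuinely is the base change of the closed immersion $W\times H\to W\times W$ along $\pi\times\mathrm{id}$. Since $\mathrm{id}\times\pi$ is affine, hence separated, the original map is a closed immersion.

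The paper's route is simpler, and it shows this was never the main obstacle. Separatedness and finite type of $J^{-1}(Y)/H\to Z$ can be checked \'etale locally on $Z$, where they are evident from the charts $T^*V\times Y\to V$. Composing with the separated finite-type morphism $Z\to\Spec\C$ then gives the absolute statements.
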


\begin{proof}
A section $s:V\to W$ gives coordinates $w=s(v)h$ and an identification $W|_V=V\times H$. The form $h^{-1}dh$ sends the tangent vector $hA$ to $A\in\mathfrak h$, so a cotangent vector is
\[
 \alpha+\langle\xi,h^{-1}dh\rangle,
 \qquad\alpha\in T^*V,\quad h\in H,\quad\xi\in\mathfrak h^*.
\]
The infinitesimal right action is $hA$, hence $J(\alpha,h,\xi)=\xi$ and
\[
 (\alpha,h,\xi)\cdot k
 =\bigl(\alpha,hk,\Ad^*_{k^{-1}}\xi\bigr).
\]
Consequently, for coadjoint-stable $Y$, $J^{-1}(Y)=T^*V\times H\times Y$. Each orbit has the unique representative $(\alpha,1,\zeta)$ with $\zeta=\Ad_h^*\xi$, giving the quotient $T^*V\times Y$ and a trivial $H$-torsor over it.

Using the coordinate $\zeta=\Ad_h^*\xi$, the covector is $\alpha+\langle\zeta,dh\,h^{-1}\rangle$. For another section $s'=sa$, with $a:V\to H$, the same point has $h=ah'$ and
\[
 dh\,h^{-1}=(da)a^{-1}+\Ad_a(dh'\,{h'}^{-1})
\]
hence the transitions
\[
 \zeta'=\Ad_{a^{-1}}^*\zeta,
 \qquad
 \alpha'=\alpha+\langle\zeta,(da)a^{-1}\rangle.
\]
Thus the algebraic transitions preserve $Y$ and satisfy the cocycle condition. Effective \'{e}tale descent glues the quotients and torsor maps over $Z$. The charts are separated and of finite type over $V$, so these properties descend to the quotient over $Z$. Since $Z$ is separated and of finite type over $\C$, so is the quotient. Smoothness follows from the product description, as does smoothness over $T$ when $f$ is smooth.
\end{proof}

\vspace{5mm}

\textbf{Acknowledgements}.
Georgios Kydonakis is very grateful to the Kavli Institute for the Physics and Mathematics of the Universe at the University of Tokyo for its support and hospitality, where this work was partially carried out. Lutian Zhao was supported by JSPS KAKENHI Grant Number JP25K17226.
\vspace{2mm}

\textbf{AI Acknowledgement}.
The tool ChatGPT was used to assist with drafting and polishing the manuscript. The authors take full responsibility for the mathematical content and the final text.

\end{document}